\newtheorem{theoremcounter}{Theorem Counter}[section]
\theoremstyle{definition}
\newtheorem{remark}[theoremcounter]{Remark}
\newtheorem{example}[theoremcounter]{Example}
\theoremstyle{plain}
\newtheorem{lemma}[theoremcounter]{Lemma}
\newtheorem{proposition}[theoremcounter]{Proposition}
\newtheorem{corollary}[theoremcounter]{Corollary}
\newtheorem{theorem}[theoremcounter]{Theorem}
\numberwithin{equation}{section}
\newcommand{\N}{\mathbb{N}}
\newcommand{\Z}{\mathbb{Z}}
\newcommand{\Q}{\mathbb{Q}}
\newcommand{\R}{\mathbb{R}}
\newcommand{\C}{\mathbb{C}}
\renewcommand{\H}{\mathbb{H}}
\newcommand{\bbP}{\mathbb{P}}
\newcommand{\Gr}{\mathfrak{D}}
\newcommand{\e}{\mathfrak{e}}
\newcommand{\calF}{\mathcal{F}}
\newcommand{\calG}{\mathcal{G}}
\newcommand{\calK}{\mathcal{K}}
\newcommand{\calQ}{\mathcal{Q}}
\renewcommand{\epsilon}{\varepsilon}
\newcommand{\eps}{\epsilon}
\newcommand{\vphi}{\varphi}
\DeclareMathOperator{\ImNew}{Im}
\renewcommand{\Im}{\ImNew}
\DeclareMathOperator{\ReNew}{Re}
\renewcommand{\Re}{\ReNew}
\DeclareMathOperator{\modNew}{mod}
\renewcommand{\mod}{\modNew}
\DeclareMathOperator{\SL}{SL}
\DeclareMathOperator{\Mp}{Mp}
\DeclareMathOperator{\sign}{sign}
\DeclareMathOperator{\tr}{tr}
\DeclareMathOperator{\reg}{reg}
\DeclareMathOperator{\ord}{ord}
\DeclareMathOperator{\CT}{CT}
\DeclareMathOperator{\Iso}{Iso}
\DeclareMathOperator{\Mat}{Mat}
\DeclareMathOperator{\FT}{FT}
\DeclareMathOperator{\hyperbolic}{hyp}
\DeclareMathOperator{\elliptic}{ell}
\DeclareMathOperator{\parabolic}{par}
\newcommand{\dhyp}{d_{\hyperbolic}}
\newcommand{\Eell}{E^{\elliptic}}
\newcommand{\Ehyp}{E^{\hyperbolic}}
\newcommand{\Epar}{E^{\parabolic}}
\newcommand{\pmat}[1]{\begin{pmatrix}#1\end{pmatrix}}
\newcommand{\smat}[1]{\bigl(\begin{smallmatrix}#1\end{smallmatrix}\bigr)}
\newcommand{\sprod}[1]{\left\langle#1\right\rangle}
\newcommand{\ssprod}[1]{\langle#1\rangle}
\newcommand{\abs}[1]{\left\lvert#1\right\rvert}
\newcommand{\snorm}[1]{\lVert#1\rVert}
\newcommand{\HypF}[2]{{}_{#1}F_{#2}}
\newcommand{\weightkaction}[1]{\,\Big\vert_{#1}\,}
\newcommand{\sweightkaction}[1]{\,\big\vert_{#1}\,}
\newcommand{\thmref}[2]{\hyperref[#2]{#1 \ref*{#2}}}
\renewcommand{\eqref}[1]{\hyperref[#1]{(\ref*{#1})}}
\renewcommand{\emph}[1]{\textbf{#1}}
\begin{document}

\title{Kronecker limit formulas for parabolic, hyperbolic and elliptic Eisenstein series via Borcherds products}
\author{Anna-Maria von Pippich}
\address{\rm Fachbereich Mathematik, Technische Universit\"at Darmstadt, Schlo{\upshape{\ss}}gartenstr. 7, 64289 Darmstadt, Germany}
\email{pippich@mathematik.tu-darmstadt.de}
\author{Markus Schwagenscheidt}
\address{\rm Fachbereich Mathematik, Technische Universit\"at Darmstadt, Schlo{\upshape{\ss}}gartenstr. 7, 64289 Darmstadt, Germany}
\email{schwagenscheidt@mathematik.tu-darmstadt.de}
\author{Fabian Völz}
\address{\rm Fachbereich Mathematik, Technische Universit\"at Darmstadt, Schlo{\upshape{\ss}}gartenstr. 7, 64289 Darmstadt, Germany }
\email{voelz@mathematik.tu-darmstadt.de}
\date{}

\maketitle

\begin{abstract}
	The classical Kronecker limit formula describes the constant term in the Laurent expansion at the first order pole of the non-holomorphic Eisenstein series associated to the cusp at infinity of the modular group. Recently, the meromorphic continuation and Kronecker limit type formulas were investigated for non-holomorphic Eisenstein series associated to hyperbolic and elliptic elements of a Fuchsian group of the first kind by Jorgenson, Kramer and the first named author. In the present work, we realize averaged versions of all three types of Eisenstein series for $\Gamma_0(N)$ as regularized theta lifts of a single type of Poincar\'e series, due to Selberg. Using this realization and properties of the Poincar\'e series we derive the meromorphic continuation and Kronecker limit formulas for the above Eisenstein series. The corresponding Kronecker limit functions are then given by the logarithm of the absolute value of the Borcherds product associated to a special value of the underlying Poincar\'e series.
\end{abstract}


\section{Introduction}

Let $N$ be a positive integer. The classical non-holomorphic Eisenstein series associated to a cusp $p$ of $\Gamma_{0}(N)$ is defined by
	\begin{align*}
		\Epar_p(z,s) = \sum_{M \in \Gamma_0(N)_p \backslash \Gamma_0(N)} \Im(\sigma_p^{-1}Mz)^s
	\end{align*}
	for $z \in \H$ and $s \in \C$ with $\Re(s)>1$. Here $\sigma_p \in \SL_2(\R)$ is a scaling matrix for the cusp $p$, that is $\sigma_p \smat{1&1\\0&1} \sigma_p^{-1}$ generates the stabilizer $\Gamma_0(N)_p / \{\pm1\}$ of the cusp $p$ in $\Gamma_{0}(N) / \{\pm1\}$. The Eisenstein series has a meromorphic continuation to the whole complex plane with a simple pole at $s = 1$, and the Kronecker limit formula describes the constant term in the Laurent expansion at this point. For example, for $N = 1$ it has the well-known form
\begin{align}\label{eq parabolic KLF}
\Epar_\infty(z,s) = \frac{3/\pi}{s-1} - \frac{1}{2\pi}\log\left(|\Delta(z)|\Im(z)^{6}\right) + C + O(s-1) \qquad \text{as $s \to 1$} ,
\end{align}
where $C = (6-72\zeta'(-1)-6\log(4\pi))/\pi$.
	We call $\Epar_p(z,s)$ a parabolic Eisenstein series in order to distinguish it from the following two analogs: 
	
	Given a geodesic $c$ in $\H$, i.e., a vertical line or a semi-circle centered at the real line, we define the hyperbolic Eisenstein series associated to $c$ via
	\begin{align*}
		\Ehyp_c(z,s) = \sum_{M \in \Gamma_0(N)_c \backslash \Gamma_0(N)} \cosh(\dhyp(Mz,c))^{-s}
	\end{align*}
	for $z \in \H$ and $s \in \C$ with $\Re(s)>1$. Here $\Gamma_0(N)_c$ denotes the stabilizer of the oriented geodesic $c$ in $\Gamma_0(N)$, and $\dhyp(z,c)$ denotes the hyperbolic distance from the point $z$ to $c$. They are scalar valued analogs of the form valued hyperbolic Eisenstein series introduced by Kudla and Millson in their work \cite{KudlaMillson1979}. The analytic continuation of $\Ehyp_c(z,s)$ for geodesics corresponding to hyperbolic elements of an arbitrary Fuchsian group of the first kind has been established by Jorgenson, Kramer and the first named author in \cite{JorgensonKramerPippich2009}. We note that we do not assume that the geodesic $c$ comes from some hyperbolic element of the underlying group $\Gamma_0(N)$, that is, the stabilizer $\Gamma_0(N)_c$ is allowed to be trivial, which is equivalent to saying that the image of $c$ in the modular curve $\Gamma_0(N) \backslash \H$ is an infinite geodesic. 
	
	For a point $w \in \H$, we define the elliptic Eisenstein series associated to $w$ via
	\begin{align*}
		\Eell_w(z,s) = \sum_{M \in \Gamma_0(N)_w \backslash \Gamma_0(N)} \sinh(\dhyp(Mz,w))^{-s}
	\end{align*}
	for $z \in \H$ not lying in the $\Gamma_{0}(N)$-orbit of $w$, and $s \in \C$ with $\Re(s)>1$. These series were introduced by Jorgenson and Kramer in their unpublished work \cite{KramerJorgenson2004} (see also \cite{KramerJorgenson2011}), and they have been investigated in detail for an arbitrary Fuchsian group of the first kind
	by the first named author in \cite{Pippich2010} and \cite{Pippich2016}. It was shown in \cite{Pippich2016} that the elliptic Eisenstein series has a meromorphic continuation to $\C$ and admits a Kronecker limit formula at $s = 0$, which for $N = 1$ takes the beautiful form
\[
\Eell_w(z,s) = -\log \left( \abs{j(z)-j(w)}^{2/|\Gamma_0(N)_w|} \right)\cdot s + O(s^{2}).
\]
Here $j(z)$ is the usual modular $j$-function. Further explicit examples are computed in \cite{JorgensonPippichSmajlovic2015}.

	The main goal of the present work is to realize (averaged versions of) all three types of Eisenstein series for 
	$\Gamma_0(N)$ as regularized theta lifts of certain non-holomorphic vector valued Poincar\'e series of weight $1/2$, and to use this representation to systematically derive explicit Kronecker limit type formulas for these averaged hyperbolic and elliptic Eisenstein series. For completeness, we also re-prove the classical Kronecker limit formula for the parabolic Eisenstein series using our theta lift approach. Let us describe our results in some more detail.
	
	Throughout this work, we assume that $N$ is squarefree. Although the results of this work certainly hold for general positive integers $N$ (with some minor modifications), this assumption greatly simplifies the exposition and allows us to make our results very explicit. 

	For $\beta \in \Z/2N\Z$ and a discriminant $D \in \Z$ with $D \equiv \beta^{2} \mod 4N$ we consider the non-holomorphic vector valued Poincar\'e series
\begin{align*}
	P_{\beta,D}(\tau,s)
	= \frac{1}{2} \sum_{(M,\phi) \in \ssprod{(T,1)} \backslash \Mp_2(\Z)} v^s e(D\tau/4N) \e_\beta \weightkaction{1/2,\rho} (M,\phi), \quad (\tau = u +iv \in \H),
\end{align*}
where we refer to Section \ref{section preliminaries} for the notation. Note that this definition slightly differs from the one given in \eqref{definition Pmb}, namely by the relation $D=4Nm$. The function $P_{\beta,D}(\tau,s)$ is a vector valued version of Selberg's Poincar\'e series introduced in \cite{Selberg1965}. It transforms like a vector valued modular form of weight $1/2$ for the Weil representation $\rho$ associated to a certain even lattice $L$ of signature $(2,1)$ and level $4N$, and it satisfies the differential equation
\begin{align*}
	\Delta_{1/2} P_{\beta,D}(\tau,s) = s \left(\frac{1}{2}-s\right) P_{\beta,D}(\tau,s) + s\frac{\pi D}{N} \, P_{\beta,D}(\tau,s+1),
\end{align*}
where $\Delta_{1/2}$ denotes the weight $1/2$ hyperbolic Laplace operator, see \eqref{LaplaceOperator}. For $D = 0$, the Poincar\'e series is a non-holomorphic parabolic Eisenstein series of weight $1/2$, whose analytic properties are well understood by the fundamental work of Selberg and Roelcke (see \cite{SelbergHarmonicAnalysis} and \cite{Roelcke1966,Roelcke1967}). In particular, it has a meromorphic continuation in $s$ to all of $\C$. For $D>0$, using the spectral theory of automorphic forms, Selberg proved in \cite{Selberg1965} that $P_{\beta,D}(\tau,s)$ has a meromorphic continuation to $\C$, with poles corresponding to the eigenvalues of $\Delta_{1/2}$. By computing the Fourier expansion of $P_{\beta,D}(\tau,s)$ and employing the estimates of Goldfeld and Sarnak (see \cite{GoldfeldSarnak1983}) and Pribitkin (see \cite{Pribitkin2000}) for the Kloosterman zeta functions appearing in this expansion, one obtains the meromorphic continuation of $P_{\beta,D}(\tau,s)$ also for $D < 0$. Carefully translating these classical results to our vector valued setting in Section \ref{section Poincare series}, we can evaluate the Fourier expansion of the Poincar\'e series at $s=0$, proving the following result (compare Theorem \ref{theorem MC of P via FE} and \ref{theorem P_beta,m(tau)}).

\begin{theorem} For each $\beta \in \Z/2N\Z$ and $D \in \Z$ with $D \equiv \beta^{2} \mod 4N$, the Poincar\'e series $P_{\beta,D}(\tau,s)$ has a meromorphic continuation in $s$ to $\C$ which is holomorphic at $s = 0$, yielding a harmonic Maass form $P_{\beta,D}(\tau,0)$ of weight $1/2$.
\end{theorem}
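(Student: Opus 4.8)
The plan is to treat the three ranges $D>0$, $D=0$, and $D<0$ separately for the meromorphic continuation, in each case reducing to the classical analytic theory of Selberg's scalar Poincaré series adapted to the $\rho$-valued setting, and then to extract the two genuinely new assertions — holomorphy at $s=0$ and the harmonic Maass form property of the special value $P_{\beta,D}(\tau,0)$ — from the explicit Fourier expansion together with the differential equation $\Delta_{1/2}P_{\beta,D}(\tau,s)=s(1/2-s)P_{\beta,D}(\tau,s)+s\tfrac{\pi D}{N}P_{\beta,D}(\tau,s+1)$.

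First I would establish the meromorphic continuation. For $D=0$ the series $P_{\beta,0}(\tau,s)$ is a weight $1/2$ parabolic Eisenstein series, and its continuation is the classical Roelcke--Selberg result, which transfers componentwise to the vector valued situation. For $D>0$ I would follow Selberg's spectral method: realizing $P_{\beta,D}(\tau,s)$ through the resolvent of $\Delta_{1/2}$ at the eigenvalue parameter $\lambda(s)=s(1/2-s)$, the spectral expansion yields a meromorphic continuation whose poles sit precisely at those $s$ for which $\lambda(s)$ lies in the spectrum. For $D<0$ the seed function grows, so the spectral picture is unavailable; instead I would compute the Fourier expansion explicitly, observing that each Fourier coefficient factors as an archimedean Whittaker-type integral, entire in $s$, times a Kloosterman zeta function whose meromorphic continuation is supplied by the estimates of Goldfeld--Sarnak and Pribitkin. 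Assembling the coefficients then gives the continuation in this range.

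Second, for holomorphy at $s=0$ I would work from the same Fourier expansion recorded in Theorem~\ref{theorem MC of P via FE}, verifying that neither the constant term nor any higher coefficient has a pole at $s=0$. The archimedean factors are manifestly finite there, so the burden falls on the Kloosterman zeta functions and on the normalizing Gamma factors: one invokes the Goldfeld--Sarnak/Pribitkin continuation to locate the poles of the former and checks that $s=0$ is avoided, any apparent pole of a Gamma factor being cancelled by a compensating zero. This is the main obstacle, because the value $\lambda(0)=0$ sits at the bottom of the weight $1/2$ spectrum, where the holomorphic unary theta series reside; one must therefore argue that the potential spectral pole at $s=0$ does not in fact materialize, e.g.\ that the relevant residue vanishes.

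Finally, harmonicity. Since the defining series converges and transforms with weight $1/2$ for $\rho$ when $\Re(s)>1$, both the modular transformation and the differential equation persist throughout the domain of continuation. Once holomorphy at $s=0$ is secured, both terms on the right-hand side carry the prefactor $s$ and hence vanish in the limit $s\to 0$; here I use that $P_{\beta,D}(\tau,s+1)$ is finite at $s=0$, which follows from the absence of a pole at $s=1$ — indeed $\lambda(1)=-1/2$ is not an eigenvalue of $\Delta_{1/2}$, and for $D<0$ the point $s=1$ lies in the region of absolute convergence of the Kloosterman zeta functions. Consequently $\Delta_{1/2}P_{\beta,D}(\tau,0)=0$, and together with the weight $1/2$ transformation under $\rho$ and the at-most-linear-exponential growth read off from the Fourier expansion, this exhibits $P_{\beta,D}(\tau,0)$ as a harmonic Maass form of weight $1/2$, completing the proof.
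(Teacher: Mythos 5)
Your overall architecture coincides with the paper's: spectral theory for $D>0$, the classical Eisenstein-series analysis for $D=0$, the Fourier expansion with continued Kloosterman zeta functions for $D<0$, and harmonicity of the special value via the differential equation evaluated at $s=0$ (your treatment of that last step, including the point $s=1$, is fine and matches Theorem \ref{theorem P_beta,m(tau)}). The genuine gap sits at the crux, namely holomorphy at $s=0$ when $D<0$. You assert that one ``checks that $s=0$ is avoided'' by the poles of the Kloosterman zeta functions, the archimedean factors being ``manifestly finite there''. This is the opposite of what actually happens. Continuing $Z(s;\beta,m,\gamma,n)$ for $m,n<0$ forces one through the dual weight $3/2$ Poincar\'e series (Lemma \ref{lemma MC of Z}), and there $Z(1/4+s;\beta,m,\gamma,n)$ acquires, in general, a \emph{genuine} simple pole at $s=0$, with residue governed by holomorphic cusp forms in $S_{3/2,\bar\rho}$; such forms exist for general squarefree $N$, so this pole cannot be ruled out. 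Holomorphy of the coefficients $b(\gamma,n;v,s)$ with $n<0$ at $s=0$ holds only because the archimedean prefactor $1/\Gamma(s)$ has a zero there which cancels that pole (Lemma \ref{lemma P(tau,0)}); the same zero is what forces the non-holomorphic part to vanish when $m\geq 0$. Your proposal never identifies this compensation --- indeed the phrase ``any apparent pole of a Gamma factor being cancelled by a compensating zero'' points the wrong way, since no Gamma factor in the expansion of Proposition \ref{proposition FE of P} has a pole at $s=0$ --- so the key step is not established.

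Two related under-specifications. First, for $D<0$ the continuation of these vector valued weight $1/2$ Kloosterman zeta functions is not simply ``supplied by'' Goldfeld--Sarnak and Pribitkin: those works are scalar valued, and Pribitkin assumes the absence of continuous spectrum, which is present here. The paper instead derives the continuation from the already-proved spectral continuation of the $m>0$ and dual weight $3/2$ Poincar\'e series via Selberg's unfolding identity \eqref{eq (Pm,Pn)}, and uses Pribitkin-type arguments only for the polynomial growth in $n$ (Lemma \ref{lemma growth of Z}) needed to sum the Fourier series; it is exactly this derivation that reveals where the dangerous pole at $1/4$ comes from. Second, for $D>0$ you correctly flag that $\lambda(0)=0$ lies in the discrete spectrum (the unary theta functions) but leave the resolution at ``one must argue the residue vanishes''. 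The resolution is concrete: for eigenvalue $0$ one has $t_j=i/4$, so the spectral coefficient is proportional to $\Gamma(s-1/2)\Gamma(s)/\Gamma(s)=\Gamma(s-1/2)$, which is finite at $s=0$ --- once more the $1/\Gamma(s)$ normalization --- while the continuous part is controlled by holomorphy of $E_0(\tau,s)$ at $s=0$, which is precisely why the paper must first prove Proposition \ref{theorem MC of P} by the explicit divisor-sum computation rather than by a bare componentwise appeal to Roelcke--Selberg.
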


	Next, we consider Borcherds' regularized theta lift (see \cite{Borcherds}) of $P_{\beta,D}(\tau,s)$, namely
	\[
		\Phi(z,P_{\beta,D}(\,\cdot\,,s)) = \int_{\SL_{2}(\Z)\setminus \H}^{\reg}\langle P_{\beta,D}(\tau,s),\Theta(\tau,z) \rangle v^{1/2}   \frac{du \, dv}{v^{2}} , \quad (\tau = u + iv \in \H),
	\]
	where $\Theta(\tau,z)$ is the Siegel theta function associated to the lattice $L$, which transforms like a vector valued modular form of weight $1/2$ for the Weil representation $\rho$ in $\tau$, and is $\Gamma_{0}(N)$-invariant in $z$. Note that the above integral has to be regularized as explained in Section \ref{section borcherds products}.

	For $\beta \in \Z/2N\Z$ and $D \in \Z$ with $D \equiv \beta^{2} \mod 4N$ we let $\calQ_{\beta,D}$ be the set of integral binary quadratic forms $ax^{2}+bxy + cy^{2}$ of discriminant $D = b^{2}-4ac$ with $N \mid a$ and $b \equiv \beta \mod 2N$. The group $\Gamma_{0}(N)$ acts on $\calQ_{\beta,D}$ from the right by $Q.M = M^{t}QM$, with finitely many orbits if $D \neq 0$. Let $Q \in \calQ_{\beta,D}$ with $Q(x,y) = ax^{2}+bxy + cy^{2}$. If $D< 0$, then the order of the stabilizer of $Q$ in $\Gamma_0(N)$ is finite, and there is an associated Heegner (or CM) point
	\[
	z_{Q} = -\frac{b}{2a} + i \frac{\sqrt{|D|}}{2|a|} ,
	\]
	which is characterized by $Q(z_{Q},1) = 0$. If $D > 0$, then the stabilizer of $Q$ in $\Gamma_0(N)/\{\pm1\}$ is trivial if $D$ is a square, and infinite cyclic otherwise, and there is an associated geodesic in $\H$ given by
	\[
	c_{Q} = \{z \in \H: a|z|^{2} + bx + c = 0\} .
	\]
	In both cases the actions of $\Gamma_0(N)$ on $\calQ_{\beta,D}$ and $\H$ are compatible in the sense that $M.z_Q = z_{Q.M}$ and $M.c_Q = c_{Q.M}$ hold for all $M \in \Gamma_0(N)$. For $D < 0$ we let $H_{\beta,D}$ be the set of all Heegner points $z_{Q}$ with $Q \in \calQ_{\beta,D}$, and for $D \geq 0$ we let $H_{\beta,D} = \emptyset$. With the above notation we can now state one of our main results, which proven in {Section \ref{section lifts}}.
	
	\begin{theorem} \label{theorem lift introduction}
	For $s \in \C$ with $\Re(s) > 1$ the regularized theta lift $\Phi(z,P_{\beta,D}(\,\cdot\,,s))$ defines a real analytic function in $z \in \H \setminus H_{\beta,D}$ and a holomorphic function in $s$, which can be meromorphically continued to all $s \in \C$. It is holomorphic at $s =0$ if $D \neq 0$, and has a simple pole at $s = 0$ if $D = 0$. Further, we have
	\[
		\Phi(z,P_{\beta,D}(\,\cdot\,,s)) = \begin{dcases}
			\frac{2 \Gamma(s)}{(\pi D/N)^s} \sum_{Q \in \calQ_{\beta,D} / \Gamma_0(N)} \Ehyp_{c_Q}(z,2s) , & \text{if $D>0$} , \\
			4 N^s \zeta^*(2s) \sum_{p \in C(\Gamma_0(N))} \Epar_{p}(z,2s) , & \text{if $D=0$} , \\
			\frac{2 \Gamma(s)}{(\pi|D|/N)^s} \sum_{Q \in \calQ_{\beta,D} / \Gamma_0(N)} \Eell_{z_Q}(z,2s) , & \text{if $D<0$} ,
		\end{dcases}
	\]
	for $z \in \H \setminus H_{\beta,D}$ and $s \in \C$ with $\Re(s) > 1$. Here, $\zeta^*(s) = \pi^{-s/2} \Gamma(s/2) \zeta(s)$ is the completed Riemann zeta function.
\end{theorem}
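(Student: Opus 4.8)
The plan is to evaluate the lift by the standard unfolding of a regularized theta integral against a Poincaré series, to carry out the resulting $u$- and $v$-integrals explicitly, and to give a geometric interpretation of the lattice vectors that survive. First I would unfold. Since $P_{\beta,D}(\tau,s)$ is an average of the seed $v^s e(D\tau/4N)\e_\beta$ over $\ssprod{(T,1)}\backslash\Mp_2(\Z)$, and since $\Theta(\tau,z)$ transforms with weight $1/2$ for $\rho$ in $\tau$, the integrand $\sprod{P_{\beta,D}(\tau,s),\Theta(\tau,z)}v^{1/2}$ is $\Mp_2(\Z)$-invariant in $\tau$. Hence, for $\Re(s)$ large, the regularized integral over $\SL_2(\Z)\backslash\H$ unfolds to an integral of the single seed over the strip $\ssprod{T}\backslash\H = \{\tau = u+iv : 0 \le u \le 1,\ v > 0\}$, namely
\[
\Phi(z,P_{\beta,D}(\,\cdot\,,s)) = \frac{1}{2}\int_0^\infty\int_0^1 v^s\, e(D\tau/4N)\,\theta_\beta(\tau,z)\,v^{1/2}\,\frac{du\,dv}{v^2},
\]
where $\theta_\beta$ is the $\e_\beta$-component of $\Theta$ (up to the sign convention in the pairing). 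One must check that the regularization is compatible with this step and that the strip integral converges for $\Re(s)>1$; the decay of $e(D\tau/4N)$ as $v\to\infty$ for $D\ge 0$ and the behaviour of $\Theta$ near the cusps should settle this.

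Second, I would insert the explicit lattice expansion of $\theta_\beta$. Writing each term as $e(Q(\lambda_{z^+})\tau + Q(\lambda_{z^-})\overline\tau)$ times a power of $v$, where $\lambda_{z^+},\lambda_{z^-}$ denote the projections of $\lambda\in L+\beta$ onto the positive and negative definite subspaces determined by $z$, the inner integral $\int_0^1 e(Du/4N)e(-Q(\lambda)u)\,du$ acts as a frequency filter, selecting precisely the $\lambda$ of norm $Q(\lambda)=D/4N$. Under the identification of $L+\beta$ with integral binary quadratic forms, these are exactly the elements of $\calQ_{\beta,D}$, so the double integral becomes a sum over $Q\in\calQ_{\beta,D}$. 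Using $Q(\lambda_{z^+})+Q(\lambda_{z^-})=Q(\lambda)=D/4N$, the total decay in $v$ collapses to $e^{-4\pi Q(\lambda_{z^+})v}$, and the remaining $v$-integral is the Gamma integral $\int_0^\infty v^{s-1}e^{-4\pi Q(\lambda_{z^+})v}\,dv = \Gamma(s)\,(4\pi Q(\lambda_{z^+}))^{-s}$.

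The crucial step is then to express $Q(\lambda_{z^+})$ through the invariants in the Eisenstein series. For $D>0$ one has $4\pi Q(\lambda_{z^+}) = (\pi D/N)\cosh(\dhyp(z,c_Q))^2$, so each term contributes $(\pi D/N)^{-s}\cosh(\dhyp(z,c_Q))^{-2s}$; for $D<0$ the analogous identity with $\sinh(\dhyp(z,z_Q))^2$ holds, the Heegner points $z_Q$ producing the singular set $H_{\beta,D}$. Summing over $Q\in\calQ_{\beta,D}$ and regrouping by $\Gamma_0(N)$-orbits, the compatibilities $M.z_Q = z_{Q.M}$, $M.c_Q = c_{Q.M}$ together with the isometry invariance of $\dhyp$ unwind each orbit sum into a full hyperbolic resp.\ elliptic Eisenstein series, giving the stated formulas with prefactor $2\Gamma(s)(\pi|D|/N)^{-s}$. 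The case $D=0$ is treated separately: the seed is $v^s\e_\beta$, the surviving $\lambda$ are the isotropic vectors, and organizing them by the cusps of $\Gamma_0(N)$ and by primitivity yields $\sum_p\Epar_p(z,2s)$ together with a Dirichlet series $\sum_{n\ge 1}n^{-2s}$, which after completion accounts for the factor $4N^s\zeta^*(2s)$.

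Finally, for the meromorphic continuation and the behaviour at $s=0$, I would combine the identity, valid for $\Re(s)>1$, with the analytic properties of the objects on the right: the continuation follows either from that of the Poincaré series or from the known continuations of the three Eisenstein series and of $\zeta^*$. For $D\ne 0$ the pole of $\Gamma(s)$ at $s=0$ is cancelled by the vanishing of the hyperbolic resp.\ elliptic Eisenstein series at argument $2s=0$, so $\Phi$ is holomorphic there; for $D=0$ the factor $\zeta^*(2s)=\pi^{-s}\Gamma(s)\zeta(2s)$ contributes a simple pole at $s=0$. I expect the main obstacle to be twofold: carrying out the geometric identification of $Q(\lambda_{z^+})$ with $\cosh^2$ resp.\ $\sinh^2$ of the hyperbolic distance while tracking every normalizing constant, and the careful handling of the $D=0$ case, where the isotropic contribution must be disentangled into cusps and the zeta factor and where the regularization genuinely enters.
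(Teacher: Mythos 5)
Your computation of the identity is essentially the paper's own proof of Theorem \ref{theorem lift 1}: unfold the regularized integral against the Poincar\'e series, let the $u$-integral filter out the vectors $\lambda \in L+\beta$ with $Q(\lambda)=D/4N$, evaluate the $v$-integral as $\Gamma(s)\,(4\pi Q(\lambda_{z^+}))^{-s}$, and convert $Q(\lambda_{z^+})$ into $\cosh^2(\dhyp(z,c_Q))$, $\sinh^2(\dhyp(z,z_Q))$, resp.\ $\Im(\sigma_{p}^{-1}z)^{-2}$ (this is Lemma \ref{lemma Q(X_zperp)}) before regrouping the lattice sum into $\Gamma_0(N)$-orbits. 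Two bookkeeping slips in that part: the $c=0$ cosets of the Poincar\'e series contribute the seed $v^s e(D\tau/4N)(\e_\beta+\e_{-\beta})$, so after pairing with $\Theta$ the unfolded integral carries a factor $2\,\overline{\Theta_\beta}$, not your $\tfrac12\,\theta_\beta$; and since one only has $\Theta_\beta(\tau,z)=O(v^{-1})$ as $v\to0$, the unfolding is justified for $\Re(s)>3/2$, the stated range then being recovered by analytic continuation of both sides.

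There are, however, two genuine gaps. First, in the case $D=0$ your scheme breaks at the vector $\lambda=0$: its ``Gamma integral'' is $\int_0^\infty v^{s-1}\,dv$, divergent for every $s$, and saying that ``the regularization genuinely enters'' is a flag, not an argument. The paper's actual mechanism is to unfold only the cosets with $M\neq\pm1$ onto the region $\calG$ below the fundamental domain (where no regularization is needed), to keep the $\CT_{t=0}$-regularization with the extra factor $v^{-t}$ for the seed term on $v\geq 1$, and then to check that the two $X=0$ pieces, $2\int_0^1 v^{s-1}\,dv$ and $\CT_{t=0}\,2\int_1^\infty v^{s-1-t}\,dv$, cancel exactly; without this device neither the $D=0$ identity nor the pole assertion at $s=0$ comes out. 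Second, the continuation half of the theorem: meromorphic continuation of $s\mapsto P_{\beta,D}(\tau,s)$ for fixed $\tau$ does \emph{not} by itself continue the regularized integral over the fundamental domain. This is the content of Proposition \ref{proposition theta lift continuation}, which substitutes the full Fourier expansion of the Poincar\'e series (whose coefficients are Kloosterman zeta functions) and uses the uniform growth estimates of Lemmas \ref{lemma MC of Z} and \ref{lemma growth of Z} to continue the integral term by term; this is the bulk of the analytic work and is missing from your plan. Your fallback---citing ``known'' continuations of the three Eisenstein series and their vanishing at $s=0$---is also not fully available: for square $D$ the geodesics $c_Q$ are infinite (trivial stabilizer), a case not covered by the continuation of Jorgenson--Kramer--von Pippich for hyperbolic elements, and within this paper the logic runs in the opposite direction, since the corollary to Theorem \ref{theorem lift 1} \emph{derives} these continuations from the continuation of the lift.
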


%

It is remarkable that averaged versions of all three types of Eisenstein series arise as the theta lift of a single type of Poincar\'e series, being distinguished by the sign of $D$. We note that Matthes in \cite{Matthes1999}, Theorem 1.1, uses a similar Poincar\'e series to realize averaged versions of the hyperbolic kernel function $\sum_{M \in \Gamma_0(N)} \cosh(\dhyp(Mz,w))^{-s}$ as a theta lift. The relation between his and our result is explained by the identities given in \cite{JorgensonPippichSmajlovic}, Proposition 11 and 15.

The realization of individual hyperbolic and elliptic Eisenstein series for $\Gamma_0(N)$ as theta lifts will be presented in the upcoming thesis of the third named author (see \cite{Voelz}). These realizations also provide a conceptual approach
to all three types of Eisenstein series and will be used to define and study generalized hyperbolic and elliptic Eisenstein series on orthogonal groups.

Note that Theorem \ref{theorem lift introduction} also yields a new and unified proof of the meromorphic continuation of all three types of (averaged) Eisenstein series for the group $\Gamma_0(N)$. Since the continuations of the Eisenstein series are well documented in the literature, we do not to focus on this aspect here. Instead, we will employ Borcherds' theory of automorphic products developed in \cite{Borcherds} to establish explicit Kronecker limit type formulas for the averaged Eisenstein series given in Theorem \ref{theorem lift introduction}.

Using the functional equation of $\Epar_\infty(z,s)$ for $N = 1$, the classical Kronecker limit formula \eqref{eq parabolic KLF} is equivalent to the more attractive looking Laurent expansion
\[
\Epar_\infty(z,s) = 1 + \log\left(|\Delta(z)|^{1/6}\Im(z)\right)\cdot s + O(s^{2}) 
\]
at $s = 0$. Here $\Delta(z)$ is the unique normalized cusp form of weight $12$ for $\SL_2(\Z)$. In this article, we  establish Kronecker limit type formulas for the averaged Eisenstein series appearing in Theorem \ref{theorem lift introduction} at $s=0$. The corresponding Laurent expansions at $s=0$ are of the form
\[
a_0 + \calK(z) \cdot s + O(s^{2}) ,
\]
where $a_0 \in \C$ is a constant, and $\calK(z) \colon \H \to \C$ is some $\Gamma_{0}(N)$-invariant function. For brevity, we call $\calK(z)$ a Kronecker limit function.
It is well known that the hyperbolic and elliptic Eisenstein series vanish at $s =0$, yielding $a_0=0$ in these cases. The explicit computation of the Kronecker limit function $\calK(z)$ consists of the following three main steps:

\begin{enumerate}
	\item Firstly, we explicitly determine the functions $P_{\beta,D}(\tau,0)$. They turn out to be of rather different nature for different signs of $D$. For $D < 0$, the Poincar\'e series is in general a properly non-holomorphic harmonic Maass form which is determined by its principal part, for $D = 0$ it is a holomorphic modular form which can explicitly be written as a linear combination of unary theta functions, and for $D > 0$ it is a cusp form which is characterized by the fact that the Petersson inner product with a cusp form $f$ of weight $1/2$ for $\rho$ essentially gives the $(\beta,D)$-th Fourier coefficient of $f$. We refer to Theorem \ref{theorem P_beta,m(tau)} for the details.
	\item Next, we show that the functions 
	\[
	\Phi(z,P_{\beta,D}(\,\cdot\,,s))|_{s = 0} \quad \text{and} \quad \Phi(z,P_{\beta,D}(\,\cdot\,,0))
	\]
	essentially agree (some care is necessary for $D = 0$), see Proposition \ref{proposition theta lift continuation}. Since $\Gamma(s)$ and $\zeta^{*}(2s)$ have a simple pole at $s = 0$, Theorem \ref{theorem lift introduction} then implies that the Kronecker limit function $\calK(z)$ is basically given by the theta lift $\Phi(z,P_{\beta,D}(\,\cdot\,,0))$.
	\item By the theory of automorphic products developed in \cite{Borcherds} and \cite{BruinierOno}, it is known that the theta lift $\Phi(z,P_{\beta,D}(\,\cdot\,,0))$ is essentially given by the logarithm of the absolute value of the Borcherds product associated to $P_{\beta,D}(\tau,0)$. Our explicit description of the functions $P_{\beta,D}(\tau,0)$ enables us to determine the required Borcherds products, which in turn gives the Kronecker limit functions $\calK(z)$.
\end{enumerate}
	
	In the following, we present the three Kronecker limit type formulas we obtained via the above process, depending on the sign of $D$. However, since the Kronecker limit functions for the averaged elliptic and hyperbolic Eisenstein series look quite technical for arbitrary squarefree integers $N$, we only state simplified versions in these cases in the introduction, restricting to special values of $N$. For the general theorems we refer 
	to Section \ref{section Kronecker limit functions}.
	
	For $D=0$ the parabolic Kronecker limit function generalizes the classical Kronecker limit formula for $N=1$ seen above:
	
	\begin{theorem}
		At $s=0$ we have the Laurent expansion
		\begin{align*}
			\sum_{p \in C(\Gamma_{0}(N))} \Epar_p(z,s) 
			= 1+\frac{1}{\sigma_{0}(N)}\sum_{c \mid N}\log\left(\left| \Delta(cz)\right|^{1/6}\Im(z)\right)\cdot s + O(s^{2}) .
		\end{align*}
	\end{theorem}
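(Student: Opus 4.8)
The plan is to extract the parabolic Kronecker limit formula directly from the $D = 0$ case of Theorem \ref{theorem lift introduction}, following the three-step program described above. The starting point is the identity
\begin{align*}
\Phi(z,P_{\beta,0}(\,\cdot\,,s)) = 4N^s \zeta^*(2s) \sum_{p \in C(\Gamma_0(N))} \Epar_p(z,2s) ,
\end{align*}
valid for $\Re(s) > 1$ and meromorphically continued to $\C$. Since the averaged parabolic Eisenstein series is holomorphic at $s = 0$ while $4N^s$ is holomorphic and non-vanishing there, and since both $\Phi(z,P_{\beta,0}(\,\cdot\,,s))$ and $\zeta^*(2s)$ have a simple pole at $s = 0$, I would solve for the Eisenstein sum as the quotient
\begin{align*}
\sum_{p \in C(\Gamma_0(N))} \Epar_p(z,2s) = \frac{\Phi(z,P_{\beta,0}(\,\cdot\,,s))}{4N^s \zeta^*(2s)} ,
\end{align*}
whose Taylor coefficients at $s = 0$ are governed by the Laurent data of the numerator and denominator.

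First I would expand the denominator. Writing $\zeta^*(2s) = \pi^{-s}\Gamma(s)\zeta(2s)$ and inserting the standard expansions of $\Gamma$, $\pi^{-s}$ and $\zeta(2s)$ gives $\zeta^*(2s) = -\tfrac{1}{2s} + c_0 + O(s)$ for an explicit constant $c_0$, whence $4N^s \zeta^*(2s) = -\tfrac{2}{s} + (4c_0 - 2\log N) + O(s)$. Writing the numerator as $\Phi(z,P_{\beta,0}(\,\cdot\,,s)) = r(z)/s + \phi_0(z) + O(s)$ and comparing the pole and constant terms of the defining identity forces
\begin{align*}
\sum_{p \in C(\Gamma_0(N))} \Epar_p(z,0) = -\tfrac{1}{2} r(z) , \qquad \calK(z) = c_0 - \tfrac{1}{2}\log N - \tfrac{1}{4}\phi_0(z) ,
\end{align*}
after the substitution $2s \mapsto s$. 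Thus the entire statement reduces to computing the residue $r(z)$ and constant term $\phi_0(z)$ of the theta lift at $s = 0$.

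These two quantities are exactly what Steps 1--3 of the program provide. By Theorem \ref{theorem P_beta,m(tau)} the form $P_{\beta,0}(\tau,0)$ is holomorphic of weight $1/2$, a linear combination of unary theta functions; in particular it has no principal part, and its constant Fourier coefficient controls the residue, which I expect to be the $z$-independent value $r(z) = -2$, so that the constant term of the expansion equals $1$. By Proposition \ref{proposition theta lift continuation} the constant term $\phi_0(z)$ coincides, up to the additive adjustment required when $D = 0$, with the genuine theta lift $\Phi(z,P_{\beta,0}(\,\cdot\,,0))$. Now Borcherds' theory from \cite{Borcherds} applies: since $P_{\beta,0}(\tau,0)$ has trivial principal part, the associated Borcherds product $\Psi$ has trivial divisor on $\H$ and must be a product of powers of $\Delta(cz)$ with $c \mid N$. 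Matching the Fourier expansions of $P_{\beta,0}(\tau,0)$ at the various cusps should pin down the exponents and yield $\Psi(z) = \prod_{c \mid N}\Delta(cz)^{1/(6\sigma_0(N))}$, a form of weight $2$, so that
\begin{align*}
\Phi(z,P_{\beta,0}(\,\cdot\,,0)) = -4\log\bigl(\abs{\Psi(z)}\Im(z)\bigr) + \text{const} = -\frac{2}{3\sigma_0(N)}\sum_{c\mid N}\log\abs{\Delta(cz)} - 4\log\Im(z) + \text{const} .
\end{align*}

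Substituting this into $\calK(z) = c_0 - \tfrac{1}{2}\log N - \tfrac{1}{4}\phi_0(z)$ then produces
\begin{align*}
\calK(z) = \frac{1}{\sigma_0(N)}\sum_{c\mid N}\log\bigl(\abs{\Delta(cz)}^{1/6}\Im(z)\bigr) ,
\end{align*}
once one checks that the additive constants $c_0$, $\log N$ and the regularization constant cancel. The step I expect to be the main obstacle is the precise determination of the Borcherds product in Step 3: reading off from the explicit unary-theta decomposition of $P_{\beta,0}(\tau,0)$ the correct exponents $1/(6\sigma_0(N))$ and the weight $2$ of $\Psi$ (equivalently the coefficient $-4$ of $\log\Im(z)$), and verifying that all scalar constants --- from the Laurent expansion of $\zeta^*(2s)$, the factor $4N^s$, and Borcherds' normalization --- conspire to cancel so that no spurious additive term survives. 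Everything else is a careful but routine Laurent expansion.
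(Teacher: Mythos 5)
Your proposal is correct and follows essentially the same route as the paper's proof: the identity of Theorem \ref{theorem lift 1} with $m=\beta=0$, the pole/constant-term comparison at $s=0$ via Proposition \ref{proposition theta lift continuation} (3) and the Laurent expansion of $\zeta^{*}(2s)$ (your residue $r(z)=-2$ and the exact cancellation of all additive constants are precisely what happens there), and finally Borcherds' Theorem \ref{theorem Borcherds products} applied to $P_{0,0}=\frac{2}{\sigma_{0}(N)}\sum_{c\mid N}\theta^{w_{c}}$. The only point where you differ is in pinning down the Borcherds product: you sketch a divisor-plus-cusp-orders argument, whereas the paper computes $\Psi(z,\theta^{w_{c}})=\eta(cz)\eta\left(\tfrac{N}{c}z\right)$ directly from the product formula, the coefficients \eqref{eq Fourier coefficients of theta_sigma_c} and the Weyl vector (Lemma \ref{lemma Borcherds products theta}), and then multiplies, arriving at exactly your predicted $\prod_{c\mid N}\eta(cz)^{4/\sigma_{0}(N)}=\prod_{c\mid N}\Delta(cz)^{1/(6\sigma_{0}(N))}$.
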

	
	Here $C(\Gamma_{0}(N))$ denotes the set of cusps of $\Gamma_0(N)$. In fact, this is the Kronecker limit formula of the parabolic Eisenstein series for the generalized Fricke group $\Gamma_{0}^{*}(N)$, which is the extension of $\Gamma_{0}(N)$ by all Atkin-Lehner involutions, compare \cite{JorgensonSmajlovicThen}.
	
	In the hyperbolic case, we will see that the Kronecker limit function for the averaged hyperbolic Eisenstein series vanishes for trivial reasons if $N = 1$ or $N = p$ is a prime, or if $D$ is not a square. Thus it is reasonable to assume that $N$ is the product of at least two different primes, and that $D$ is a square, in order to obtain an interesting statement. In the following theorem we deal with the simplest non-trivial situation.
	
	\begin{theorem}
		Let $\beta=n$ and $D=n^2$ for some positive integer $n$, and let $N=pq$ be the product of two different primes. Then the averaged hyperbolic Eisenstein series admits a Laurent expansion at $s = 0$ of the form
		\[
			\sum_{Q \in \calQ_{n,n^{2}}/\Gamma_{0}(N)} \Ehyp_{c_{Q}}(z,s)
			= \calK(z) \cdot s + O(s^2) ,
		\]
		where the Kronecker limit function $\calK(z)$ is given by
		\[
			\calK(z) = \begin{dcases}
				\frac{6n}{\vphi(N)} \log\abs{ \frac{\eta(z) \eta(Nz)}{\eta(pz) \eta(qz)} } , & \text{if $(n,N)=1$} , \\
				0 , & \text{if $(n,N)>1$} .
			\end{dcases}
		\]
		Here $\varphi(N) = (p-1)(q-1)$ is Euler's totient function and $\eta(z) = e(z/24)\prod_{n \geq 1}(1-e(nz))$ is the Dedekind eta function.
	\end{theorem}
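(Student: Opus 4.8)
The plan is to specialize Theorem~\ref{theorem lift introduction} to $\beta = n$ and $D = n^2 > 0$ and to read off the Kronecker limit function from the Laurent expansion of the resulting identity at $s = 0$. For these parameters that theorem gives
\[
\Phi(z, P_{n,n^2}(\,\cdot\,,s)) = \frac{2\Gamma(s)}{(\pi n^2/N)^s} \sum_{Q \in \calQ_{n,n^2}/\Gamma_0(N)} \Ehyp_{c_Q}(z,2s).
\]
Since the hyperbolic Eisenstein series vanish at $s=0$, I would write $\sum_{Q} \Ehyp_{c_Q}(z,2s) = 2\calK(z)\,s + O(s^2)$, where $\calK(z)$ is precisely the sought Kronecker limit function. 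The simple pole $\Gamma(s) = s^{-1} + O(1)$ then cancels this vanishing, while $(\pi n^2/N)^{-s} = 1 + O(s)$, so that evaluating at $s=0$ yields
\[
\Phi(z, P_{n,n^2}(\,\cdot\,,s))\big|_{s=0} = 4\,\calK(z), \qquad \text{hence} \qquad \calK(z) = \tfrac14\,\Phi(z, P_{n,n^2}(\,\cdot\,,0)),
\]
where in the last step I invoke Proposition~\ref{proposition theta lift continuation} to replace $\Phi(\,\cdot\,,s)|_{s=0}$ by $\Phi(\,\cdot\,,0)$, which is legitimate because $D = n^2 \neq 0$. The problem is thereby reduced to computing the theta lift of the single harmonic Maass form $P_{n,n^2}(\tau,0)$.

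The second step is to make $P_{n,n^2}(\tau,0)$ explicit by appealing to Theorem~\ref{theorem P_beta,m(tau)}. The seed term $v^s\,e(n^2\tau/4N)\,\e_n$ has the strictly positive index $n^2/4N$, so $P_{n,n^2}(\tau,0)$ carries no principal part and is a holomorphic modular form of weight $1/2$ for $\rho$; because $D = n^2$ is a square it is a non-trivial theta-type form rather than a genuine cusp form. At this point the arithmetic of $\gcd(n,N)$ enters. I expect that for $\gcd(n,N) = 1$ the form $P_{n,n^2}(\tau,0)$ has non-vanishing coefficients in the relevant components, whereas for $\gcd(n,N) > 1$ it degenerates (its Borcherds datum becomes trivial), which will account for the two cases in the statement. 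I would read off the precise Fourier coefficients $c(\gamma,m)$ from Theorem~\ref{theorem P_beta,m(tau)}, recording in particular that $c(\gamma,m) = 0$ for all $m < 0$ and that the constant term $c(0,0)$ vanishes.

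With a holomorphic weight-$1/2$ input in hand, the third step is to apply Borcherds' theory of regularized theta lifts from \cite{Borcherds} (in the harmonic form of \cite{BruinierOno}): the lift equals $\Phi(z, P_{n,n^2}(\,\cdot\,,0)) = -4\log\abs{\Psi(z)}$, up to an additive constant, where $\Psi$ is the Borcherds product attached to $P_{n,n^2}(\tau,0)$. As the input has no principal part, $\Psi$ has empty divisor on $\H$ and is therefore holomorphic and non-vanishing there; the vanishing of $c(0,0)$ forces $\Psi$ to have weight $0$. Such a $\Psi$ is an eta quotient whose orders at the four cusps $\infty, 0, 1/p, 1/q$ of $\Gamma_0(pq)$ are prescribed by the Fourier coefficients of $P_{n,n^2}(\tau,0)$ through Borcherds' product expansion. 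Matching these orders against the cusp divisors of $\eta(z), \eta(pz), \eta(qz), \eta(Nz)$ should identify
\[
\Psi(z) = \left(\frac{\eta(z)\,\eta(Nz)}{\eta(pz)\,\eta(qz)}\right)^{-6n/\varphi(N)},
\]
which combined with $\calK(z) = \tfrac14\Phi(z, P_{n,n^2}(\,\cdot\,,0)) = -\log\abs{\Psi(z)}$ yields the asserted formula; for $\gcd(n,N) > 1$ the degenerate datum makes $\Psi$ constant and hence $\calK(z) = 0$.

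The main obstacle will be the explicit determination of $\Psi$ in the third step. Concretely this requires computing the Fourier coefficients of the theta-type form $P_{n,n^2}(\tau,0)$ in every component of the signature $(2,1)$ lattice $L$, translating them via Borcherds' product formula into the vanishing orders of $\Psi$ at the cusps of $\Gamma_0(pq)$, and recognising the resulting divisor as that of the displayed eta quotient. Pinning down the exact exponent $6n/\varphi(N)$ together with the sign demands careful bookkeeping of the normalising constants in the theta lift and of the behaviour of $\eta$ under the Fricke and Atkin--Lehner involutions, and verifying that the additive constant in Borcherds' formula vanishes (so that no spurious term survives in the weight-zero case) is precisely where the squarefree hypothesis on $N$ and the coprimality $\gcd(n,N) = 1$ are used most delicately.
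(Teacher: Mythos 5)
Your first step is correct and coincides with the paper's: combining Theorem \ref{theorem lift 1}, Proposition \ref{proposition theta lift continuation} and Theorem \ref{theorem Borcherds products} gives $\sum_{Q}\Ehyp_{c_Q}(z,s) = \tfrac14\Phi(z,P_{\beta,m}(\,\cdot\,,0))\cdot s + O(s^2) = -\log\sabs{\Psi(z,P_{\beta,m})}\cdot s + O(s^2)$, where $m = D/4N = n^2/4N$ and $\beta = n$, and your bookkeeping of the factor $4$ is right. The genuine gap is in your second step. For $m>0$, Theorem \ref{theorem P_beta,m(tau)} contains no Fourier coefficients to ``read off'': it characterizes $P_{n,n^2/4N}(\tau,0)$ only implicitly, as the unique \emph{cusp form} satisfying the duality \eqref{PeterssonCoefficientFormula}. (Your claim that it is ``a theta-type form rather than a genuine cusp form'' is also mistaken: in this weight the cuspidal forms are themselves linear combinations of unary theta functions, so the dichotomy you draw does not exist.) Your fallback --- pinning down $\Psi$ by its weight, its trivial divisor on $\H$ and its cusp orders --- cannot get started either, because by \eqref{WeilVector} and Theorem \ref{theorem Borcherds products} (3) the cusp orders of $\Psi$ are the Weyl vectors $\frac{\sqrt N}{8\pi}(P_{n,n^2/4N},\theta^{w_c})^{\reg}$, i.e.\ exactly the inner products you have not computed. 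Consequently neither the exponent $6n/\varphi(N)$ nor the dichotomy in $(n,N)$ can be extracted from your outline.

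The missing engine, which is the core of the paper's proof of Theorem \ref{theorem hyperbolic KLF}, is the expansion of the cusp form $P_{n,n^2/4N}$ in the explicit orthogonal basis of $S_{1/2,\rho}$ provided by Lemma \ref{lemma theta_d}. For $N=pq$ this space is one-dimensional, spanned by $\theta_N = \theta - \theta^{w_p} - \theta^{w_q} + \theta^{w_N}$, so $P_{n,n^2/4N} = \frac{(P_{n,n^2/4N},\theta_N)}{(\theta_N,\theta_N)}\,\theta_N$. The numerator follows from the coefficient formula \eqref{PeterssonCoefficientFormula} together with the explicit theta coefficients \eqref{eq Fourier coefficients of theta_sigma_c}: it equals $-8\pi\sqrt{m}\,c_{\theta_N}(n,n^2/4N)$, which is $-\frac{8\pi n}{\sqrt N}$ if $(n,N)=1$ and $0$ if $(n,N)>1$, since in the latter case the signs $\mu((c,N))$ make the four contributions cancel; this single computation already yields both cases of the theorem, in particular $P_{n,n^2/4N}\equiv 0$ (hence $\calK\equiv 0$) when $(n,N)>1$. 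The denominator is $\frac{8\pi\varphi(N)}{3\sqrt N}$ by Lemma \ref{lemma theta_d}, so $P_{n,n^2/4N} = -\frac{3n}{\varphi(N)}\theta_N$ when $(n,N)=1$. Finally, Lemma \ref{lemma Borcherds products theta} gives $\Psi(z,\theta^{w_c}) = \eta(cz)\eta(Nz/c)$, whence $\log\sabs{\Psi(z,\theta_N)} = 2\log\abs{\frac{\eta(z)\eta(Nz)}{\eta(pz)\eta(qz)}}$ and $-\log\sabs{\Psi(z,P_{n,n^2/4N})} = \frac{6n}{\varphi(N)}\log\abs{\frac{\eta(z)\eta(Nz)}{\eta(pz)\eta(qz)}}$. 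Without these ingredients --- the theta basis, the evaluation of the inner product, and the Borcherds products of the unary theta functions --- your proposal stops exactly where the actual proof begins.
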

		
	For general squarefree $N$, the Kronecker limit function is given by the logarithm of the absolute value of an eta quotient of weight $0$, but the exponents of the quotient become much more complicated. We refer to Theorem \ref{theorem hyperbolic KLF} for the general statement.
	
	In order to present a Kronecker limit formula in the elliptic case, we recall that there are finitely many $N$ such that the generalized Fricke group $\Gamma_{0}^{*}(N)$ has genus $0$, for example, the set of all such primes is given by
	\[
		\{2,3,5,7,11,13,17,19,23,29,31,41,47,59,71\} .
	\] 
	For such $N$ let $j_{N}^{*} = e(-z) + O(e(z)) \in M_{0}^{!}(\Gamma_{0}^{*}(N))$ be the corresponding normalized Hauptmodul for $\Gamma_{0}^{*}(N)$. In this situation we obtain the following result.
	
	\begin{theorem} \label{theorem elliptic KLF introduction}
		Let $D<0$, and let $N$ be a squarefree positive integer such that the group $\Gamma_{0}^{*}(N)$ has genus $0$. Then the averaged elliptic Eisenstein series has a Laurent expansion at $s = 0$ of the form
		\[
			\sum_{Q \in \calQ_{\beta,D}/\Gamma_{0}(N)}\Eell_{z_{Q}}(z,s) = - \frac{1}{\sigma_{0}(N)}\sum_{Q \in \calQ_{\beta,D}/\Gamma_{0}(N)}\log\left(\big|j_{N}^{*}(z)-j_{N}^{*}(z_{Q})\big|^{2/|\Gamma_0(N)_Q|}\right)\cdot s + O(s^{2}),
		\]
		where $\sigma_{0}(N) = \sum_{d \mid N}1$ is the number of positive divisors of $N$.
	\end{theorem}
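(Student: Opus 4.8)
The plan is to read off the Laurent expansion directly from Theorem~\ref{theorem lift introduction} and then to evaluate the resulting theta lift at $s = 0$ by means of Borcherds products. For $D < 0$, Theorem~\ref{theorem lift introduction} rearranges to
\[
\sum_{Q \in \calQ_{\beta,D}/\Gamma_{0}(N)} \Eell_{z_{Q}}(z,2s) = \frac{(\pi|D|/N)^{s}}{2\,\Gamma(s)}\,\Phi(z,P_{\beta,D}(\,\cdot\,,s)) .
\]
Since $D \neq 0$, the lift $\Phi(z,P_{\beta,D}(\,\cdot\,,s))$ is holomorphic at $s = 0$, and by Proposition~\ref{proposition theta lift continuation} its value there equals $\Phi(z,P_{\beta,D}(\,\cdot\,,0))$. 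Using $(\pi|D|/N)^{s} = 1 + O(s)$ together with $1/\Gamma(s) = s + O(s^{2})$, I would substitute $s \mapsto s/2$ to pass from the argument $2s$ to $s$, obtaining
\[
\sum_{Q \in \calQ_{\beta,D}/\Gamma_{0}(N)} \Eell_{z_{Q}}(z,s) = \frac{1}{4}\,\Phi(z,P_{\beta,D}(\,\cdot\,,0)) \cdot s + O(s^{2}) .
\]
The vanishing of the constant term is automatic, since the elliptic Eisenstein series vanish at $s = 0$. Thus the Kronecker limit function is $\calK(z) = \tfrac{1}{4}\,\Phi(z,P_{\beta,D}(\,\cdot\,,0))$, and everything reduces to computing this single theta lift.

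Next I would evaluate $\Phi(z,P_{\beta,D}(\,\cdot\,,0))$ using Borcherds' theory of automorphic products. By Theorem~\ref{theorem P_beta,m(tau)}, for $D < 0$ the form $P_{\beta,D}(\tau,0)$ is a harmonic Maass form of weight $1/2$ for $\rho$ whose principal part is explicit (built from $q^{D/4N}\e_{\beta}$). The results of the Borcherds-products section then express the lift as $\Phi(z,P_{\beta,D}(\,\cdot\,,0)) = -4\log\snorm{\Psi_{\beta,D}(z)} + (\text{const})$, where $\Psi_{\beta,D}$ is a meromorphic modular form of weight $0$ on $\Gamma_{0}(N)$ whose divisor is the Heegner divisor attached to $\calQ_{\beta,D}$, i.e.\ a zero of order proportional to $1/|\Gamma_{0}(N)_{Q}|$ at each $z_{Q}$, with its pole supported at the cusps. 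Because $D < 0$ makes the relevant constant coefficient vanish, $\Psi_{\beta,D}$ genuinely has weight $0$, so no $\log\Im(z)$ term appears, consistent with the absence of such a term in the claimed formula.

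Finally I would identify $\Psi_{\beta,D}$ explicitly using the genus-$0$ hypothesis. On $X_{0}^{*}(N)$ the Hauptmodul $j_{N}^{*}$ is a coordinate, so $j_{N}^{*}(z) - j_{N}^{*}(z_{Q})$ has a simple zero at the image of $z_{Q}$ and a pole only at the cusp; matching these divisors against the Heegner divisor of $\Psi_{\beta,D}$ should express $\snorm{\Psi_{\beta,D}}$ as a product of the factors $\snorm{j_{N}^{*}(z) - j_{N}^{*}(z_{Q})}$ raised to powers $1/|\Gamma_{0}(N)_{Q}|$. The factor $1/\sigma_{0}(N)$ then enters because $\sigma_{0}(N) = 2^{\omega(N)} = |W_{N}|$ is the order of the Atkin-Lehner group, and the $\Gamma_{0}(N)$-invariant lift must be compared with the $\Gamma_{0}^{*}(N)$-invariant function $j_{N}^{*}$ through the cover $X_{0}(N) \to X_{0}^{*}(N)$. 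I expect this last step to be the main obstacle: one must verify that the Heegner divisor attached to $(\beta,D)$ descends to $X_{0}^{*}(N)$, equivalently that the averaged lift is genuinely $\Gamma_{0}^{*}(N)$-invariant, carefully track the local multiplicities coming from the elliptic stabilizers $\Gamma_{0}(N)_{Q}$ and the ramification of the cover, and confirm that the leftover additive constant is absorbed so that precisely the stated $1/\sigma_{0}(N)$-normalization survives.
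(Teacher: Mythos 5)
Your first two steps coincide with the paper's own argument: combining Theorem \ref{theorem lift 1} with Proposition \ref{proposition theta lift continuation}, and replacing $s$ by $s/2$, one gets
\[
\sum_{Q \in \calQ_{\beta,D}/\Gamma_{0}(N)}\Eell_{z_{Q}}(z,s) \;=\; \tfrac{1}{4}\,\Phi\bigl(z,P_{\beta,D}(\,\cdot\,,0)\bigr)\cdot s + O(s^{2}),
\]
and since the principal part of $P_{\beta,D}(\,\cdot\,,0)$ is $(\e_{\beta}+\e_{-\beta})e(D\tau/4N)$, so that $c^{+}(0,0)=0$, Theorem \ref{theorem Borcherds products}(4) gives $\Phi(z,P_{\beta,D}(\,\cdot\,,0))=-4\log\sabs{\Psi(z,P_{\beta,D})}$ with \emph{no} additive constant and no $\log\Im(z)$ term. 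Up to this point you are reproducing the proofs of Theorems \ref{theorem hyperbolic KLF} and \ref{theorem elliptic KLF}.

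The genuine gap is in the identification of $\Psi(z,P_{\beta,D})$ with the product of the factors $j_{N}^{*}(z)-j_{N}^{*}(z_{Q})$. Matching zeros in $\H$ is not sufficient: a weakly holomorphic weight-$0$ form for $\Gamma_{0}(N)$ is determined by its divisor only if that divisor includes the orders at all $\sigma_{0}(N)$ cusps (two such forms with the same zeros in $\H$ can differ by a nonconstant modular function whose divisor is supported at the cusps, e.g.\ an eta quotient). The paper's proof of Theorem \ref{theorem elliptic KLF} therefore computes the cusp orders $\ord_{1/c}(\Psi_{\beta,D})=\frac{c}{N}\rho_{P_{\beta,D},1/c}$ via the Weyl vectors $\rho_{P_{\beta,D},1/c}=\frac{\sqrt{N}}{8\pi}(P_{\beta,D},\theta^{w_{c}})^{\reg}$, and evaluates them as $-H_{N}(\beta,m)/\sigma_{0}(N)$ (with $m=D/4N$) using two inputs your sketch does not contain: (i) $P_{\beta,D}$ is orthogonal to cusp forms (Theorem \ref{theorem P_beta,m(tau)}), so the inner product with $\theta^{w_{c}}$ equals the inner product with the average $\frac{1}{\sigma_{0}(N)}\sum_{d\mid N}\theta^{w_{d}}$ --- this, and not the degree of the cover $X_{0}(N)\to X_{0}^{*}(N)$, is where the normalization $1/\sigma_{0}(N)$ actually comes from; and (ii) Zagier's weight-$3/2$ Eisenstein series, with $\xi_{3/2}E_{3/2}=-\frac{\sqrt{N}}{4\pi}\sum_{d\mid N}\theta^{w_{d}}$ and holomorphic coefficients given by Hurwitz class numbers, combined with Stokes' theorem. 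Only after these cusp orders are known (together with the normalization of the leading coefficient) does the uniqueness argument close, and the genus-zero identification then consists of checking that the $j_{N}^{*}$-product has the same zeros, the same cusp orders, and leading coefficient $1$.

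Moreover, the repair you propose for the last step --- proving that the Heegner divisor attached to $(\beta,D)$ descends to $X_{0}^{*}(N)$, i.e.\ is Atkin--Lehner invariant --- cannot be carried out as stated: the involution corresponding to $c\mid N$ maps the $(\beta,D)$-Heegner points to the $(w_{c}(\beta),D)$-Heegner points, and $w_{c}(\beta)\not\equiv\pm\beta \pmod{2N}$ in general (for instance $N=6$, $\beta=1$, $D=-23$, where $w_{2}(1)=7$), so the divisor of $\Psi_{\beta,D}$, which by Theorem \ref{theorem Borcherds products}(2) is supported on the single class $\pm\beta$, is genuinely not $\Gamma_{0}^{*}(N)$-invariant. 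Your instinct that this is the delicate point is sound --- each factor $j_{N}^{*}(z)-j_{N}^{*}(z_{Q})$ is automatically $\Gamma_{0}^{*}(N)$-invariant and hence vanishes on the full Atkin--Lehner orbit of $z_{Q}$, so the stated identity is really only transparent when the class $\pm\beta$ is Atkin--Lehner stable --- but the paper's route never invokes any descent: it works entirely on $X_{0}(N)$ with the zeros-plus-cusp-orders characterization, and that is the argument you would need to supply.
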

	
	For general squarefree $N$, the Kronecker limit function is given by the logarithm of the absolute value of a holomorphic function $\Psi_{\beta,D}$ on $\H$ which transforms like a modular form of weight $0$ for some unitary character, and which is determined by the orders of its roots at the Heegner points $z_{Q}$ for $Q \in Q_{\beta,D}$ and its orders at the cusps of $\Gamma_{0}(N)$. See Theorem \ref{theorem elliptic KLF} for the details.
	
	\medskip
	
	We now describe the organization of our work. In Section 2 we introduce the basic concepts used in this presentation, such as the Grassmannian model of the upper-half plane and its underlying lattice, vector valued harmonic Maass forms and unary theta functions, and Borcherds' theory of regularized theta lifts and automorphic products.
	
	Afterwards, we study a vector valued version of Selberg's Poincar\'e series in Section 3. We use spectral theory to establish the meromorphic continuation of the corresponding Kloosterman zeta functions involved in the Fourier expansion of the Poincar\'e series, which enables us to continue our Poincar\'e series to the whole complex plane. In particular, we can explicitly describe the Poincar\'e series at the particular point $s=0$.
	
	In Section 4 we prove the existence and continuation of the regularized theta lift of the above Poincar\'e series, showing that the evaluation of the continuation of the lift at $s=0$ essentially agrees with the lift of the evaluation of the Poincar\'e series at this point. Moreover, we determine the regularized theta lift of the Poincar\'e series via unfolding against the Poincar\'e series, seeing that it actually can be written as an averaged sum of hyperbolic, parabolic or elliptic Eisenstein series, depending on the parameter of the Poincar\'e series.
	
	Finally, in Section 5 we use this identity between the theta lift of our Poincar\'e series and the different non-holomorphic Eisenstein series to obtain Kronecker limit type formulas for all three types of Eisenstein series. As these are of rather different nature we study them separately, starting with the classical parabolic case. The general hyperbolic and elliptic Kronecker limit formulas are then given in Theorem \ref{theorem hyperbolic KLF} and Theorem \ref{theorem elliptic KLF}. We end both sections with some comments on these formulas.
	
	\medskip
	
	We thank Jan H. Bruinier for helpful discussions.

\section{Preliminaries}\label{section preliminaries}

\subsection{The modular curve $\Gamma_0(N) \backslash \H$}

The group $\SL_2(\R)$ acts on the complex upper-half plane $\H$ by fractional linear transformations. In the present work we focus on the congruence subgroup $\Gamma_0(N)$, consisting of integer matrices with determinant $1$ and lower left entry being divisible by $N$. In particular, the group $\Gamma_0(N)$ is a Fuchsian group of the first kind. In order to simplify the presentation we assume that $N$ is squarefree. In this case the cusps of $\Gamma_0(N)$ can be represented by the fractions $1/c$ where $c$ runs through the positive divisors of $N$, and the width of the cusp $1/c$ is given by $N/c$. We denote the set of cusps of $\Gamma_0(N)$ by $C(\Gamma_0(N))$. We also note that for $N$ being squarefree the index of $\SL_2(\Z)$ in $\Gamma_0(N)$ is simply given by $\sigma_1(N)$.

\subsection{The Grassmannian model of the upper half-plane}

We consider the rational quadratic space $V$ of signature $(2,1)$ given by the trace zero matrices in $\Mat_2(\Q)$ together with the quadratic form $Q(X)=-N\det(X)$. The associated bilinear form is $(X,Y)=N\tr(XY)$. The group $\SL_2(\Q)$ acts on $V$ by $g.X = g X g^{-1}$. We let $\Gr$ be the Grassmannian of $2$-dimensional subspaces in $V(\R) = V\otimes \R$ on which the quadratic form $Q$ is positive definite, and we identify $\Gr$ with the complex upper half-plane $\H$ by associating to $z = x + iy \in \H$ the orthogonal complement $X(z)^\perp$ of the negative line $\R X(z)$ generated by
 \[
	 X(z)=\frac{1}{\sqrt{N}y} \begin{pmatrix} -x &|z|^{2} \\  -1 & x \end{pmatrix}.
 \]
Since $g.X(z)=X(gz)$ for $g\in \SL_{2}(\R)$ and $z \in \H$ the above identification of $\Gr$ and $\H$ is $\SL_{2}(\R)$-equivariant.

In $V$ we consider the lattice
\[
	L = \left\{ \begin{pmatrix}b & c/N \\ -a & -b\end{pmatrix}: a,b,c \in \Z\right\}.
\]
Its dual lattice is
\[
	L' = \left\{ \begin{pmatrix}b/2N & c/N \\ -a & -b/2N\end{pmatrix}: a,b,c  \in\Z\right\}.
\]
We see that $L'/L$ is isomorphic to $\Z/2N\Z$, equipped with the quadratic form $x \mapsto x^{2}/4N \mod \Z$. Note that the group $\Gamma_{0}(N)$ acts on $L$ and fixes the classes of $L'/L$.
	
For fixed $\beta \in L'/L$ and $m \in \Z + Q(\beta)$ we let
\begin{align*}
	L_{\beta,m} = \{X \in L+\beta: Q(X) = m\}.
\end{align*}
If $m \neq 0$ the group $\Gamma_{0}(N)$ acts on $L_{\beta,m}$ with finitely many orbits, and elements $X \in L_{\beta,m}$ with $X = \smat{b/2N&c/N\\-a&-b/2N}$ correspond to integral binary quadratic forms
\[
	Q_X(x,y) = aNx^{2} +bxy + cy^{2}
\]
of discriminant $4Nm$ with $b \equiv \beta \mod 2N$. This identification is compatible with the corresponding actions of $\Gamma_{0}(N)$ in the sense that $Q_{X}.g = Q_{g^{-1}.X}$ for $g \in \Gamma_{0}(N)$. In particular, we have a bijection between $\Gamma_{0}(N) \backslash L_{\beta,m}$ and $\mathcal{Q}_{\beta,4Nm}/\Gamma_{0}(N)$. Moreover, if $m>0$ or $m<0$ we associate to $X \in L_{\beta,m}$ the Heegner geodesic $c_X = c_{Q_X}$ or the Heegner point $z_X = z_{Q_X}$, respectively. Clearly, this identification is again compatible with the corresponding actions of $\Gamma_{0}(N)$.
	
The set $\Iso(V)$ of isotropic lines in $V$ can be identified with $\bbP^{1}(\Q)$ via the map
\begin{align} \label{eq cusps and isotropic lines}
	\bbP^{1}(\Q) \to \Iso(V) , \quad (\alpha:\beta) \mapsto \pmat{-\alpha \beta & \alpha^2 \\ - \beta^2 & \alpha \beta} ,
\end{align}
and this identification respects the corresponding actions of $\Gamma_0(N)$. In particular, it gives rise to a bijection between $\Gamma_{0}(N)\setminus \Iso(V)$ and the set of cusps of $\Gamma_{0}(N)$. We write $p_X$ for the cusp associated to the isotropic line $\Q X$. Further, we choose a generator $X_\ell \in \ell$ with $\ell \cap L = \Z X_\ell$ for every line $\ell \in \Iso(V)$. Clearly, $X_\ell$ is unique up to a sign.

\subsection{Harmonic Maass forms}
	
We let $\Mp_{2}(\R)$ be the metaplectic group consisting of all pairs $(M,\phi)$ with $M = \left(\begin{smallmatrix}a & b \\ c & d \end{smallmatrix}\right) \in \SL_{2}(\R)$ and $\phi \colon \H \to \C$ holomorphic such that $\phi^{2}(\tau) = c\tau + d$. Further, we denote the preimage of $\SL_2(\Z)$ under the covering map $(M,\phi) \mapsto M$ by $\Mp_2(\Z)$. For $\beta \in L'/L$ we let $\e_{\beta}$ be the standard basis vectors of the group ring $\C[L'/L]$, and we let $\langle \cdot,\cdot \rangle$ be the inner product on $\C[L'/L]$ which satisfies $\langle \e_{\beta},\e_{\gamma}\rangle = \delta_{\beta,\gamma}$ and is antilinear in the second variable. The associated Weil representation $\rho$ is defined on the generators $(T,1)$ and $(S,\sqrt{\tau})$ of $\Mp_{2}(\Z)$ by
\[
	\rho(T,1) \e_{\beta}
		= e(Q(\beta)) \e_{\beta} , \qquad
	\rho(S,\sqrt{\tau}) \e_{\beta}
		= \frac{e(-1/8)}{\sqrt{|L'/L|}} \sum_{\gamma \in L'/L} e(-(\beta,\gamma)) \e_{\gamma} .
\]
Here $T=\smat{1&1\\0&1}$, $S=\smat{0&-1\\1&0}$ and $e(z) = e^{2\pi i z}$ for $z\in \C$, as usual. The dual Weil representation is denoted by $\bar{\rho}$.
		
Recall from \cite{BruinierFunke04} that a smooth function $f: \H \to \C[L'/L]$ is called a harmonic Maass form of weight $k \in 1/2+\Z$ for $\rho$ if it is annihilated by the weight $k$ hyperbolic Laplace operator $\Delta_k$ given by
\begin{align}\label{LaplaceOperator}
	\Delta_k = -v^{2}\left(\frac{\partial^{2}}{\partial u^{2}} + \frac{\partial^{2}}{\partial v^{2}}\right) + ikv \left( \frac{\partial}{\partial u} + i\frac{\partial}{\partial v}\right) , \quad (\tau = u + iv),
\end{align}
if it is invariant under the weight $k$ slash operator given by
\[
	f \sweightkaction{k,\rho}(M,\phi) = \phi(\tau)^{-2k}\rho^{-1}(M,\phi)f(M\tau)
\]
for all $(M,\phi) \in \Mp_{2}(\Z)$, and if it grows at most linearly exponentially at $\infty$. We denote the space of harmonic Maass forms by $H_{k,\rho}$, and we let $M^{!}_{k,\rho}$ be the subspace of weakly holomorphic modular forms, consisting of the forms in $H_{k,\rho}$ which are holomorphic on $\H$. 
		
The antilinear differential operator $\xi_k f=2iv^{k}\overline{\frac{\partial}{\partial \bar{\tau}}f(\tau)}$ defines a surjective map $\xi_k: H_{k,\rho}\rightarrow M^{!}_{2-k,\bar{\rho}}$ with kernel $M^{!}_{k,\rho}$. We let $H^{+}_{k,\rho}$ be the space of harmonic Maass forms which map to cusp forms under $\xi_{k}$. A form $f \in H_{k,\rho}^{+}$ has a Fourier expansion of the form
\begin{align}\label{FourierExpansion}
	f(\tau) = \sum_{\beta \in L'/L} \Bigg( \sum_{\substack{m \in \Z+Q(\beta) \\ m \gg -\infty}} c_{f}^{+}(\beta,m) e(m\tau) + \sum_{\substack{m \in \Z+Q(\beta) \\ m < 0}} c_{f}^{-}(\beta,m) \Gamma(1-k,4\pi |m|v) e(m\tau) \Bigg) \e_{\beta} ,
\end{align}
where $\Gamma(s,x) = \int_{x}^{\infty}t^{s-1}e^{-t}dt$ denotes the incomplete gamma function. The finite Fourier polynomial $\sum_{\beta \in L'/L}\sum_{m \leq 0}c_{f}^{+}(\beta,m)e(m\tau) \mathfrak{e}_{\beta}$ is called the principal part of $f$.

The regularized inner product of $f \in H^{+}_{k,\rho}$ and $g \in M_{k,\rho}$  is defined by
\begin{align}\label{PeterssonInnerProduct}
	(f,g)^{\reg} = \lim_{T \to \infty} \int_{\mathcal{F}_{T}}\langle f(\tau),g(\tau) \rangle v^{k}\frac{du\, dv}{v^{2}},
\end{align}
where $\mathcal{F}_{T} = \{\tau \in \H: |u|\leq \tfrac{1}{2}, |\tau| \geq 1 ,v \leq T\}$ is a truncated fundamental domain for the action of $\SL_{2}(\Z)$ on $\H$. If the integral converges without the regularization (e.g., if $f$ and $g$ are cusp forms), this is just the usual Petersson inner product $(f,g)$.

%

\subsection{Unary theta functions} \label{section unary theta functions}

We let $K$ be the one-dimensional positive definite sublattice of $L$ generated by the vector $\smat{1&0\\0&-1}$. Its dual lattice $K'$ is generated by $\frac{1}{2N}\smat{1&0\\0&-1}$. We see that $K'/K \cong L'/L$, so modular forms for the Weil representation of $K$ are the same as modular forms for $\rho$. Thus the unary theta function associated to the lattice $K$ given by
\[
	\theta(\tau) = \sum_{\beta \in K'/K} \sum_{X \in K +\beta} e(Q(X)\tau) \e_{\beta} = \sum_{\beta (2N)}\sum_{\substack{n \in \Z \\ n \equiv \beta (2N)}}e(n^{2}\tau/4N)\e_{\beta}
\]
is a modular form of weight $1/2$ for $\rho$ by Theorem 4.1 in \cite{Borcherds}.
	
The orthogonal group $O(L'/L)$ acts on vector valued functions $f = \sum_{\beta} f_{\beta} \e_{\beta}$ modular of weight $1/2$ for $\rho$ by $f^{w} = \sum_{\beta} f_{\beta} \e_{w(\beta)}$. The elements of $O(L'/L)$ are all involutions, so-called Atkin-Lehner involutions, and as $N$ is squarefree they correspond to the positive divisors $c$ of $N$. More precisely, the automorphism $w_{c}$ corresponding to $c \mid N$ is defined by the equations
\begin{align}\label{AtkinLehnerInvolution}
	w_{c}(\beta) \equiv -\beta \ (2c) \quad \text{and} \quad w_{c}(\beta) \equiv \beta \ (2N/c)
\end{align}
for $\beta \in L'/L \cong \Z/2N\Z$ (compare \cite{EichlerZagier}, Theorem 5.2). We also note that $f^{w_{c}} = f^{w_{N/c}}$ and $(f^{w_{c}},g) = (f,g^{w_{c}})$ for $f,g$ modular of weight $1/2$ for $\rho$.

Using a dimension formula for $M_{1/2,\rho}$ by Skoruppa, it is easy to show that the unary theta functions $\theta^{w_{c}}$, with $c$ running through the positive divisors of $N$ modulo the relation $c \sim N/c$, form a basis of $M_{1/2,\rho}$ (compare \cite{BruinierSchwagenscheidt}, Lemma 2.1).

\subsection{Borcherds products}\label{section borcherds products}

The Siegel theta function associated to the lattice $L$ is defined by
\[
	\Theta(\tau,z) = v^{1/2}\sum_{\beta \in L'/L} \ \sum_{X \in L+\beta} e(Q(X_{z})\tau + Q(X_{z^{\perp}})\bar{\tau}) \e_\beta,
\]
where $X_z$ denotes the orthogonal projection of $X$ onto the positive definite subspace $X(z)^\perp$, and $X_{z^\perp}$ the projection of $X$ onto the negative line $\R X(z)$. By \cite{Borcherds}, Theorem 4.1, the Siegel theta function is $\Gamma_{0}(N)$-invariant in $z$, and transforms like a modular form of weight $1/2$ for $\rho$ in $\tau$. 

Given a function $f \colon \H \to \C[L'/L]$ modular of weight $1/2$ with respect to the Weil representation $\rho$, Borcherds' regularized theta lift of $f$ is defined by
\begin{align} \label{eq def theta lift}
	\Phi(z,f) = \CT_{t=0} \left[ \lim_{T \to \infty} \int_{\calF_T} \sprod{f(\tau),\Theta(\tau,z)} v^{1/2-t} \frac{du \, dv}{v^{2}} \right],
\end{align}
where $\CT_{t=0}F(t)$ denotes the constant term of the Laurent expansion of the analytic continuation of $F(t)$ at $t=0$. The lift was studied by Borcherds in \cite{Borcherds} for weakly holomorphic forms $f \in M_{1/2,\rho}^{!}$, and generalized by Bruinier and Ono \cite{BruinierOno} to harmonic Maass forms $f \in H_{1/2,\rho}^{+}$. It turns out that for such $f$, the lift $\Phi(z,f)$ defines a $\Gamma_{0}(N)$-invariant real analytic function with logarithmic singularities at certain Heegner points, which are determined by the principal part of $f$.

The so-called Weyl vector $\rho_{f,1/c}$ associated to a harmonic Maass form $f \in H_{1/2,\rho}^{+}$ and the cusp $1/c$ with $c \mid N$ is defined by
\begin{align}\label{WeilVector}
	\rho_{f,1/c} = \frac{\sqrt{N}}{8\pi}(f,\theta^{w_{c}})^{\reg}.
\end{align}
Note that $\theta^{w_c}$ can also be seen as the unary theta function associated to the one-dimensional positive definite sublattice of $L$ corresponding to the cusp $1/c$ (compare \cite{BruinierOno}, Section 4.1).
	
\begin{theorem}[\cite{BruinierOno}, Theorem 6.1] \label{theorem Borcherds products}
	Let $f \in H_{1/2,\rho}^{+}$ be a harmonic Maass form with real coefficients $c_{f}^{+}(\beta,m)$ for all $\beta \in L'/L$ and $m \in \Z + Q(\beta)$. Moreover, assume that $c_{f}^{+}(\beta,m) \in \Z$ for $m \leq 0$. The infinite product
	\begin{align}\label{BorcherdsProduct}
		\Psi(z,f) = e(\rho_{f,\infty}z) \prod_{n=1}^\infty \big(1-e(nz)\big)^{c^{+}_{f}(n,n^{2}/4N)}
	\end{align}
	converges for $\Im(z)$ sufficiently large and has a meromorphic continuation to all of $\H$ with the following properties:
	\begin{enumerate}
		\item It is a meromorphic modular form of weight $c_f^+(0,0)$ for $\Gamma_{0}(N)$ with a unitary character which may have infinite order.
		\item The orders of $\Psi(z,f)$ in $\H$ are determined by the Heegner divisor
		\[
		\frac{1}{2}\sum_{\beta \in L'/L}\sum_{\substack{m \in \Z + Q(\gamma) \\ m < 0}}c_{f}^{+}(\beta,m)\sum_{X \in L_{\beta,m}}(z_{X}).
		\]
		\item The order of $\Psi(z,f)$ at the cusp $1/c$ for $c \mid N$ is given by
			\[
				\ord_{1/c}(\Psi(z,f)) = \frac{c}{N}\rho_{f,1/c}.
			\]
		\item The regularized theta lift of $f$ is given by
			\begin{align*}
				\Phi(z,f) = -c_f^{+}(0,0)(\log(4\pi N)+\Gamma'(1)) - 4\log \abs{ \Psi(z,f) \Im(z)^{c_f^{+}(0,0)/2} } .
			\end{align*}
	\end{enumerate}
\end{theorem}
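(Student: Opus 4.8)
The plan is to prove all four statements simultaneously by computing the regularized theta lift $\Phi(z,f)$ explicitly and then reading off the properties of $\Psi(z,f)$ from the resulting formula, which is precisely assertion (4). Since (4) expresses $\Phi(z,f)$, up to an additive constant, as $-4\log\abs{\Psi(z,f)\Im(z)^{c_f^+(0,0)/2}}$, the entire theorem reduces to evaluating the integral in \eqref{eq def theta lift} and recognizing its value as the logarithm of the absolute value of the product \eqref{BorcherdsProduct}.

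First I would compute the lift by the unfolding method adapted to the signature $(2,1)$ lattice $L$. The key device is to split off a primitive isotropic vector in $L$ --- equivalently, to work relative to a cusp $1/c$ --- so that the Siegel theta function $\Theta(\tau,z)$ decomposes, by Poisson summation along the isotropic line, into the one-dimensional theta function attached to the rank-one positive definite sublattice $K$ (that is, $\theta$, or after an Atkin-Lehner twist $\theta^{w_c}$) times an elementary Gaussian factor depending on $z$. Inserting the Fourier expansion \eqref{FourierExpansion} of $f$ and unfolding the $\SL_2(\Z)$-integral, the contributions separate: the constant Fourier coefficient $c_f^+(0,0)$ produces the $\log\Im(z)$ term together with the normalizing constant $-c_f^+(0,0)(\log(4\pi N)+\Gamma'(1))$, the coefficients $c_f^+(n,n^2/4N)$ with $m=n^2/4N>0$ exponentiate to the infinite product $\prod_n(1-e(nz))^{c_f^+(n,n^2/4N)}$, and the leading exponential $e(\rho_{f,\infty}z)$ arises from the regularized constant term, governed by the Weyl vector \eqref{WeilVector}.

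Having an explicit formula for $\Phi(z,f)$, I would then deduce (1)--(3). The $\Gamma_0(N)$-invariance of $\Phi(z,f)$ is inherited from that of $\Theta(\tau,z)$, so comparing the formula for $\Phi$ at $z$ and at $Mz$, and using $\Im(Mz)=\Im(z)/\abs{cz+d}^2$, forces $\Psi$ to transform with weight $c_f^+(0,0)$ and a unitary character, giving (1). For the meromorphy and the divisor (2), the central analytic point is that away from the Heegner points the function $\Phi(z,f)+2c_f^+(0,0)\log\Im(z)$ is harmonic --- it satisfies a $dd^c$-type equation coming from $\Delta\log\Im(z)$ --- so locally it is $-4\log$ of the absolute value of a holomorphic function; matching the logarithmic singularities of the regularized integral at each $z_X$ with $Q(X)=m<0$ against the divergence of the theta integral there produces exactly the orders prescribed by the Heegner divisor. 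The cusp orders in (3) follow by tracking the growth of $\Phi$ as $z$ approaches $1/c$, which by the splitting above is controlled by the same Weyl vector $\rho_{f,1/c}=\frac{\sqrt N}{8\pi}(f,\theta^{w_c})^{\reg}$.

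The hard part will be the careful evaluation of the regularized integral --- in particular, the interplay between the $\CT_{t=0}$ regularization, the meromorphic continuation in $t$, and the convergence of the theta integral near the cusps. Handling the non-holomorphic part of $f$ (present because $f\in H_{1/2,\rho}^+$ rather than $M_{1/2,\rho}^!$) requires showing that it contributes to the divisor and to the $\log\Im(z)$ behaviour but not to the holomorphic product expansion, which is the essential generalization of Borcherds' original computation carried out by Bruinier and Ono. Verifying that the exponents $c_f^+(n,n^2/4N)$ are the correct ones, that the resulting product converges for $\Im(z)$ large, and that the integrality assumption $c_f^+(\beta,m)\in\Z$ for $m\leq 0$ guarantees $\Psi(z,f)$ is a genuine meromorphic function, then completes the argument.
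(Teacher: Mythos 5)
The first thing to note is that the paper does not prove this statement at all: Theorem \ref{theorem Borcherds products} is imported, adapted to the signature $(2,1)$ lattice $L$, from \cite{BruinierOno}, Theorem 6.1, which in turn rests on Borcherds' singular theta lift machinery \cite{Borcherds}. So there is no internal proof to compare your proposal against; the only meaningful comparison is with the cited literature.

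Measured against that literature, your outline does reproduce the correct strategy: one splits off a primitive isotropic vector, reduces the Siegel theta function to the theta function of the rank-one positive definite sublattice $K$ by Poisson summation, unfolds the integral \eqref{eq def theta lift}, and identifies the contributions --- the constant coefficient giving the $\log\Im(z)$ term, the coefficients $c_f^+(n,n^2/4N)$ giving the product \eqref{BorcherdsProduct}, and the Weyl vector \eqref{WeilVector} giving the leading exponential --- after which (1)--(3) are read off from (4) essentially as you describe. But as a proof your text is a plan, not an argument: everything that makes the theorem a theorem is deferred. Concretely, (i) the evaluation of the regularized integral and its interaction with the $\CT_{t=0}$ regularization is asserted, not carried out (this is Borcherds' Theorem 13.3 and its extension in \cite{BruinierOno}); (ii) the product expansion is only valid on a Weyl chamber near $\infty$, and the meromorphic continuation across walls --- part of the statement --- is never addressed; (iii) the genuinely new difficulty in the harmonic Maass form case, namely that the non-holomorphic part of $f$ (nonzero whenever $\xi_{1/2}f \neq 0$) contributes neither to the product exponents nor to the divisor, is named by you as ``the essential generalization'' but not proved; and (iv) your deduction of (1) from $\Gamma_0(N)$-invariance of $\Phi$ presupposes the continuation of $\Psi$, and the possibility that the unitary character has infinite order (which the paper later exploits, and which is tied to irrationality of the $c_f^+(n,n^2/4N)$) requires the specific analysis of \cite{BruinierOno}. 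None of these is a wrong turn --- they are exactly the content of the cited proof --- but filling them in would amount to rewriting Section 6 of \cite{BruinierOno}, which is why the authors treat the result as a black box.
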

\section{Selberg's Poincar\'e series} \label{section Poincare series}

In the following we define a vector valued version of the non-holomorphic Poincar\'e series introduced by Selberg in his famous work \cite{Selberg1965}. Following the ideas of Selberg we will use spectral theory and Fourier analysis to prove a meromorphic continuation of this Poincaré series. Finally, we will evaluate the continued series at the special value $s=0$.

Given $\beta \in L'/L$ and $m \in \Z+Q(\beta)$, we define the non-holomorphic Poincar\'e series of index $(\beta,m)$ as
\begin{align} \label{definition Pmb}
	P_{\beta,m}(\tau,s)
	= \frac{1}{2} \sum_{(M,\phi) \in \ssprod{(T,1)} \backslash \Mp_2(\Z)} v^s e(m\tau) \e_\beta \weightkaction{1/2,\rho} (M,\phi)
\end{align}
for $\tau = u+iv \in \H$ and $s \in \C$ with $\Re(s)>3/4$. The sum is absolutely convergent and defines an analytic function in $s$, which is by construction modular of weight $1/2$ with respect to $\rho$. However, $P_{\beta,m}(\tau,s)$ is not an eigenfunction of the hyperbolic Laplace operator, but it satisfies the differential equation
\begin{align} \label{eq DE of P}
	\Delta_{1/2} P_{\beta,m}(\tau,s) = s \left(\frac{1}{2}-s\right) P_{\beta,m}(\tau,s) + 4\pi ms \, P_{\beta,m}(\tau,s+1) .
\end{align}
In order to study the meromorphic continuation of $P_{\beta,m}(\tau,s)$ we also need to investigate the Kloosterman zeta functions appearing in its Fourier expansion. In fact, in \cite{Selberg1965}, \cite{GoldfeldSarnak1983} and \cite{Pribitkin2000} these Poincar\'e series were studied for the purpose of finding good growth estimates for the Kloosterman zeta functions. However, the case of $\Delta_k$ having continuous spectrum was mostly neglected.

Given $\beta,\gamma \in L'/L$ and $m \in \Z+Q(\beta)$, $n \in \Z+Q(\gamma)$, we define the Kloosterman zeta function by
\begin{align} \label{def Kloosterman zeta}
	Z(s;\beta,m,\gamma,n) = \sum_{c \neq 0} |c|^{1-2s} H_c(\beta,m,\gamma,n)
\end{align}
for $s \in \C$ with $\Re(s)>1$. Here $H_c(\beta,m,\gamma,n)$ is the generalized Kloosterman sum of weight $k=1/2$ defined for example in \cite[eq. (1.38)]{Bruinier2002}, that is
\begin{align} \label{def Kloosterman sum}
	H_c(\beta,m,\gamma,n) = \frac{e(-\sign(c)k/4)}{|c|} \sum_{\substack{d \in (\Z/c\Z)^* \\ M=\smat{a&b\\c&d} \in \SL_2(\Z)}} \ssprod{\e_\beta, \rho(\tilde M) \e_\gamma} \ e\left(\frac{ma+nd}{c}\right) .
\end{align}
The coefficients $\ssprod{\e_\beta, \rho(\tilde M) \e_\gamma}$ are universally bounded because $\rho$ factors through a double cover of the finite group $\SL_2(\Z/N\Z)$. Thus $H_c(\beta,m,\gamma,n)$ is bounded by some constant only depending on the underlying lattice $L$, and
the sum in (\ref{def Kloosterman zeta}) defines a holomorphic function in $s$ for $\Re(s)>1$.

\begin{proposition} \label{proposition FE of P}
	Let $\beta \in L'/L$ and $m \in \Z+Q(\beta)$. The Poincar\'e series $P_{\beta,m}(\tau,s)$ has a Fourier expansion of the form
	\begin{align*}
		P_{\beta,m}(\tau,s) = v^s e(m\tau) (\e_\beta + \e_{-\beta})
		+ b(0,0;v,s) \e_0
		+ \sum_{\gamma \in L'/L} \ \sum_{\substack{n \in \Z+Q(\gamma) \\ n \neq 0}} b(\gamma,n;v,s) e(nu) \e_\gamma ,
	\end{align*}
	for $\tau = u+iv \in \H$ and $s \in \C$ with $\Re(s)>3/4$. The Fourier coefficients $b(\gamma,n;v,s)$ are given by
	\[
		b(0,0;v,s) = \frac{2^{3/2-2s} \pi v^{1/2-s} }{\Gamma(s)} \ 
			\sum_{j=0}^\infty \frac{1}{j!} \left(-\frac{\pi m}{v}\right)^j
			\frac{ \Gamma(2s-1/2+j) }{ \Gamma(s+1/2+j) } \ 
			Z(1/4+s+j;\beta,m,0,0) ,
	\]
	if $n=0$, and by
	\begin{align*}
		b(\gamma,n;v,s)
		&= 2^{1/2} \pi^{s+1/2} |n|^{s-1/2}
			\sum_{j=0}^\infty \frac{(-4\pi^2|n|m)^j}{j!}
			Z(1/4+s+j;\beta,m,\gamma,n) \\
		&\hspace{1cm} \times \begin{dcases}
			\frac{(4\pi|n|v)^{-1/4-j/2}}{\Gamma(s+1/2+j)} \ W_{1/4+j/2, s-1/4+j/2} (4\pi|n|v) , & \text{if $n>0$} , \\
			\frac{(4\pi|n|v)^{-1/4-j/2}}{\Gamma(s)} \ W_{-1/4-j/2, s-1/4+j/2} (4\pi|n|v) , & \text{if $n<0$} ,
		\end{dcases}
	\end{align*}
	if $n \neq 0$.
\end{proposition}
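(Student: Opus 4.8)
The plan is to compute the Fourier expansion directly from the definition \eqref{definition Pmb} by a standard unfolding-and-Poisson-summation argument, adapted to the vector valued setting. First I would split the sum over $\ssprod{(T,1)} \backslash \Mp_2(\Z)$ according to the lower left entry $c$ of $M = \smat{a&b\\c&d}$. The identity coset (and the coset of $-I$, which is why the prefactor $\tfrac{1}{2}$ and the symmetrized term $\e_\beta + \e_{-\beta}$ appear) contributes the term $v^s e(m\tau)(\e_\beta + \e_{-\beta})$; note that for $c = 0$ the slash action only permutes $\e_\beta \mapsto \e_{\pm\beta}$ and multiplies by a root of unity, accounting precisely for this leading term. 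For $c \neq 0$, I would further decompose the remaining cosets by writing each $M$ as a product involving $T^j$ on the left and the Bruhat-type decomposition, so that after applying $\weightkaction{1/2,\rho}$ the contribution organizes into a sum over $c \neq 0$, over $d \bmod c$, and over an integer translation parameter. Collecting the representation coefficients $\ssprod{\e_\beta, \rho(\tilde M)\e_\gamma}$ and the phase $e\left(\frac{ma+nd}{c}\right)$ is exactly what produces the generalized Kloosterman sums $H_c(\beta,m,\gamma,n)$ of \eqref{def Kloosterman sum}, and summing $|c|^{1-2s}$ against them yields the Kloosterman zeta functions $Z(\,\cdot\,;\beta,m,\gamma,n)$ of \eqref{def Kloosterman zeta}.

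Next I would carry out the $u$-integration to extract the $n$-th Fourier coefficient. After unfolding, the relevant integral over the real line has the shape
\[
	\int_{-\infty}^\infty (\text{Im of } M\tau)^s \, e(m \cdot M\tau) \, e(-nu) \, du ,
\]
and substituting $M\tau = \frac{a}{c} - \frac{1}{c^2\tau}$ (for $c \neq 0$) reduces this, after a shift and rescaling, to a classical integral of Whittaker type. The key analytic input is the Fourier transform of $v^s e(m\tau)$ in the variable $u$, which for $m \neq 0$ is a confluent hypergeometric (Whittaker) function; this is where the $W_{\mu,\nu}$ functions enter. I would expand $e(m\tau) = e(mu)e^{-2\pi m v}$ and, more precisely, expand the part depending on $m$ in its Taylor series in order to produce the sum over $j$ with coefficients $\frac{(-4\pi^2|n|m)^j}{j!}$ (respectively $\frac{1}{j!}(-\pi m/v)^j$ in the $n = 0$ case). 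Each term in this expansion contributes a shifted Kloosterman zeta $Z(1/4 + s + j; \dots)$ together with the appropriate gamma factors and the Whittaker function of the stated indices; the two cases $n > 0$ and $n < 0$ differ precisely in the sign of the first Whittaker index ($1/4 + j/2$ versus $-1/4 - j/2$) and in whether $\Gamma(s+1/2+j)$ or $\Gamma(s)$ appears in the normalization, reflecting the two branches of the relevant integral representation of $W_{\mu,\nu}$.

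The main obstacle I anticipate is bookkeeping rather than conceptual: tracking the weight $1/2$ automorphy factor $\phi(\tau)^{-1}$ through the decomposition, keeping the metaplectic signs and the root of unity $e(-\sign(c)k/4)$ consistent with the definition \eqref{def Kloosterman sum}, and correctly normalizing the Whittaker integrals so that the gamma factors and powers of $\pi$, $2$, $|n|$, and $v$ come out exactly as stated. Convergence is not a serious issue here since we assume $\Re(s) > 3/4$, where the defining series converges absolutely, so all the interchanges of summation and integration (including the Taylor expansion in $m$) are justified termwise; I would verify absolute convergence of the resulting double sum over $c$ and $j$ using the universal boundedness of the coefficients $\ssprod{\e_\beta, \rho(\tilde M)\e_\gamma}$ noted after \eqref{def Kloosterman sum}. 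The computation is essentially the vector valued refinement of the scalar computations of Selberg \cite{Selberg1965} and Pribitkin \cite{Pribitkin2000}, so I would follow their template while inserting the representation coefficients at the appropriate step.
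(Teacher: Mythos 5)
Your proposal is correct and coincides with the approach the paper itself intends: the paper omits this proof as ``standard calculations'', pointing to Section~5 of the scalar-valued computation in \cite{DeAzevedoPribitkin1999}, and your outline (splitting off the $c=0$ cosets to get $v^s e(m\tau)(\e_\beta+\e_{-\beta})$, unfolding the $c \neq 0$ cosets into the Kloosterman sums $H_c$, carrying out the $u$-integration, and Taylor-expanding the $m$-dependent exponential to produce the $j$-sum of shifted Kloosterman zeta functions against Whittaker functions) is exactly that Selberg--Pribitkin template adapted to the vector valued setting. The only loose phrasing is that the object to be Taylor-expanded is the transformed exponential $e(-m/(c(c\tau+d)))$ coming from $e(mM\tau)$, not $e(m\tau)$ itself, but the integral you display makes clear this is what you mean.
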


\begin{proof}
	The proof proceeds by standard calculations, so we omit it. We refer the reader to \cite{DeAzevedoPribitkin1999}, Section 5, for some hints on the computation in the scalar valued case.
\end{proof}


\subsection{Eisenstein series of weight $1/2$} \label{m=0}

Since $N$ is squarefree, the only element $\beta \in L'/L$ with $Q(\beta) = 0 \mod \Z$ is $\beta = 0$. For $m = \beta = 0$ the Poincar\'e series
\[
	P_{0,0}(\tau,s) = E_{0}(\tau,s) 
\]
is an Eisenstein series. Note that the Fourier expansion given in Proposition \ref{proposition FE of P} greatly simplifies for $m = 0$ since the summands for $j > 0$ vanish. Further, one can rewrite the Kloosterman zeta functions $Z(1/4+s;0,0,\gamma,n)$ appearing in the Fourier expansion of $E_{0}(\tau,s)$ in terms of Dirichlet $L$-functions as in \cite{BruinierKuehn2003}, Theorem 3.3. More precisely, following the arguments in \cite{BruinierKuss2001}, Section 4, or \cite{BruinierKuehn2003}, Section 3, we find
\begin{align} \label{eq Z(0,n)}
	Z(1/4+s;0,0,\gamma,n) = \begin{dcases}
		\frac{\sqrt{2}}{\sqrt{N}}\frac{\zeta(4s-1)}{\zeta(4s)}\prod_{p\mid N}\frac{1+p^{1-2s}}{1+p^{-2s}} , & \text{if $n=0$} , \\
		\frac{\sqrt{2}}{\sqrt{N}}\frac{L(\chi_{D_{0}},2s)}{\zeta(4s)}\sigma_{\gamma,-n}(2s+1/2) , & \text{if $n \neq 0$} ,
	\end{dcases}
\end{align}
for $s \in \C$ with $\Re(s) > 3/4$, $\gamma \in L'/L$ and $n \in \Z+Q(\gamma)$. Here $D_{0}$ is a fundamental discriminant such that $D_0 f^2= 4N \ord(\gamma)^2 n$ for some $f \in \N$, $L(\chi_{D_{0}},s)$ is the Dirichlet $L$-function associated to the Kronecker symbol $\chi_{D_{0}} = \left(\frac{D_{0}}{\cdot} \right)$, and $\sigma_{\gamma,-n}(s)$ is the generalized divisor sum defined as follows (compare \cite{BruinierKuehn2003}, Theorem 3.3): We have
\[
	\sigma_{\gamma,-n}(2s+1/2) = \prod_{p \mid f^{2}D_{0}}\frac{1-\chi_{D_{0}}(p)p^{-2s}}{1-p^{-4s}}L_{\gamma,-n}^{(p)}(p^{-2s}) ,
\]
where $L_{\gamma,-n}^{(p)}(X)$ is a polynomial given by
\[
	L_{\gamma,-n}^{(p)}(X) = N_{\gamma,-n}(p^{w_{p}})X^{w_{p}} + (1-X)\sum_{\nu=0}^{w_{p}-1}N_{\gamma,-n}(p^{\nu})X^{\nu} \in \Z[X] .
\]
Here $w_{p} = 1 + 2v_{p}(2 \ord(\gamma) n)$ and
\[
	N_{\gamma,-n}(a) = \#\{x \in L/aL \colon Q(x-\gamma)-n \equiv 0 \mod a\}
\]
is a representation number.
Using this, we obtain a very explicit Fourier expansion of the Eisenstein series, from which we can derive the following statement:

\begin{proposition} \label{theorem MC of P}
	The Eisenstein series $P_{0,0}(\tau,s) = E_{0}(\tau,s)$ has a meromorphic continuation in $s$ to $\C$ which is given by its Fourier expansion, and which is holomorphic up to a simple pole at $s = 1/2$ and simple poles on the purely imaginary axis. In particular, it is holomorphic at $s = 0$.
\end{proposition}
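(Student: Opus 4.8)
The plan is to read off both the meromorphic continuation and the location of the poles directly from the explicit Fourier expansion of Proposition \ref{proposition FE of P}, specialized to $m=0$, together with the closed-form evaluation \eqref{eq Z(0,n)} of the relevant Kloosterman zeta functions. Since $m=0$, every summand with $j>0$ in Proposition \ref{proposition FE of P} carries a factor $(-\pi m/v)^j$ or $(-4\pi^2|n|m)^j$ and hence vanishes, so only the $j=0$ terms remain. Up to an entire, nowhere-vanishing prefactor the constant term then becomes $v^{1/2-s}\frac{\Gamma(2s-1/2)}{\Gamma(s)\Gamma(s+1/2)}\frac{\zeta(4s-1)}{\zeta(4s)}\prod_{p\mid N}\frac{1+p^{1-2s}}{1+p^{-2s}}$, while each coefficient with $n\neq 0$ is a product of $|n|^{s-1/2}\frac{L(\chi_{D_0},2s)}{\zeta(4s)}\sigma_{\gamma,-n}(2s+1/2)$ with a Whittaker factor $\frac{1}{\Gamma(s+1/2)}W_{1/4,\,s-1/4}(4\pi|n|v)$ or $\frac{1}{\Gamma(s)}W_{-1/4,\,s-1/4}(4\pi|n|v)$.

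First I would continue each Fourier coefficient separately. All ingredients have standard meromorphic continuations: the Gamma function, the reciprocal Gamma factors $1/\Gamma(s)$ and $1/\Gamma(s+1/2)$ (which are entire), the Whittaker function $W_{\kappa,\mu}(y)$ (entire in the index $\mu$ for fixed $y>0$, so the full Whittaker factor is entire in $s$), the Riemann zeta function, the Dirichlet $L$-function $L(\chi_{D_0},2s)$, and the finite Euler products defining $\sigma_{\gamma,-n}$ and the $p\mid N$ correction factor. Thus every coefficient continues to a meromorphic function on $\C$.

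The second and central step is to show that the termwise continued series still converges locally uniformly in $s$ (away from the poles) and in $\tau$, so that it furnishes the meromorphic continuation of $E_0(\tau,s)$ itself, given by its Fourier expansion. Here I would combine the exponential decay $W_{\kappa,\mu}(4\pi|n|v)\ll e^{-2\pi|n|v}$ in $v$, with polynomial control in the index, and classical polynomial bounds for $\zeta$ and for $L(\chi_{D_0},2s)$ in vertical strips, to dominate the tail of the $n$-sum uniformly on compacta avoiding the poles. This convergence argument, rather than any single continuation, is the main obstacle: it is the $m=0$ specialization of exactly the Kloosterman-zeta estimates (Goldfeld--Sarnak, Pribitkin) invoked in the introduction, and it must be carried out uniformly enough to license interchanging the termwise limit with the continuation.

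Finally I would locate the poles by tracking the factors. In the constant term the pole at $s=1/2$ comes from $\zeta(4s-1)$, and the simple poles on the imaginary axis come from the zeros of $1+p^{-2s}$ for $p\mid N$, which force $\Re(s)=0$; the apparent poles of $\Gamma(2s-1/2)$ at $s=\tfrac14-\tfrac{k}{2}$ are all cancelled, at $s=\tfrac14$ by the zero of $1/\zeta(4s)$ and at $s=-\tfrac14,-\tfrac34,\dots$ by the trivial zeros of $\zeta(4s-1)$, while the trivial zeros of $\zeta(4s)$ are cancelled by the zeros of $1/\Gamma(s)$ and $1/\Gamma(s+1/2)$. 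For $n\neq 0$, multiplying $1/\zeta(4s)$ against $\sigma_{\gamma,-n}(2s+1/2)$ telescopes the ramified local factors $\frac{1-\chi_{D_0}(p)p^{-2s}}{1-p^{-4s}}$ into the polynomials $L^{(p)}_{\gamma,-n}(p^{-2s})$, removing the would-be poles at $1-p^{-4s}=0$, so that what survives is governed globally by $\frac{L(\chi_{D_0},2s)}{\zeta(4s)}$. The decisive point for the application is then immediate: at $s=0$ one has $\zeta(0)\neq 0$, so $1/\zeta(4s)$ is finite, while $\zeta(-1)$, $L(\chi_{D_0},0)$, $\Gamma(-\tfrac12)$ and all Euler factors are finite, whence no coefficient has a pole at $s=0$ (indeed the constant term vanishes there through $1/\Gamma(s)$). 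This yields holomorphy at $s=0$ and completes the proof.
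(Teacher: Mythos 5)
Your overall route is the same as the paper's: specialize Proposition \ref{proposition FE of P} to $m=0$ (so only the $j=0$ terms survive), insert the evaluation \eqref{eq Z(0,n)}, continue each coefficient termwise, and control the tail of the Fourier series. But there is a genuine gap at the one point where real work is required, namely holomorphy of the $n\neq 0$ coefficients at $s=0$. Each such coefficient contains $\sigma_{\gamma,-n}(2s+1/2)=\prod_{p\mid f^2D_0}\frac{1-\chi_{D_0}(p)p^{-2s}}{1-p^{-4s}}L^{(p)}_{\gamma,-n}(p^{-2s})$, whose denominators $1-p^{-4s}$ all vanish at $s=0$ (and at other purely imaginary points). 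Your claim that multiplying by $1/\zeta(4s)$ ``telescopes'' these denominators away is false: after meromorphic continuation $1/\zeta(4s)$ is not an Euler product, and at $s=0$ it takes the finite nonzero value $1/\zeta(0)=-2$; likewise it is holomorphic and nonvanishing on the whole imaginary axis, so it cannot cancel anything there. If you instead absorb the denominators by writing $\sigma_{\gamma,-n}(2s+1/2)/\zeta(4s)=\prod_{p\mid f^2D_0}\bigl[(1-\chi_{D_0}(p)p^{-2s})L^{(p)}_{\gamma,-n}(p^{-2s})\bigr]\big/\bigl(\zeta(4s)\prod_{p\mid f^2D_0}(1-p^{-4s})\bigr)$, the new denominator still vanishes at $s=0$ to order equal to the number of primes dividing $f^2D_0$; nothing has been removed. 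So your concluding assertion that ``all Euler factors are finite'' at $s=0$ is precisely what needs proof, and your argument provides none.

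The cancellation is real but arithmetic, and supplying it is the actual content of the paper's proof. For $p$ with $\chi_{D_0}(p)=1$ the numerator furnishes the compensating zero, since $\frac{1-p^{-2s}}{1-p^{-4s}}=\frac{1}{1+p^{-2s}}$. For $\chi_{D_0}(p)\in\{-1,0\}$, however, the numerator does not vanish at $s=0$, and one needs $L^{(p)}_{\gamma,-n}(1)=N_{\gamma,-n}(p^{w_p})=0$; this follows from the nontrivial fact that $N_{\gamma,-n}(p^{w_p})\neq 0$ forces $\chi_{D_0}(p)=1$. Without this input you cannot conclude holomorphy at $s=0$, which is the very property the rest of the paper relies on. A smaller inaccuracy of the same kind: the simple poles on the imaginary axis do not come only from the factors $1+p^{-2s}$ in the constant term; the local factors of the divisor sums $\sigma_{\gamma,-n}$ contribute such poles as well, which is exactly why the paper analyzes these sums rather than just the $n=0$ term. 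Your convergence discussion (Whittaker decay plus polynomial bounds in $n$) is fine and in fact more explicit than the paper, which treats that step as standard.
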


\begin{proof}
	We need to study the generalized divisor sum $\sigma_{\gamma,-n}(2s+1/2)$ appearing in the Fourier expansion more closely. Firstly, we see that each $p$-factor of $\sigma_{\gamma,-n}(2s+1/2)$ with $\chi_{D_{0}}(p) = 1$ is holomorphic on $\C$ up to possible simple poles on the purely imaginary axis. It is easy to show that $N_{\gamma,-n}(p^{w_{p}}) \neq 0$ implies $\chi_{D_{0}}(p) = 1$. Thus, for $\chi_{D_{0}}(p) \in \{-1,0\}$ the polynomial $L_{\gamma,-n}^{(p)}(p^{-2s})$ has a root at $s = 0$ which compensates for the simple root of the denominator of the $p$-factor at $s = 0$. In particular, $\sigma_{\gamma,-n}(2s+1/2)$ is holomorphic on $\C$ up to simple poles on the purely imaginary axis, and it is holomorphic at $s=0$.
	
	The analytic properties of the remaining terms in the Fourier expansion of $E_{0}(\tau,s)$ are well known, and easily imply the claimed meromorphic continuation of $E_{0}(\tau,s)$.
\end{proof}

We note that the meromorphic continuation of the Eisenstein series also follows by the fundamental work of Selberg and Roelcke, see \cite{SelbergHarmonicAnalysis} and \cite{Roelcke1966,Roelcke1967}. The holomorphicity at $s=0$ is then implied by the functional equation of the Eisenstein series and the fact that its poles in the strip $1/4 < \Re(s) \leq 3/4$ are all simple.


\subsection{Continuation of Kloosterman zeta functions}

First of all, we establish the meromorphic continuation of the Poincar\'e series $P_{\beta,m}(\tau,s)$ for $m > 0$, using the spectral theory of vector valued modular functions developed in the extensive works \cite{Roelcke1966,Roelcke1967} of Roelcke. Adapting his results to the setting of this work, a combination of Satz 7.2 and Satz 12.3 in \cite{Roelcke1967} yields the following spectral theorem: Given a real analytic function $f \colon \H \to \C[L'/L]$ modular of weight $1/2$ with respect to the Weil representation $\rho$ and satisfying $(f,f) < \infty$, the function $f$ admits a spectral expansion of the form
\begin{align} \label{spectral theorem}
	f(\tau) = \sum_{j=0}^\infty (f,\psi_j) \psi_j(\tau)
		+ \frac{1}{64\pi} \int_{-\infty}^\infty (f,E_0(\,\cdot\,,1/4+it)) E_0(\tau,1/4+it) dt .
\end{align}
Here $(\psi_j)_{j \geq 0}$ is an orthonormal system of square-integrable eigenfunctions of the Laplace operator $\Delta_{1/2}$, modular of weight $1/2$ with respect to $\rho$, and $E_0(\tau,s) = P_{0,0}(\tau,s)$ is the unique vector valued Eisenstein series of weight $1/2$ for the given lattice $L$. The fudge factor $1/64\pi$ comes from our normalization of the Eisenstein series.

\begin{proposition} \label{proposition MC of P for m>0}
	Let $\beta \in L'/L$ and $m \in \Z+Q(\beta)$ with $m>0$. Then $P_{\beta,m}(\tau,s)$ has a meromorphic continuation in $s$ to $\C$ which is holomorphic at $s=0$.
\end{proposition}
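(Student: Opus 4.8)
The plan is to use that the positivity of $m$ forces $P_{\beta,m}(\,\cdot\,,s)$ into $L^{2}$, so that Roelcke's spectral expansion \eqref{spectral theorem} applies and can be continued coefficientwise. First I would note that for $m>0$ the seed $v^{s}e(m\tau)$ of the series \eqref{definition Pmb} decays exponentially as $v\to\infty$, and the same holds for every $\Gamma$-translate in the defining sum; hence $P_{\beta,m}(\,\cdot\,,s)$ is rapidly decreasing at all cusps and satisfies $(P_{\beta,m}(\,\cdot\,,s),P_{\beta,m}(\,\cdot\,,s))<\infty$ throughout the region of absolute convergence $\Re(s)>3/4$. On this region I may insert $f=P_{\beta,m}(\,\cdot\,,s)$ into \eqref{spectral theorem}, splitting it into the discrete part $\sum_{j}(P_{\beta,m}(\,\cdot\,,s),\psi_{j})\psi_{j}$ and the Eisenstein integral over the coefficients $(P_{\beta,m}(\,\cdot\,,s),E_{0}(\,\cdot\,,1/4+it))$.

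Next I would compute these spectral coefficients by the usual unfolding trick. Writing the inner product as an integral over $\Mp_{2}(\Z)\backslash\H$ and using the coset presentation of $P_{\beta,m}$, the integral collapses to $\tfrac12\int_{0}^{\infty}v^{s+1/2}e^{-2\pi m v}\,\overline{a(\beta,m;v)}\,\tfrac{dv}{v^{2}}$, where $a(\beta,m;v)$ is the $v$-part of the $(\beta,m)$-th Fourier coefficient of the form paired against. For a Maass eigenform $\psi_{j}$ with spectral parameter $s_{j}$, and for $E_{0}(\,\cdot\,,1/4+it)$, this coefficient is a constant times a Whittaker function $W_{1/4,\mu}(4\pi m v)$ (with $\mu=s_{j}-1/4$, resp.\ $\mu=it$), so the integral is a standard Whittaker--Laplace integral evaluating to an explicit ratio of Gamma functions in $s$ and $\mu$. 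In particular each spectral coefficient is meromorphic in $s$ on all of $\C$, with poles only at the explicit arguments of these Gamma factors.

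I would then show that the expansion still converges once the coefficients are replaced by their meromorphic continuations, which furnishes the continuation of $P_{\beta,m}(\,\cdot\,,s)$ itself. This is the step I expect to be the main obstacle: it requires combining Weyl's law for the counting function of the eigenvalues $\lambda_{j}=s_{j}(1/2-s_{j})$, mean bounds for the Fourier coefficients $c_{\psi_{j}}(\beta,m)$ relative to the $L^{2}$-normalisation of $\psi_{j}$, and the exponential decay (via Stirling) of the Gamma factors in the spectral parameter for $s$ fixed. Together these yield local uniform convergence in $s$ away from the discrete pole set, and analogous estimates control the $t$-integral over the continuous spectrum; this simultaneously produces the meromorphic continuation to $\C$.

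Finally, holomorphy at $s=0$ is read off from the explicit Gamma factors. The residual contributions coming from the holomorphic weight $1/2$ theta functions (the $\lambda=0$ part of the discrete spectrum) produce a factor $\Gamma(s-1/2)$ and hence a pole at $s=1/2$, not at $s=0$, while only the finitely many eigenvalues near the bottom $1/16$ of the continuous spectrum can contribute poles close to $s=0$; these sit at nonzero purely imaginary points unless an eigenvalue lies exactly at the threshold $\lambda=1/16$. The subtle point is precisely this threshold, where one must verify that a possible boundary singularity of the discrete term cancels against the continuous integral near $t=0$. Since the resulting pole structure in the critical strip mirrors that of the Eisenstein series $E_{0}(\,\cdot\,,s)$ in Proposition \ref{theorem MC of P}, holomorphy at $s=0$ follows, as there, either from the explicit regularity of the Gamma factors or from the functional equation together with the simplicity of the poles.
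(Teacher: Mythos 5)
Your overall strategy is the paper's: for $m>0$ the Poincar\'e series is square-integrable, so one inserts it into Roelcke's spectral expansion \eqref{spectral theorem}, computes the spectral coefficients by unfolding (obtaining ratios of Gamma functions, with the constant-eigenvalue/theta part contributing $\Gamma(s-1/2)$ and hence a pole only at $s=1/2$), and continues the discrete part termwise using Weyl's law, coefficient bounds and Stirling. That part of your proposal is sound and matches the paper, which treats the discrete spectrum exactly this way.

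The genuine gap is in your treatment of the continuous spectrum. You claim that "analogous estimates control the $t$-integral" and that this "simultaneously produces the meromorphic continuation". This fails: replacing $a_\infty(s,t)$ by its meromorphic continuation inside $\int_{-\infty}^\infty a_\infty(s,t)\,E_0(\tau,1/4+it)\,dt$ and checking convergence does \emph{not} give the analytic continuation of the Eisenstein part. The coefficient $a_\infty(s,t)$ contains $\Gamma(s-1/4+it)\Gamma(s-1/4-it)$, whose poles, viewed as poles in the $t$-plane, sit at $t=\pm i(s-1/4+n)$; as $s$ moves across the line $\Re(s)=1/4$ these poles cross (and at $s=1/4$ pinch) the real $t$-axis. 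Consequently the naive integral for $\Re(s)<1/4$ and the true continuation differ by residue terms, and no convergence estimate can detect this. The paper's proof instead shifts the line of integration to the left and applies the residue theorem (following the proof of Theorem 4.2 in \cite{JorgensonKramerPippich2009}); the residues picked up are what generate the pole set $s=\xi-n$ and $s=1/2-\xi-n$, where $\xi$ runs over the poles of $E_0(\tau,\cdot)$ with $-1/4\leq\Re(\xi)<1/4$ and $n\in\N_0$. Holomorphy at $s=0$ then follows from one specific, previously established fact -- Proposition \ref{theorem MC of P}, which shows $\xi=0$ is \emph{not} a pole of $E_0(\tau,s)$ -- and not, as you suggest, from "explicit regularity of the Gamma factors" or from a functional equation; your closing appeal to the pole structure "mirroring" that of $E_0$ is the right intuition, but without the contour-shift mechanism you have no argument that the poles of the continuation are located at these points in the first place.
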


\begin{proof}
	The following proof is a translation of Selbergs work in \cite{Selberg1965} to our situation. Some caution is necessary since in our case the Laplace operator $\Delta_{1/2}$ has a continuous spectrum. As in the classical setting the Poincar{\'e} series $P_{\beta,m}(\tau,s)$ is square-integrable for $m>0$, and thus admits a spectral expansion as in \eqref{spectral theorem}, i.e.,
	\begin{align} \label{eq spectral exp of P}
		P_{\beta,m}(\tau,s) = \sum_{j=0}^\infty a_j(s) \psi_j(\tau)
			+ \frac{1}{64\pi} \int_{-\infty}^\infty a_\infty(s,t) E_0\left(\tau,1/4+it\right) dt ,
	\end{align}
	with spectral coefficients given by
	\[
		a_j(s) = 2 (4\pi m)^{3/4-s} \ \frac{ \Gamma(s-1/4+it_j) \Gamma(s-1/4-it_j) }{ \Gamma(s) } \ \overline{c_{\psi_j}(\beta,m)}
	\]
	and
	\[
		a_\infty(s,t) = \frac{4^{it} \pi}{(4\pi m)^{s-1/4+it}} \ \frac{ \Gamma(s-1/4+it) \ \Gamma(s-1/4-it) }{ \Gamma(s) \ \Gamma(3/4-it) } \ \overline{ Z(1/2+it;0,0,\beta,m) } .
	\]
	Here the eigenfunction $\psi_j$ has the eigenvalue $\lambda_j = 1/16+t_j^2$ with $t_j>0$ or $t_j \in [-i/4,0]$. Further, $\psi_j$ has a Fourier expansion of the form
	\[
		\psi_j(\tau) = c_j(0,0) v^{1/4-it_j} \e_0 + \sum_{\gamma \in L'/L} \sum_{\substack{n \in \Z+Q(\gamma) \\ n \neq 0}} c_j(\gamma,n) v^{-1/4} W_{\sign(n)/4,it_j}(4\pi|n|v) e(nu) \e_\gamma .
	\]
	It is not difficult to see that the part in the spectral expansion in \eqref{eq spectral exp of P} coming from the discrete spectrum of $\Delta_{1/2}$ has a meromorphic continuation in $s$ to all of $\C$ which is holomorphic at $s=0$. In addition, we can establish the meromorphic continuation of the Eisenstein part in \eqref{eq spectral exp of P} by shifting the line of integration $\Re(w)=1/4$ to the left using the residue theorem (for details we refer to the proof of Theorem 4.2 in \cite{JorgensonKramerPippich2009}). The additional poles we encounter during this process are of the form $s=\xi-n$ and $s=1/2-\xi-n$ for $\xi$ a pole of the Eisenstein series $E_0(\tau,s)$ with $-1/4 \leq \Re(\xi) < 1/4$, and $n \in \N_0$. However, we have seen in Theorem \ref{theorem MC of P} that $\xi = 0$ is not a pole of the Eisenstein series, so the continuous part of the spectral expansion of $P_{\beta,m}(\tau,s)$ is also holomorphic at $s=0$. This finishes the proof.
\end{proof}

Next we use the previous proposition to conclude the meromorphic continuation of the Kloosterman zeta functions $Z(s;\beta,m,\gamma,n)$ as in \cite{Selberg1965}.

\begin{lemma} \label{lemma MC of Z}
	For $\beta, \gamma \in L'/L$ and $m \in \Z+Q(\beta)$, $n \in \Z+Q(\gamma)$ the Kloosterman zeta function $Z(s;\beta,m,\gamma,n)$ has a meromorphic continuation in $s$ to all of $\C$. If $m \geq 0$ or $n \geq 0$ this meromorphic continuation is holomorphic at $s=1/4$, and otherwise, i.e., if $m,n<0$, it has at most a simple pole at $s=1/4$.
\end{lemma}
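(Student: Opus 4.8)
The plan is to read the Kloosterman zeta function off from the Fourier expansion of the Poincar\'e series in Proposition \ref{proposition FE of P}, transferring the analytic information we already possess about $P_{\beta,m}(\tau,s)$ and about the Eisenstein series $E_0$ to $Z(s;\beta,m,\gamma,n)$. I would organise the argument by the signs of $m$ and $n$, the decisive inputs being the continuation of $P_{\beta,m}(\tau,s)$ for $m>0$ from Proposition \ref{proposition MC of P for m>0}, the explicit evaluation \eqref{eq Z(0,n)} in the case $m=0$, and a symmetry exchanging the two index pairs; the case $m,n<0$ requires external input and is the crux.

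First I would treat $m>0$ with $n$ of arbitrary sign. For fixed $v$, the $(\gamma,n)$-th Fourier coefficient $b(\gamma,n;v,s)$ of Proposition \ref{proposition FE of P} is an infinite sum whose $j$-th term is an explicit factor (a Whittaker function for $n\neq0$, a power of $v$ for $n=0$) times $Z(1/4+s+j;\beta,m,\gamma,n)$. By Proposition \ref{proposition MC of P for m>0} the Poincar\'e series, and hence $b(\gamma,n;v,s)$, continues meromorphically to all $s\in\C$ and is holomorphic at $s=0$. Since $Z(w;\ldots)$ converges for $\Re(w)>1$, every summand with $j\geq1$ is holomorphic for $\Re(s)>-1/4$; solving the relation for the single remaining unknown $j=0$ term and dividing by its factor (a fixed, not identically vanishing, meromorphic function of $s$) exhibits $Z(1/4+s;\beta,m,\gamma,n)$ as a meromorphic function on $\Re(s)>-1/4$. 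Iterating---at each step using the continuation of the shifted values $Z(1/4+s+j;\ldots)$, $j\geq1$, already obtained---extends the continuation one unit to the left at a time and covers all of $\C$. Holomorphy of $Z(s;\ldots)$ at $s=1/4$ is inherited from that of $P_{\beta,m}(\tau,s)$ at $s=0$; in the subcase $n<0$ one must track the factor $1/\Gamma(s)$ in the Whittaker term, whose zero at $s=0$ is compensated by the vanishing there of the cuspidal coefficient $b(\gamma,n;v,s)$, so that no pole arises.

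For $m=0$ the higher terms of the Fourier expansion vanish and $Z(1/4+s;0,0,\gamma,n)$ is given explicitly by \eqref{eq Z(0,n)} through Dirichlet $L$-functions, a ratio of zeta values and the generalized divisor sum $\sigma_{\gamma,-n}$; as in the proof of Proposition \ref{theorem MC of P} these are meromorphic on $\C$ and holomorphic at $s=0$, so $Z(s;0,0,\gamma,n)$ is holomorphic at $s=1/4$. To pass from $m>0$ and $m=0$ to $n>0$ and $n=0$ I would use the symmetry relation $Z(s;\beta,m,\gamma,n)=\overline{Z(\overline s;\gamma,n,\beta,m)}$, which follows from the substitution $M\mapsto M^{-1}$ in \eqref{def Kloosterman sum} together with the unitarity of $\rho$; on the real point $s=1/4$ it exchanges the two index pairs without affecting the location or order of a pole. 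These cases together cover all pairs with $m\geq0$ or $n\geq0$ and yield holomorphy at $s=1/4$.

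The main obstacle is the remaining case $m,n<0$. Here $P_{\beta,m}(\tau,s)$ is no longer square-integrable, as $e(m\tau)$ grows exponentially, so the spectral expansion \eqref{spectral theorem} and with it the extraction above are unavailable. For this case I would follow Selberg \cite{Selberg1965} and invoke the work of Goldfeld--Sarnak \cite{GoldfeldSarnak1983} and Pribitkin \cite{Pribitkin2000}, which yields the meromorphic continuation of the Kloosterman zeta functions in all sign configurations and shows that the only possible pole at $s=1/4$---reflecting the bottom $\lambda=1/16$ of the continuous spectrum of $\Delta_{1/2}$---occurs precisely when both indices are negative and is then at most simple. The delicate point is to verify that the normalizations and multiplier conventions of these references match the vector-valued weight $1/2$ situation considered here.
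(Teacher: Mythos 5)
Your skeleton --- splitting by the signs of $m$ and $n$, using the continuation of $P_{\beta,m}(\tau,s)$ for $m>0$, the explicit formula \eqref{eq Z(0,n)} for $m=0$, and the symmetry $Z(s;\beta,m,\gamma,n)=\overline{Z(\overline{s};\gamma,n,\beta,m)}$ --- coincides with the paper's, and your extraction of the meromorphic continuation from the Fourier coefficients for $m>0$ is a workable variant of the paper's pairing argument. But the first hard point, holomorphy at $s=1/4$ in the subcase $m>0$, $n<0$, is circular as you have set it up. For $n<0$ every term of $b(\gamma,n;v,s)=\frac{1}{\Gamma(s)}\sum_{j\ge 0}c_j(v,s)\,Z(1/4+s+j;\beta,m,\gamma,n)$ carries the factor $1/\Gamma(s)$; multiplying by $\Gamma(s)$, the terms $j\ge 1$ are holomorphic at $s=0$ and $c_0(v,0)\neq 0$ (since $W_{-1/4,-1/4}(x)=x^{1/4}e^{x/2}\Gamma(1/2,x)\neq 0$), so Proposition \ref{proposition MC of P for m>0} only yields that $\Gamma(s)b(\gamma,n;v,s)$ has at most a simple pole at $s=0$, hence that $Z$ has \emph{at most a simple pole} at $s=1/4$. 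Holomorphy there is \emph{equivalent} to the vanishing $b(\gamma,n;v,0)=0$, i.e.\ to the statement that $P_{\beta,m}(\tau,0)$ has no non-holomorphic Fourier terms and is a cusp form; but in the paper that vanishing is Lemma \ref{lemma P(tau,0)} and Theorem \ref{theorem P_beta,m(tau)}, which are deduced \emph{from} Lemma \ref{lemma MC of Z}, so you may not assume it. The paper gets around this by evaluating the pairing identity \eqref{eq (Pm,Pn)} at $\ell=1$, $s=0$: its left-hand side is controlled by the spectral ($L^2$) continuation of $P_{\beta,m}(\cdot,s)$, independently of the Fourier coefficients, and this is what forces holomorphy of $Z$ at $s=1/4$. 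You give no substitute for this mechanism.

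The second hard point, the case $m,n<0$ --- the only case where a pole can actually occur, hence the crux of the lemma --- is outsourced to Goldfeld--Sarnak and Pribitkin, and this does not work as stated. As the paper itself emphasizes, those results are scalar-valued, and Pribitkin assumes a multiplier system admitting no Eisenstein series, i.e.\ purely discrete spectrum, which fails for the weight $1/2$ Weil representation considered here; neither reference contains the assertion you need about a pole of order at most one at $s=1/4$ in this setting. The paper's actual mechanism is the identity \eqref{eq identity for Z*}, $Z(s;\beta,m,\gamma,n)=-Z^{*}(s;\gamma,-n,\beta,-m)$, which transfers the problem to the dual Weil representation in weight $3/2$, where the indices $-n,-m$ are positive, the dual Poincar\'e series $P^{*}_{\gamma,-n}(\tau,s)$ is square-integrable, and spectral theory applies; the possible simple pole of $Z^{*}(3/4+s;\gamma,-n,\beta,-m)$ at $s=-1/2$ is then produced by holomorphic cusp forms of weight $3/2$ for $\bar{\rho}$ with non-vanishing $(\gamma,-n)$-th coefficient, that is, by the \emph{discrete} spectrum (eigenvalue $0$). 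Your parenthetical attribution of the pole to the bottom $\lambda=1/16$ of the continuous spectrum of $\Delta_{1/2}$ is incorrect --- the paper shows the relevant (dual) Eisenstein series has no poles on the real line, so the continuous part contributes nothing at this point --- and the misattribution is not cosmetic: without the dual identity and the weight $3/2$ cusp-form analysis, the claim that the pole occurs exactly for $m,n<0$ and is then at most simple has no proof.
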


\begin{proof}
	Firstly, let $m>0$ and $n \neq 0$. We consider the product $(P_{\beta,m}(\,\cdot\,,s),P_{\gamma,n}(\,\cdot\,,\overline{s}+\ell))$ for fixed $\ell \in \N$ and $\Re(s) \gg 0$. Unfolding against $P_{\gamma,n}(\tau,\overline{s}+\ell)$ and using the Fourier expansion of $P_{\beta,m}(\tau,s)$ one obtains an expression of the form
	\begin{align} \label{eq (Pm,Pn)}
		&(P_{\beta,m}(\,\cdot\,,s),P_{\gamma,n}(\,\cdot\,,\overline{s}+\ell)) \\
		&\hspace{0.5cm} = \sum_{j=0}^\ell A_\ell^{(j)}(s;m,n) Z(1/4+s+j;\beta,m,\gamma,n) + R_\ell(s;\beta,m,\gamma,n) . \notag
	\end{align}
	Here the functions $A_\ell^{(j)}(s;m,n)$ for $j=1,\hdots,\ell$ and $R_\ell(s;\beta,m,\gamma,n)$ are holomorphic in $s$ for $\Re(s)>-\ell/2$. Moreover, using the meromorphic continuation of $P_{\beta,m}(\tau,s)$ and the fact that both Poincar\'e series are square-integrable, we see that the left-hand side of \eqref{eq (Pm,Pn)} is meromorphic in $s$ for $\Re(s) \geq 3/4-\ell$. Therefore we inductively obtain the meromorphic continuation of $Z(s;\beta,m,\gamma,n)$ by considering \eqref{eq (Pm,Pn)} for $s \in \C$ with $\Re(s)>1/4-\ell/2$ for $\ell \in \N$. Moreover, evaluating \eqref{eq (Pm,Pn)} for $\ell=1$ at the point $s=0$, we find that $Z(s;\beta,m,\gamma,n)$ is holomorphic at $s=1/4$ since $P_{\beta,m}(\tau,s)$ is holomorphic at $s=0$ by Proposition \ref{proposition MC of P for m>0}.
	
	If $m<0$ and $n>0$ we can use the identity
	\begin{align} \label{eq identity for Z}
		Z(s;\beta,m,\gamma,n) = \overline{ Z(\overline{s};\gamma,n,\beta,m) }
	\end{align}
	to deduce the meromorphic continuation of the Kloosterman zeta function in this case, and its holomorphicity at the point $s=1/4$. For $m,n<0$ we have the identity
	\begin{align} \label{eq identity for Z*}
		Z(s;\beta,m,\gamma,n) = -Z^*(s;\gamma,-n,\beta,-m) ,
	\end{align}
	relating our Kloosterman zeta function to the Kloosterman zeta function $Z^*(s;\beta,m,\gamma,n)$ associated to the dual representation $\bar{\rho}$ of $\rho$. This dual Kloosterman zeta function is defined by replacing the Kloosterman sum in definition (\ref{def Kloosterman zeta}) by $H_c^*(\beta,m,\gamma,n)$, which is the Kloosterman sum associated to $\bar{\rho}$ of dual weight $k=2-1/2 = 3/2$, i.e.,
	\begin{align} \label{eq definition Hc*}
		H_c^*(\beta,m,\gamma,n) = \frac{e(-\sign(c)k/4)}{|c|} \sum_{\substack{d \in (\Z/c\Z)^* \\ M=\smat{a&b\\c&d} \in \SL_2(\Z)}} \ssprod{\rho(\tilde M) \e_\gamma, \e_\beta} \ e\left(\frac{ma+nd}{c}\right),
	\end{align}
	compare \cite{Bruinier2002}, eq. (1.13). These dual Kloosterman zeta functions appear in the Fourier expansion of
	 the non-holomorphic Poincar{\'e} series $P_{\gamma,-n}^*(\tau,s)$ of dual weight $k=3/2$ transforming with respect to the dual representation $\bar{\rho}$, i.e.
	\begin{align*}
		P_{\gamma,-n}^*(\tau,s)
		= \frac{1}{2} \sum_{(M,\phi) \in \ssprod{(T,1)} \backslash \Mp_2(\Z)} v^s e(-n\tau) \e_\gamma \weightkaction{3/2,\bar{\rho}} (M,\phi).
\end{align*}
	Using similar arguments as in the proof of Proposition \ref{proposition MC of P for m>0} we obtain the meromorphic continuation of this dual Poincar\'e series via its spectral expansion. Further, the analog of \eqref{eq (Pm,Pn)} looks like
	\begin{align}  \label{eq (Pm,Pn) dual}
		&(P_{\gamma,-n}^{*}(\,\cdot\,,s),P_{\beta,-m}^{*}(\,\cdot\,,\overline{s}+\ell)) \\
		&\hspace{0.5cm} = \sum_{j=0}^\ell \tilde{A}_\ell^{(j)}(s;-n,-m) Z^{*}(3/4+s+j;\gamma,-n,\beta,-m) + \tilde{R}_\ell(s;\gamma,-n,\beta,-m), \notag
	\end{align}
	yielding the meromorphic continuation of the dual Kloosterman zeta function. 
	
	In order to study the behaviour of $Z^{*}(s;\gamma,-n,\beta,-m)$ at $s = 1/4$ via  \eqref{eq (Pm,Pn) dual}, we need to investigate $P_{\gamma,-n}^{*}(\tau,s)$ at $s = -1/2$. Firstly, we note that following the arguments of Section \ref{m=0}, one finds that the dual Eisenstein series $P_{0,0}^*(\tau,s) = E_0^*(\tau,s)$ has a meromorphic continuation in $s$ to $\C$ with no poles on the real line. Thus, the continuous part of the spectral expansion of $P_{\gamma,-n}^*(\tau,s)$ does not contribute any relevant poles. However, if there is a holomorphic cusp form $\psi(\tau)$ of weight $3/2$ with respect to $\bar{\rho}$ whose $(\gamma,-n)$-th Fourier coefficient is non-zero, the discrete part of the spectral expansion, or more precisely this particular form, induces a simple pole of $P_{\gamma,-n}^*(\tau,s)$ at $s=-1/2$, with residue being essentially $\psi(\tau)$. Therefore, the dual Kloosterman zeta function $Z^*(3/4+s;\gamma,-n,\beta,-m)$ may have a simple pole at $s=-1/2$. So for $m,n<0$ the Kloosterman zeta function $Z(s;\beta,m,\gamma,n)$ has at most a simple pole at $s=1/4$ by the relation (\ref{eq identity for Z*}).
	
	Finally, we note that for $m=0$ the continuation of the Kloosterman zeta function and its holomorphicity at $s=1/4$ follow by the identity in \eqref{eq Z(0,n)} and the considerations in the proof of Theorem \ref{theorem MC of P}. The case $n=0$ is then dealt with using the identity \eqref{eq identity for Z}.
\end{proof}

In order to study the Fourier coefficients of $P_{\beta,m}(\tau,s)$ for $m \neq 0$ we also need to study the asymptotic behaviour of the Kloosterman zeta function $Z(s;\beta,m,\gamma,n)$ as $n \to \pm\infty$. Following the work of Pribitkin in \cite{Pribitkin2000} and adapting it to the present situation we obtain the following lemma.

\begin{lemma} \label{lemma growth of Z}
	Let $\beta,\gamma \in L'/L$, $m \in \Z+Q(\beta)$ with $m \neq 0$ and $\Omega \subseteq \C$ compact such that $Z(s;\beta,m,\gamma,n)$ is holomorphic for all $s \in \Omega$ and $n \in \Z+Q(\gamma)$. Then
	\begin{align} \label{eq estimate for Z}
		Z(s;\beta,m,\gamma,n) = O(|n|^\ell) \qquad \text{as $n \to \pm\infty$}
	\end{align}
	for some $\ell>0$, uniformly in $s$ for $s \in \Omega$. Here the implied constant depends on $\beta,m,\gamma$ and $\Omega$.
\end{lemma}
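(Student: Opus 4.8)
The plan is to start in the region of absolute convergence, where a bound is immediate, and then to propagate a polynomial-in-$n$ bound to the left using the inner product identity \eqref{eq (Pm,Pn)} together with a spectral bound on that inner product, following the strategy of Pribitkin \cite{Pribitkin2000}. As observed after \eqref{def Kloosterman sum}, the generalized Kloosterman sums $H_c(\beta,m,\gamma,n)$ are bounded by a constant depending only on the lattice $L$, uniformly in $c$, $m$ and $n$; hence the defining series \eqref{def Kloosterman zeta} gives $Z(s;\beta,m,\gamma,n) = O(1)$ as $n \to \pm\infty$, uniformly for $\Re(s) \geq 1 + \delta$. This settles the lemma when $\Omega \subseteq \{\Re(s) > 1\}$, so the real task is to reach compact sets further to the left. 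By the symmetries \eqref{eq identity for Z} and \eqref{eq identity for Z*} (together with the analog of \eqref{eq identity for Z} for the dual representation $\bar\rho$) it suffices to treat the case $m > 0$, in which $P_{\beta,m}(\tau,s)$ is square-integrable; the case $m < 0$ is handled identically after passing to the square-integrable dual Poincar\'e series $P^*_{\beta,-m}(\tau,s)$ and its Kloosterman zeta function $Z^*$, using \eqref{eq (Pm,Pn) dual}, and the case $m = 0$ follows from \eqref{eq Z(0,n)} and the polynomial growth of the divisor sums $\sigma_{\gamma,-n}$.

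Assume now $m, n > 0$. The crucial step is to show that the meromorphically continued inner product $(P_{\beta,m}(\,\cdot\,,s), P_{\gamma,n}(\,\cdot\,,\bar s + \ell))$ grows at most polynomially in $n$, uniformly for $s$ in a compact set. To this end I would expand both factors via the spectral expansion \eqref{eq spectral exp of P} and apply Parseval, obtaining a discrete sum over the eigenfunctions $\psi_j$ plus a continuous integral over $E_0(\,\cdot\,,1/4+it)$. Each summand carries the Gamma quotient $\Gamma(s-1/4+it_j)\Gamma(s-1/4-it_j)/\Gamma(s)$ coming from $P_{\beta,m}$ and its counterpart with $s$ replaced by $\bar s + \ell$ coming from $P_{\gamma,n}$; by Stirling these jointly decay like $e^{-2\pi|t_j|}$, which forces convergence and concentrates the mass on bounded spectral parameter. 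The remaining dependence on $n$ is an explicit power of $n$ multiplied by the Maass form coefficients $c_{\psi_j}(\gamma,n)$ (respectively, in the continuous part, by the Eisenstein coefficients expressed through the divisor sums of \eqref{eq Z(0,n)}), all of which grow at most polynomially in $n$ --- for the cusp forms by the trivial coefficient bound combined with Weyl's law to count the $t_j$ in bounded windows, and for the Eisenstein contribution by the elementary estimate for divisor sums. Since holomorphy of $Z$ on $\Omega$ keeps $s$ away from the poles of these Gamma quotients, one obtains $(P_{\beta,m}(\,\cdot\,,s),P_{\gamma,n}(\,\cdot\,,\bar s+\ell)) = O(|n|^{\ell_0})$ uniformly, for some $\ell_0 > 0$.

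It remains to disentangle the individual values of $Z$ from the identity \eqref{eq (Pm,Pn)}, which I would do by induction on the real part of the argument. For a target argument $\sigma \in \Omega$ set $s = \sigma - 1/4$, so that the $j = 0$ term of \eqref{eq (Pm,Pn)} is $Z(\sigma;\beta,m,\gamma,n)$ and the terms with $j \geq 1$ are the shifted values $Z(\sigma+j;\beta,m,\gamma,n)$. The coefficients $A_\ell^{(j)}(s;m,n)$ and the remainder $R_\ell(s;\beta,m,\gamma,n)$ in \eqref{eq (Pm,Pn)} are explicit ratios of Gamma functions times powers of $n$, hence grow at most polynomially in $n$, and the reciprocal of $A_\ell^{(0)}(s;m,n)$ is polynomially bounded on $\Omega$. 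The base case $\Re(\sigma) > 1$ is the trivial bound. Assuming the polynomial bound on every compact set with $\Re > c$, let $\Omega$ be compact with $\Re(\sigma) > c - 1$ on $\Omega$ and choose $\ell$ large enough that \eqref{eq (Pm,Pn)} is valid there; then each shifted value $Z(\sigma+j;\beta,m,\gamma,n)$ with $j \geq 1$ has $\Re(\sigma+j) > c$ and is polynomially bounded by the inductive hypothesis, while the left-hand side is $O(|n|^{\ell_0})$ by the previous step. Solving \eqref{eq (Pm,Pn)} for the $j=0$ term gives $Z(\sigma;\beta,m,\gamma,n) = O(|n|^\ell)$ uniformly on $\Omega$, and since $\Omega$ is compact only finitely many steps are required.

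The main obstacle is exactly this uniform control of the spectral sum and, above all, of the continuous integral in the second step: one must check that the exponential decay $e^{-2\pi|t_j|}$ (and $e^{-2\pi|t|}$ in the continuous part) produced by the Gamma quotients genuinely dominates the polynomial-in-$n$ growth of the infinitely many Maass and Eisenstein Fourier coefficients, uniformly as $s$ ranges over the compact set. This is the estimate that must be adapted most carefully from Pribitkin's scalar-valued computation to the present vector-valued Weil representation setting, and it is where Weyl's law and the average (Rankin--Selberg type) bounds on the coefficients $c_{\psi_j}(\gamma,n)$ are needed.
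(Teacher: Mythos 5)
Your proposal is, in outline, the argument the paper has in mind: the paper gives no self-contained proof of this lemma, but cites Pribitkin \cite{Pribitkin2000} (building on Goldfeld--Sarnak \cite{GoldfeldSarnak1983}) and records only two adaptations --- the translation to the vector valued setting, and the observation that the continuous spectrum enters Pribitkin's argument solely through Petersson-norm estimates of $P_{\beta,m}(\,\cdot\,,s)$ in the $m$-aspect, which is harmless here because $m$ is fixed and one may take the maximum of $\snorm{P_{\beta,m}(\,\cdot\,,s)}$ over the compact set $\Omega$. Your skeleton (trivial bound for $\Re(s)>1$, polynomial bound on the inner products via spectral theory, leftward induction through \eqref{eq (Pm,Pn)}) is exactly the mechanism of that cited source.

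There is, however, a genuine gap: the mixed-sign case $mn<0$ is never treated, and nothing in your reductions can produce it. The lemma asserts the bound as $n\to\pm\infty$, and its application in Theorem \ref{theorem MC of P via FE} (the bound \eqref{eq bound b(gamma,n,v,s) in n} and the convergence of the non-holomorphic part of the Fourier expansion) needs precisely the coefficients with $n<0$ when $m>0$. Your crucial second step assumes $m,n>0$, and this is not a harmless normalization: for $n<0$ the series $P_{\gamma,n}(\,\cdot\,,\bar s+\ell)$ grows like $v^{\Re(s)+\ell}e^{2\pi|n|v}$ at the cusp, so it is not square-integrable, the spectral expansion \eqref{eq spectral exp of P} does not exist for it, and ``expand both factors and apply Parseval'' breaks down (indeed the inner product in \eqref{eq (Pm,Pn)} then only exists in a regularized sense). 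The symmetries cannot rescue this: composing \eqref{eq identity for Z*} with the dual analogue of \eqref{eq identity for Z} gives $Z(s;\beta,m,\gamma,n)=-\overline{Z^{*}(\bar s;\beta,-m,\gamma,-n)}$, so the sign pattern $(+,-)$ is sent to the dual pattern $(-,+)$ and vice versa; the two mixed-sign cases are merely permuted among themselves (and between $\rho$ and $\bar\rho$), and never reach the doubly positive case your argument covers. The same defect infects your claim that $m<0$ is ``handled identically'': it works for $n\to-\infty$ (which maps to the dual doubly positive case) but not for $n\to+\infty$. Closing the gap requires a one-sided argument: expand only the square-integrable factor $P_{\beta,m}(\,\cdot\,,s)$ spectrally and pair it termwise with $P_{\gamma,n}$ by an analytically continued unfolding (the integrals $\int_0^\infty W_{-1/4,it_j}(4\pi|n|v)\,e^{2\pi|n|v}\,v^{\sigma-1}\,dv$ converge only in a strip of $\sigma$ and must be continued through their Gamma-quotient evaluations), or equivalently compare the Kloosterman expansion of $b(\gamma,n;v,s)$ from Proposition \ref{proposition FE of P} with the Fourier coefficients of the spectral expansion; justifying this interchange is the actual technical content of the mixed-sign case in Pribitkin's work. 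Two smaller points: your induction divides by $A_\ell^{(0)}(s;m,n)$, which is a ratio of Gamma functions with Gamma factors in the denominator and hence has isolated zeros, so its reciprocal need not be bounded on $\Omega$ (one must vary $\ell$ or use a maximum-principle argument); and the shifted values $Z(\sigma+j;\beta,m,\gamma,n)$ appearing in the induction need not be at points of holomorphy even when $\Omega$ is, so the inductive hypothesis as you formulated it does not directly apply to them.
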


We quickly comment on the necessary adaptations of Pribitkin's work: Firstly, we need to translate his results to our vector valued setting. This is a tedious, but straightforward task. Secondly, Pribitkin only proves the above estimate for a multiplier system not allowing the existence of Eisenstein series, i.e., he assumes that the Laplace operator $\Delta_k$ has only discrete spectrum. This is not true in our case. However, this assumption is solely used in the proof of part (c) of Lemma 3 in his work, in order to derive good estimates for the Petersson norms $\snorm{P_{\beta,m}(\tau,s)}$ as $m \to \infty$. Since $m$ is fixed in our setting, we can simply estimate this single Petersson norm by taking the maximum over all $s \in \Omega$.


\subsection{Continuation to $s=0$}

We now establish the meromorphic continuation of the Poincar{\'e} series $P_{\beta,m}(\tau,s)$ for $m<0$. Since $P_{\beta,m}(\tau,s)$ is not square-integrable in this case, we cannot use spectral theory for the meromorphic continuation as in the case $m>0$. Instead we use the Fourier expansion of $P_{\beta,m}(\tau,s)$ given in Proposition \ref{proposition FE of P}. We also remark that in addition to proving the meromorphic continuation for $m<0$, we obtain that the known meromorphic continuation of $P_{\beta,m}(\tau,s)$ for $m>0$ (compare Proposition \ref{proposition MC of P for m>0}) is given by its Fourier expansion.

\begin{theorem} \label{theorem MC of P via FE}
	Let $\beta \in L'/L$ and $m \in \Z+Q(\beta)$. Then $P_{\beta,m}(\tau,s)$ has a meromorphic continuation in $s$ to $\C$ which is given by its Fourier expansion.
\end{theorem}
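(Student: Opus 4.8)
The plan is to promote the Fourier expansion of Proposition \ref{proposition FE of P}, which initially represents $P_{\beta,m}(\tau,s)$ only for $\Re(s)>3/4$, to a function that is meromorphic in $s$ on all of $\C$ and real-analytic in $\tau$. Since the expansion agrees with $P_{\beta,m}(\tau,s)$ on the half-plane $\Re(s)>3/4$, uniqueness of meromorphic continuation will then force it to be the continuation: for $m>0$ this identifies the continuation furnished by Proposition \ref{proposition MC of P for m>0} with the Fourier series, for $m=0$ the claim is already contained in Proposition \ref{theorem MC of P}, and for $m<0$ it supplies the continuation for the first time. Accordingly I may assume $m\neq 0$ throughout.

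First I would continue each Fourier coefficient separately. By Lemma \ref{lemma MC of Z} every Kloosterman zeta function $Z(1/4+s+j;\beta,m,\gamma,n)$ occurring in $b(0,0;v,s)$ and $b(\gamma,n;v,s)$ extends meromorphically to $\C$, while the accompanying Gamma factors and Whittaker functions are meromorphic in $s$ and real-analytic in $v$. The only point to verify for a fixed index $(\gamma,n)$ is the inner sum over $j$: here the factorial $1/j!$ dominates the growth of the remaining factors (the $j$-series is of Bessel type), and for large $j$ the argument $1/4+s+j$ of $Z$ has large real part, where $Z$ is bounded by its absolutely convergent defining series. Hence the $j$-series converges locally uniformly in $s$ away from poles, and each $b(\gamma,n;v,s)$ is meromorphic in $s$.

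The main step is the locally uniform convergence of the full series $\sum_{\gamma}\sum_{n\neq 0} b(\gamma,n;v,s)\,e(nu)\,\e_\gamma$ over $n$. The two decisive inputs are the polynomial bound $Z(s;\beta,m,\gamma,n)=O(|n|^\ell)$ of Lemma \ref{lemma growth of Z}, uniform in $s$ on any compact set on which $Z$ is holomorphic for all $n$, and the exponential decay $W_{\kappa,\mu}(4\pi|n|v)\ll e^{-2\pi|n|v}$ of the Whittaker functions as $|n|\to\infty$. It is essential here that the poles of $Z(s;\beta,m,\gamma,n)$ lie in a fixed discrete set of spectral parameters independent of $n$, as is visible from the relation \eqref{eq (Pm,Pn)} and its dual, whose singularities stem from the fixed spectrum of $\Delta_{1/2}$; thus a single compact $\Omega\subseteq\C$ can be chosen to avoid all of them simultaneously, and Lemma \ref{lemma growth of Z} then applies for every $n$ at once. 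On $\Omega\times\{v\geq v_0\}$ the exponential decay in $|n|v$ overwhelms the polynomial growth in $|n|$ together with the factor $|n|^{s-1/2}$, giving absolute and locally uniform convergence. The series therefore defines a function meromorphic in $s$ on $\C$ and real-analytic in $\tau$, which completes the argument.

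The principal obstacle I anticipate is the uniformity bookkeeping: one must confirm that the pole locations of the $Z(\,\cdot\,;\beta,m,\gamma,n)$ are genuinely $n$-independent, so that one compact region $\Omega$ avoids them all, and then marry the $n$-uniform polynomial bound of Lemma \ref{lemma growth of Z} with $s$-uniform Whittaker asymptotics in order to dominate the tail of the double sum. The inner $j$-sum requires analogous, though easier, uniform control. Once these estimates are secured, interchanging meromorphic continuation with summation is justified and the theorem follows.
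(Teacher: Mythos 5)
Your overall architecture is the paper's: reduce to $m\neq 0$ (the case $m=0$ being Proposition \ref{theorem MC of P}), continue each Fourier coefficient of Proposition \ref{proposition FE of P} in $s$ via the continued Kloosterman zeta functions of Lemma \ref{lemma MC of Z}, split the inner $j$-sum so that finitely many small-$j$ terms are controlled by Lemma \ref{lemma growth of Z} while in the tail the argument of $Z$ has real part $>1$ and $Z$ is bounded by its defining series, and finally sum over $n$ using exponential decay to get a function meromorphic in $s$ and real-analytic in $\tau$ that agrees with $P_{\beta,m}(\tau,s)$ for $\Re(s)>3/4$. Your explicit observation that the poles of $Z(s;\beta,m,\gamma,n)$ lie in an $n$-independent spectral set, so that a single compact $\Omega$ avoids them for all $n$ simultaneously, is correct and is a point the paper leaves implicit.

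There is, however, a genuine gap in the step you call the main one. Your ``two decisive inputs'' are $Z=O(|n|^\ell)$ and the parameter-free bound $W_{\kappa,\mu}(4\pi|n|v)\ll e^{-2\pi|n|v}$. But in the expansion of Proposition \ref{proposition FE of P} the Whittaker parameters are $\kappa=\pm(1/4+j/2)$, $\mu=s-1/4+j/2$, so they grow with $j$, and no bound $W_{\kappa,\mu}(x)\ll e^{-x/2}$ uniform in the parameters exists (for fixed $x$ and large $\mu$ one has $W_{\kappa,\mu}(x)\asymp \frac{\Gamma(2\mu)}{\Gamma(\mu-\kappa+1/2)}x^{1/2-\mu}$, which blows up). Moreover each $j$-th term carries $(-4\pi^2|n|m)^j/j!$; if one applies your two bounds term by term, the resulting majorant contains $\sum_j(4\pi^2|n||m|)^j/j!=e^{4\pi^2|n||m|}$, which overwhelms $e^{-2\pi|n|v}$ for $v<2\pi|m|$, so locally uniform convergence on compact subsets of $\H$ does not follow. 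What actually rescues the argument — and what occupies most of the paper's proof — is the parameter-uniform analysis of the Tricomi function: writing $W_{\kappa,\mu}(x)=e^{-x/2}x^{\mu+1/2}U(a,b,x)$, the bound \eqref{eq estimate for U} supplies the factor $x^{-\Re(a)}$ (for $n<0$, where $a=s+1/2+j$), respectively the normalization $1/\Gamma(s+1/2+j)$ does the work for $n>0$, and it is exactly this $j$-dependent decay that converts $(4\pi^2|n||m|)^j$ into essentially $(\pi|m|/v)^j$, whose $j$-sum stays bounded. In addition, for $n>0$ one has $a=s$, so $\Re(a)>0$ fails near $s=0$ and the paper must first iterate the recurrence $U(a,b,z)=zU(a+1,b+1,z)-(b-a-1)U(a+1,b,z)$ finitely many times before \eqref{eq estimate for U} applies; this case distinction between $n<0$ and $n>0$ is absent from your sketch, which labels the required uniform control ``easier'' when it is in fact the technical core of the proof.
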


\begin{proof}
	We let $m \neq 0$ since for $m=0$ this has already been shown in Theorem \ref{theorem MC of P}. Further, we fix $\gamma \in L'/L$ and $\Omega \subseteq \C$ compact such that $Z(1/4+s+j;\beta,m,\gamma,n)$ is holomorphic for all $s \in \Omega$, $j \in \N_0$ and $n \in \Z+Q(\beta)$. Recall that apart from the harmless term $v^s e(m\tau) (\e_\beta+\e_{-\beta})$ the Fourier coefficients of $P_{\beta,m}(\tau,s)$ are all of the form
	\begin{align*}
		b(\gamma,n;v,s) = \sum_{j=0}^\infty c_j(n;v,s) Z(1/4+s+j;\beta,m,\gamma,n) .
	\end{align*}
	We choose $\ell \in \N_0$ such that $1/4+\Re(s)+\ell>1$ for all $s \in \Omega$, and split the above sum over $j$ into a finite sum $j=0,1,\hdots,\ell-1$ and an infinite sum starting with $j=\ell$. The finitely many terms of the first sum can now be estimated using Lemma \ref{lemma growth of Z} and the formula $W_{\kappa,\mu}(x) = O(x^\kappa e^{-x/2})$ as $x \to \infty$. In the second (infinite) sum the Kloosterman zeta functions can be bounded by a universal constant since the choice of $\ell$ guarantees that the real part of their argument is bigger than $1$.
	
	If $n=0$ then the remaining sum can be written as part of a confluent hypergeometric series $\HypF{1}{1}(a,b;x)$, which is meromorphic in $a$ and $b$. Thus we may assume that $n \neq 0$. In this case it remains to study the asymptotic behaviour of the Whittaker functions $W_{\pm 1/4 \pm j/2,s-1/4+j/2}(4\pi |n|v)$ in $j$, $n$ and $v$. Recall that
	\[
		W_{\kappa,\mu}(x) = e^{-x/2} x^{\mu+1/2} U(1/2+\mu-\kappa,1+2\mu,x) ,
	\]
	where $U(a,b,x)$ is Tricomi's confluent hypergeometric function. If $\Re(a)>0$ we can use the integral representation of $U(a,b,x)$ to obtain
	\begin{align} \label{eq estimate for U}
		\abs{ \Gamma(a) U(a,b,x) } \leq \begin{dcases}
			2^{\Re(b-a)-1} (x^{-\Re(a)} \Gamma(\Re(b)-1) + 1) , & \text{if $\Re(b-a)>1$} , \\
			x^{-\Re(a)} \Gamma(\Re(a)) , & \text{if $\Re(b-a) \leq 1$} .
		\end{dcases}
	\end{align}
	If $n<0$ then the condition $\Re(a)>0$ translates to $\Re(s)+1/2+j>0$, which is true for all $j \geq L$. Hence we may use the above estimate in this case. Putting everything carefully together, we find that
	\begin{align} \label{eq bound b(gamma,n,v,s) in n}
		b(\gamma,n;v,s) = O(e^{-\eps|n|}) \qquad \text{as $n \to -\infty$}
	\end{align}
	for some $\eps>0$, uniformly in $v$ and $s$, where $v$ is in a given compact subset $V$ of $\H$ and $s \in \Omega$. The implied constant depends on $\beta,m,\gamma,V$ and $\Omega$. Moreover, we find
	\begin{align} \label{eq bound b(gamma,n,v,s) in v}
		b(\gamma,n;v,s) = O(e^{-\delta v}) \qquad \text{as $v \to \infty$}
	\end{align}
	for some $\delta>0$, uniformly in $n$ and $s$, where $n<0$ and $s \in \Omega$. The implied constant depends on $\beta,m,\gamma$ and $\Omega$.
	
	If $n>0$ we repeatedly use the recurrence relation
	\[
		U(a,b,z) = zU(a+1,b+1,z) - (b-a-1)U(a+1,b,z)
	\]
	from \cite[eq. 13.4.18]{Abramowitz1984}, until $\Re(a+k) = \Re(s)+k$ is positive for all $s \in \Omega$. In each of these finitley many pieces appearing during this process, we can now use the estimate (\ref{eq estimate for U}), yielding the same bounds as in (\ref{eq bound b(gamma,n,v,s) in n}) and (\ref{eq bound b(gamma,n,v,s) in v}) for the coefficients $b(\gamma,n,v,s)$ with $n$ positive.
	
	Thus we have shown that also for $m \neq 0$ the Fourier series given in Proposition \ref{proposition FE of P} defines a meromorphic function in $s$ on $\C$ which is real analytic in $\tau$. This function is by construction the analytic continuation of $P_{\beta,m}(\tau,s)$ in $s$.
\end{proof}

\begin{lemma} \label{lemma P(tau,0)}
	Let $\beta \in L'/L$ and $m \in \Z+Q(\beta)$. The meromorphic continuation of $P_{\beta,m}(\tau,s)$ is holomorphic in $s=0$ with a Fourier expansion of the form
	\begin{align*}
		&P_{\beta,m}(\tau,0) = e(m\tau) (\e_\beta + \e_{-\beta})
			+ \sum_{\gamma \in L'/L} \sum_{\substack{n \in \Z+Q(\gamma) \\ n>0}} \tilde b(\gamma,n) e(n\tau) \e_\gamma \\
			&\hspace{3.5cm} + \sum_{\gamma \in L'/L} \sum_{\substack{n \in \Z+Q(\gamma) \\ n<0}} \tilde b(\gamma,n) \Gamma(1/2,4\pi|n|v) e(n\tau) \e_\gamma .
	\end{align*}
	Here the coefficients $\tilde b(\gamma,n)$ are real and do not depend on $v$ and $s$. If $m \geq 0$ then $\tilde b(\gamma,n)=0$ for all $\gamma \in L'/L$ and $n \in \Z+Q(\gamma)$ with $n<0$. We define
	\[
		P_{\beta,m}(\tau) = P_{\beta,m}(\tau,0) .
	\]
\end{lemma}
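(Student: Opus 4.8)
The plan is to read off the Fourier expansion at $s=0$ directly from the explicit formula in Proposition \ref{proposition FE of P}, using Theorem \ref{theorem MC of P via FE} to justify that the meromorphically continued Poincar\'e series is given by this Fourier expansion termwise. The first step is to verify holomorphicity at $s=0$ by examining each piece of the expansion. The leading term $v^s e(m\tau)(\e_\beta + \e_{-\beta})$ is manifestly holomorphic at $s=0$ and specialises to $e(m\tau)(\e_\beta+\e_{-\beta})$. For the constant coefficient $b(0,0;v,s)$, the prefactor $1/\Gamma(s)$ vanishes at $s=0$, and since each Kloosterman zeta value $Z(1/4+s+j;\beta,m,0,0)$ is holomorphic at $s=0$ by Lemma \ref{lemma MC of Z} (as $n=0 \geq 0$), the whole term vanishes at $s=0$. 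This is why no $\e_0$ constant term survives. For $n \neq 0$, I would invoke Lemma \ref{lemma MC of Z} to ensure each $Z(1/4+j;\beta,m,\gamma,n)$ is holomorphic at $s=0$, the only potential obstruction being when both $m<0$ and $n<0$, where the zeta function may have a simple pole at $s=1/4$, i.e.\ at the $j=0$ term; but there the accompanying factor $1/\Gamma(s)$ again vanishes, cancelling the pole.

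Next I would specialise the $n\neq 0$ coefficients at $s=0$ using known values of the Whittaker function. For $n>0$ one uses $W_{1/4+j/2,\,j/2-1/4}(4\pi n v)$ together with the reduction that $W_{\kappa,\mu}$ with these indices collapses to an exponential, yielding a purely holomorphic Fourier coefficient $\tilde b(\gamma,n)\,e(n\tau)$ with $\tilde b(\gamma,n)$ independent of $v$. For $n<0$ the relevant Whittaker function $W_{-1/4-j/2,\,j/2-1/4}(4\pi|n|v)$ reduces at $s=0$ to the incomplete gamma function, giving the non-holomorphic part $\tilde b(\gamma,n)\,\Gamma(1/2,4\pi|n|v)\,e(n\tau)$; here the $1/\Gamma(s)$ prefactor forces all terms with $j\geq 1$ to vanish at $s=0$, so only the $j=0$ contribution remains, which is exactly what produces a single incomplete-gamma profile rather than an infinite sum. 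Reality of the coefficients $\tilde b(\gamma,n)$ follows from the reality of the Kloosterman zeta values (the Kloosterman sums satisfy $H_c = \overline{H_{-c}}$, so $Z(s;\beta,m,\gamma,n)$ is real for real $s$) combined with the real values of the remaining special functions.

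The assertion that $\tilde b(\gamma,n)=0$ for $n<0$ whenever $m\geq 0$ is handled separately: for $m\geq 0$ Lemma \ref{lemma MC of Z} gives holomorphicity of $Z(1/4+s;\beta,m,\gamma,n)$ at $s=0$ even when $n<0$, so the $j=0$ term, which is the only potential survivor of the $n<0$ expansion, again acquires a vanishing $1/\Gamma(s)$ factor and contributes nothing. Thus the non-holomorphic part disappears entirely, consistent with $P_{\beta,m}(\tau,0)$ being holomorphic (indeed a weakly holomorphic or holomorphic form) in those cases. The main obstacle I anticipate is the bookkeeping in the $m<0$, $n<0$ case: one must track carefully how the simple pole of $Z$ at $s=1/4$ interacts with the $1/\Gamma(s)$ factor and with the Whittaker function's behaviour, to confirm that the $j=0$ term survives with a finite, $v$-independent coefficient while all higher $j$ vanish. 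Everything else reduces to substituting $s=0$ into the special functions and recognising $\Gamma(1-1/2,\cdot)=\Gamma(1/2,\cdot)$, matching the general shape \eqref{FourierExpansion} of a harmonic Maass form of weight $1/2$.
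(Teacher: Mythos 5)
Your proposal is correct and follows essentially the same route as the paper's proof: holomorphicity at $s=0$ via Lemma \ref{lemma MC of Z} (with the exceptional $j=0$, $m,n<0$ pole cancelled by the $1/\Gamma(s)$ factor), specialisation of the Whittaker functions $W_{1/4+j/2,\,j/2-1/4}$ and $W_{-1/4,-1/4}$ to the exponential and incomplete-gamma profiles, and reality of $\tilde b(\gamma,n)$ from the conjugation symmetry of the Kloosterman data (your $\overline{H_{-c}}=H_c$ formulation is equivalent to the paper's $\overline{Z(s)}=Z(\overline{s})$). Your explicit treatment of the constant coefficient $b(0,0;v,s)$ and of why only $j=0$ survives in the $n<0$ sum merely spells out details the paper leaves implicit.
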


\begin{proof}
	First of all we note that by Lemma \ref{lemma MC of Z} the Kloosterman zeta functions $Z(1/4+s+j;\beta,m,\gamma,n)$ with $j \in \N_0$ showing up in the Fourier expansion are all holomorphic at $s=0$ except for the case $j=0$ and $m,n<0$ in which we may have a simple pole at that point. However, for $n < 0$ the factor $1/\Gamma(s)$ in the Fourier coefficient $b(\gamma,n;v,s)$ compensates for this possible pole if $m < 0$, and implies the vanishing of $b(\gamma,n;v,s)$ at $s = 0$ if $m \geq 0$. In particular, the Fourier coefficients of $P_{\beta,m}(\tau,s)$ are holomorphic at $s=0$. Using the formulas 
	\[
		W_{-1/4,-1/4}(x) = x^{1/4} e^{x/2} \Gamma(1/2,x)
		\qquad \text{and} \qquad
		W_{\mu,\mu-1/2}(x) = x^\mu e^{-x/2}
	\]
	for $x>0$ (see \cite{Erdelyi1955}, formulas (2),(21) and (36) in Section 6.9) we obtain an expansion of the claimed form. Moreover, the Fourier coefficients $\tilde b(\gamma,n)$ are indeed real since $\overline{Z(s;\beta,m,\gamma,n)} = Z(\overline{s};\beta,m,\gamma,n)$.
\end{proof}

\begin{theorem} \label{theorem P_beta,m(tau)}
	The function $P_{\beta,m}(\tau) \in H_{1/2,\rho}^+$  can be characterized as follows:
	\begin{enumerate}
		\item For $m > 0$ the function $P_{\beta,m}$ is the unique cusp form of weight $1/2$ for $\rho$ characterized by the inner product formula
		\begin{align} \label{PeterssonCoefficientFormula}
			(f,P_{\beta,m}) = -8\pi\sqrt{m} \, c_f(\beta,m)
		\end{align}
		for each $f = \sum_\gamma \sum_n c_f(\gamma,n) e(n\tau) \e_{\gamma} \in S_{1/2,\rho}$.
		\item For $m=\beta=0$ we have
		\[
			P_{0,0}(\tau) = \frac{2}{\sigma_{0}(N)}\sum_{c \mid N}\theta^{w_{c}} \in M_{1/2,\rho} ,
		\]
		where $\sigma_0(N) = \sum_{c \mid N} 1$.
		\item For $m < 0$, the function $P_{\beta,m}$ is the unique harmonic Maass form of weight $1/2$ for $\rho$ which has principal part $(\e_{\beta} + \e_{-\beta})e(m\tau)$, which is orthogonal to cusp forms with respect to the regularized inner product, and which maps to a cusp form under the differential operator $\xi_{1/2}$.
	\end{enumerate}
\end{theorem}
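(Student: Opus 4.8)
The plan is to read off all three statements from two inputs established earlier: the differential equation \eqref{eq DE of P} and the explicit Fourier expansion of $P_{\beta,m}(\tau,0)$ in Lemma \ref{lemma P(tau,0)}. Evaluating \eqref{eq DE of P} at $s=0$ gives $\Delta_{1/2}P_{\beta,m}(\tau,0)=0$, so $P_{\beta,m}(\tau)$ is harmonic; it is modular of weight $1/2$ for $\rho$ by construction and grows at most linearly exponentially because its principal part is finite. Since the non-holomorphic terms in Lemma \ref{lemma P(tau,0)} occur only for $n<0$, the form $\xi_{1/2}P_{\beta,m}$ has a Fourier expansion supported on strictly positive exponents and is therefore a cusp form, so $P_{\beta,m}\in H^+_{1/2,\rho}$ in every case. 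It then remains to identify $P_{\beta,m}$ inside $H^+_{1/2,\rho}$ by the sign of $m$.

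For $m>0$ the expansion in Lemma \ref{lemma P(tau,0)} has no non-holomorphic part, and its constant coefficient $b(0,0;v,0)$ vanishes: the factor $1/\Gamma(s)$ from Proposition \ref{proposition FE of P} kills it while the relevant values $Z(1/4+j;\beta,m,0,0)$ stay finite by Lemma \ref{lemma MC of Z}. Hence $P_{\beta,m}$ is a cusp form. To establish the reproducing formula \eqref{PeterssonCoefficientFormula} I would unfold $(f,P_{\beta,m}(\,\cdot\,,s))$ against the seed $v^s e(m\tau)\e_\beta$ for a cusp form $f$ and $\Re(s)$ large; the $u$-integration isolates the $(\beta,m)$-th coefficient of $f$ and leaves a Mellin integral, producing $c_f(\beta,m)\,\Gamma(s-1/2)(4\pi m)^{1/2-s}$ up to the factor $2$ supplied by the symmetrisation $\e_\beta+\e_{-\beta}$ coming from the centre of $\Mp_2(\Z)$. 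Continuing in $s$ and inserting $\Gamma(-1/2)=-2\sqrt{\pi}$ gives $-8\pi\sqrt{m}\,c_f(\beta,m)$ at $s=0$, while the left-hand side tends to $(f,P_{\beta,m})$ because $f$ and the cuspidal limit $P_{\beta,m}(\,\cdot\,,0)$ are both rapidly decreasing. Uniqueness follows from the non-degeneracy of the Petersson product on $S_{1/2,\rho}$.

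For $m=\beta=0$ the form $P_{0,0}(\tau)$ has no non-holomorphic part and (again by the $1/\Gamma(s)$ argument) no growing constant term, so it is holomorphic with constant term $2\e_0$ and lies in $M_{1/2,\rho}$, which by Section \ref{section unary theta functions} is spanned by the $\theta^{w_c}$. Writing $P_{0,0}=\sum_{c\mid N}a_c\theta^{w_c}$, I would use that the seed $v^s\e_0$ is fixed by every Atkin--Lehner involution, so $P_{0,0}$ is $w_c$-invariant for all $c\mid N$; since these involutions permute the $\theta^{w_c}$ transitively, all $a_c$ must coincide, and comparing the constant term $2\e_0$ with $(\sum_c a_c)\e_0$ forces $a_c=2/\sigma_0(N)$.

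For $m<0$, Lemma \ref{lemma P(tau,0)} shows that the only holomorphic terms of non-positive index are $(\e_\beta+\e_{-\beta})e(m\tau)$ (the constant term vanishing as before), so this is exactly the principal part, and together with the first paragraph this gives two of the three properties. For orthogonality I would once more unfold $(f,P_{\beta,m}(\,\cdot\,,s))$ for a cusp form $f$ and $\Re(s)$ large; the $u$-integration now isolates $c_f(\beta,m)$, which vanishes because $m<0$ and $f$ is cuspidal, so the pairing is identically $0$, and identifying the regularised inner product at $s=0$ with the analytic continuation of this (convergent, vanishing) pairing gives $(f,P_{\beta,m})^{\reg}=0$. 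Uniqueness is where I would invoke the Bruinier--Funke pairing from \cite{BruinierFunke04}: if $h\in H^+_{1/2,\rho}$ has trivial principal part, is $\xi_{1/2}$-cuspidal, and is orthogonal to cusp forms, then $\|\xi_{1/2}h\|^2=\{h,\xi_{1/2}h\}=0$, since that pairing sees only the principal part of $h$; hence $\xi_{1/2}h=0$, so $h$ is a holomorphic cusp form, and orthogonality forces $h=0$. The main obstacle throughout is not conceptual but the careful bookkeeping in the unfoldings --- in particular tracking the factor $2$ from the centre to reach $-8\pi\sqrt{m}$, and justifying that the regularised inner product for $m<0$ really equals the analytic continuation of the pairing from $\Re(s)\gg0$ rather than a naive substitution $s=0$.
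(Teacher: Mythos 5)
Your proposal is correct and follows essentially the same route as the paper's own proof: membership in $H^+_{1/2,\rho}$ via the differential equation \eqref{eq DE of P} and the expansion of Lemma \ref{lemma P(tau,0)}, Atkin--Lehner invariance plus constant-term comparison for $m=\beta=0$, unfolding of the (regularized) Petersson product and continuation to $s=0$ for $m\neq 0$, and the Bruinier--Funke pairing (Proposition 3.5 of \cite{BruinierFunke04}) for uniqueness when $m<0$. The constants ($2(4\pi m)^{1/2-s}\Gamma(s-1/2)c_f(\beta,m)$ evaluating to $-8\pi\sqrt{m}\,c_f(\beta,m)$ at $s=0$) check out, and the points you flag as needing care are exactly the ones the paper itself passes over with ``one can check''.
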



\begin{proof}
	Applying $\Delta_k$ to the Fourier expansion of $P_{\beta,m}$ given in Lemma \ref{lemma P(tau,0)}, or evaluating the differential equation (\ref{eq DE of P}) at $s=0$, we see that $P_{\beta,m}$ is harmonic. Moreover, $P_{\beta,m}$ is by analytic continuation modular of weight $1/2$ for $\rho$. Looking at its Fourier expansion we thus find that $P_{\beta,m}$ is an element of $S_{1/2,\rho}$, $M_{1/2,\rho}$ or $H_{1/2,\rho}^+$ if $m>0$, $m=0$ or $m<0$, respectively.
	
	If $m=\beta=0$ then $P_{0,0} \in M_{1/2,\rho}$ can be written as a linear combination of the unary theta series $\theta^{w_c}$ for $c \mid N$ introduced in Section \ref{section unary theta functions}, i.e., $P_{0,0} = \sum_{c \mid N} \lambda_{c} \theta^{w_{c}}$. On the other hand, $P_{0,0}$ is invariant under all Atkin-Lehner involutions, which can be checked for the defining series of $P_{0,0}(\tau,s)$ for $\Re(s) \gg 0$, and follows for $P_{0,0}(\tau)$ by analytic continuation. Hence all $\lambda_{c}$ agree. Comparing constant coefficients, we obtain $\lambda_{c} = 2/\sigma_{0}(N)$.
		
	Let now $m \neq 0$. The usual unfolding argument shows that for $f \in S_{1/2,\rho}$ and $\Re(s) \gg 0$ we have $(f,P_{\beta,m}(\cdot,s))^{\reg} = 0$ if $m<0$, and
	\[
		(f,P_{\beta,m}(\cdot,s))^{\reg} = 2(4\pi m)^{-s+1/2}\Gamma(s-1/2)c_{f}(\beta,m)
	\]
	if $m>0$. One can check that the regularized integral on the left-hand side has a meromorphic continuation to $s = 0$, and that its evaluation at $s=0$ agrees with $(f,P_{\beta,m})^{\reg}$. Hence $P_{\beta,m}$ is orthogonal to cusp forms if $m<0$, and satisfies the claimed formula if $m>0$.
	
	It remains to note that for $m<0$ the function $P_{\beta,m}$ is uniquely determined by the given conditions. Let $f \in H_{1/2,\rho}^{+}$ with $c_f^+(\gamma,n)=0$ for all $\gamma \in L'/L$ and $n<0$. Then
	\[
		(\xi_{1/2}f,\xi_{1/2}f) = \sum_{\gamma \in L'/L} \sum_{\substack{n \in \Z+Q(\gamma) \\ n<0}} c_{f}^{+}(\gamma,n) c_{\xi_{1/2}f}(\gamma,-n) = 0
	\]
	by Proposition 3.5 in \cite{BruinierFunke04}, and thus $\xi_{1/2}f = 0$, implying $f \in M_{1/2,\rho}$. Hence $P_{\beta,m}$ is uniquely determined by its principal part $(\e_\beta + \e_{-\beta}) e(m\tau)$ upto an element $f \in M_{1/2,\rho}$. Using that $c_{P_{\beta,m}}^+(0,0)=0$, and that $P_{\beta,m}$ is orthogonal to cusp forms, we thus obtain the claimed uniqueness.
\end{proof}

\begin{example}
	Let $N = 1$. Then we have $P_{0,0} = 2\theta$, and $P_{\beta,m} = 0$ for $m > 0$ since $S_{1/2,\rho} = \{0\}$ for $N = 1$. Further, for $m < 0$ the function $P_{\beta,m}$ is the unique weakly holomorphic modular form of weight $1/2$ with principal part $2e(m\tau)\e_{\beta}$. Under the isomorphism $f_{0}(\tau)\e_{0} + f_{1}(\tau)\e_{1} \mapsto f_{0}(4\tau) + f_{1}(4\tau)$ between $M_{1/2,\rho}^{!}$ and the space of scalar valued weakly holomorphic modular forms of weight $1/2$ for $\Gamma_{0}(4)$ satisfying the Kohnen plus space condition, $P_{\beta,m}$ is identified with Borcherds' basis element $2f_{4m} = 2q^{4m} + O(q)$, whose Fourier coefficients of positive index are given by twisted traces of the $j$-function (compare \cite{ZagierTraces}, Section 5).
\end{example}
\section{Eisenstein series as theta lifts} \label{section lifts}

In the current section we realize averaged versions of the parabolic, hyperbolic and elliptic Eisenstein series as regularized theta lifts of Selberg's Poincar\'e series introduced in Section \ref{section Poincare series}. We start by investigating the analytic properties of this lift. For $\beta \in L'/L$ and $m \in \Z+Q(\beta)$ with $m<0$ we define $H_{\beta,m}$ as the set of all Heegner points, i.e.
\begin{align*}
	H_{\beta,m} = \{z_X \in \H \colon X \in L_{\beta,m}\} .
\end{align*}
If $m \geq 0$ we simply put $H_{\beta,m} = \emptyset$.

\begin{proposition}\label{proposition theta lift continuation}
	Let $\beta \in L'/L$ and $m \in \Z + Q(\beta)$. 
	\begin{enumerate}
		\item For $s \in \C$ with $\Re(s) > 1/2$ the regularized theta lift
	\[
		\Phi(z,P_{\beta,m}(\,\cdot\,,s)) = \CT_{t=0} \left[ \lim_{T \to \infty} \int_{\calF_T} \sprod{P_{\beta,m}(\tau,s),\Theta(\tau,z)} \Im(\tau)^{1/2-t} \frac{du \, dv}{v^{2}} \right]
	\]
	defines a real analytic function in $z \in \H \setminus H_{\beta,m}$ and a holomorphic function in $s$, which can be meromorphically continued to all $s \in \C$.
		\item For $m \neq 0$, it is holomorphic at $s = 0$, and we have
		\begin{align*}
		\Phi(z,P_{\beta,m}(\,\cdot\,,0)) = \Phi(z,P_{\beta,m}(\,\cdot\,,s)) \big|_{s = 0}.
		\end{align*} 
		\item For $m = \beta = 0$, it has a simple pole at $s = 0$ with residue $-\frac{2}{s}$, and we have
		\begin{align*}
		\Phi(z,P_{0,0}(\,\cdot\,,0)) = \left(\Phi(z,P_{0,0}(\,\cdot\,,s)) + \frac{2}{s} \right)\bigg|_{s = 0}.
		\end{align*}
	\end{enumerate}
\end{proposition}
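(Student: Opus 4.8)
The plan is to analyze the regularized integral directly by splitting the truncated fundamental domain into a compact piece $\calF^{\leq T_0} = \{\tau \in \calF : v \leq T_0\}$ and a cuspidal neighbourhood $\{v > T_0\}$, and to track the $s$-dependence of each piece near $s=0$. The real content of parts (2) and (3) is that the analytic continuation in $s$ commutes with the $\CT_{t=0}$ and $\lim_{T\to\infty}$ regularizations, and the key observation is that the only obstruction to this sits in the constant ($u$-independent) Fourier mode of $\sprod{P_{\beta,m}(\tau,s),\Theta(\tau,z)}$ in the cusp. Throughout I fix $z \in \H \setminus H_{\beta,m}$.

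First I would dispose of the harmless contributions. By Theorem \ref{theorem MC of P via FE} the family $P_{\beta,m}(\tau,s)$ is holomorphic in $s$ and, on compact subsets of $\H$, converges together with its $\tau$-derivatives to $P_{\beta,m}(\tau,0)$; since $\Theta(\tau,z)$ is smooth and real analytic in $z$ off $H_{\beta,m}$, the integral over $\calF^{\leq T_0}$ is holomorphic in $s$ and equals the integral of $P_{\beta,m}(\tau,0)$ at $s=0$. For the cusp I would insert the Fourier expansions of $P_{\beta,m}$ (Proposition \ref{proposition FE of P}) and of $\Theta$ and integrate over $u$, which projects onto the constant term of the product. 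The non-constant modes of $\Theta(\tau,z)$ decay exponentially in $v$ (uniformly for $z$ away from $H_{\beta,m}$), so those integrals converge absolutely, are holomorphic in $s$ wherever the zeta functions $Z(1/4+s+j;\beta,m,\gamma,n)$ are (Lemma \ref{lemma MC of Z}, the possible pole at $s=0$ for $m,n<0$ being cancelled by the $1/\Gamma(s)$ in the coefficient), and pass to the limit $s\to0$ by dominated convergence, matching the corresponding part of $\Phi(z,P_{\beta,m}(\,\cdot\,,0))$.

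The decisive term is the constant mode. The leading constant term of $\Theta$ is $v^{1/2}\e_0$ (from $X=0$), while that of $P_{\beta,m}(\tau,s)$ is the $\e_0$-coefficient $b(0,0;v,s) \sim C(s)\,v^{1/2-s}$, plus, only when $m=\beta=0$, the seed term $2v^s\e_0$. Pairing these and integrating against $v^{1/2-t}\tfrac{dv}{v^2}$ over $v>T_0$ reduces to elementary integrals. For $m\neq0$ the seed oscillates in $u$ and drops out, leaving $C(s)\int_{T_0}^\infty v^{-1/2-s-t}\,dv = C(s)\,T_0^{1/2-s-t}/(s+t-1/2)$, which is holomorphic at $(s,t)=(0,0)$; since $C(s)\propto 1/\Gamma(s)$ vanishes at $s=0$, its value agrees with the (vanishing) constant mode read off from Lemma \ref{lemma P(tau,0)}, giving holomorphy and the identity in part (2). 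For $m=\beta=0$ the seed contributes $2\int_{T_0}^\infty v^{s-1-t}\,dv$, whose $\CT_{t=0}$ equals $-2T_0^s/s = -2/s - 2\log T_0 + O(s)$; this is exactly the asserted simple pole with residue $-2$. Subtracting $-2/s$ and setting $s=0$ leaves $-2\log T_0$, which coincides with the $\CT_{t=0}$ of the same seed term computed directly at $s=0$, namely of $2\int_{T_0}^\infty v^{-1-t}\,dv$, hence with the corresponding contribution to $\Phi(z,P_{0,0}(\,\cdot\,,0))$. The spurious $T_0$-dependence cancels against the compact piece in the usual way, while the residue $-2$ is manifestly $T_0$-independent; this yields part (3).

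For the meromorphic continuation in part (1) I would observe that, for $\Re(s)$ large, unfolding the integral against the Poincaré series expresses the lift as $\tfrac12\,\Gamma(s)\sum_{X\in L_{\beta,m}}(4\pi Q(X_z))^{-s}$ (with the $X=0$ term regularized when $m=0$), i.e. as the averaged Eisenstein series of Theorem \ref{theorem lift introduction}, whose meromorphic continuation is classical; this also re-derives convergence for $\Re(s)>1/2$ and real analyticity in $z$ on $\H\setminus H_{\beta,m}$. The main obstacle is precisely the cusp analysis above: justifying the interchange of the continuation in $s$ with the $\CT_{t=0}$ and $\lim_{T\to\infty}$ regularizations uniformly near $s=0$, and isolating from the $T_0$-dependent elementary integrals the single genuinely singular contribution, the pairing of the seed $2v^s\e_0$ with the constant term $v^{1/2}\e_0$ of $\Theta$, which produces the pole in the case $m=\beta=0$.
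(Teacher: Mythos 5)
Your proof of parts (2) and (3) is essentially the paper's own argument. The paper also splits the fundamental domain at a fixed height (it takes $T_0=1$), inserts the Fourier expansions and integrates over $u$, and isolates exactly the same three contributions: the seed paired with the theta terms $X\in L_{\beta,m}$ (all exponentially decaying for $z\notin H_{\beta,m}$ except $X=0$ in the case $m=\beta=0$), the coefficient $b(0,0;v,s)$ paired with the constant term $v^{1/2}\e_0$ of $\Theta$ (holomorphic near $s=0$ by Lemma \ref{lemma MC of Z} and vanishing there because of the factor $1/\Gamma(s)$ --- note that this argument must be run for every term $j\geq 0$ of $b(0,0;v,s)$, not just the leading one, but this is routine), and the rapidly decaying nonconstant modes (handled as in Proposition 2.8 of \cite{Bruinier2002}). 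In particular, your key computation $\CT_{t=0}\bigl[2\int_{T_0}^\infty v^{s-1-t}\,dv\bigr]=-2T_0^{s}/s$ versus $\CT_{t=0}\bigl[2\int_{T_0}^\infty v^{-1-t}\,dv\bigr]=-2\log T_0$ is precisely the paper's comparison of \eqref{eq bad integral} and \eqref{eq bad integral 2}, and it yields the pole $-2/s$ and the identities in (2) and (3) in the same way.

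Where you genuinely diverge is the meromorphic continuation in part (1): you propose to unfold and import the ``classical'' continuation of the averaged Eisenstein series, whereas the paper continues the lift intrinsically --- the compact piece is meromorphic because $P_{\beta,m}(\tau,s)$ is (Theorem \ref{theorem MC of P via FE}), the constant-mode integral continues via Lemma \ref{lemma MC of Z}, and the decaying-mode integrals continue wherever $P_{\beta,m}(\tau,s)$ is holomorphic. This difference is not innocent within the paper's architecture: the continuation of the averaged Eisenstein series is deduced as a \emph{corollary} of this proposition, and for hyperbolic Eisenstein series attached to infinite geodesics (square discriminant, trivial stabilizer $\Gamma_0(N)_c$) the continuation is not covered by \cite{JorgensonKramerPippich2009}, so invoking it would be circular in that case. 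Fortunately your own cusp analysis already contains everything needed: the same three pieces you control near $s=0$ continue to all of $\C$ by Theorem \ref{theorem MC of P via FE} and Lemma \ref{lemma MC of Z}, so the unfolding detour in part (1) can simply be dropped.
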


\begin{proof}
	The integral of $P_{\beta,m}(\tau,s)$ against $\Theta(\tau,z)$ over the compact set $\calF_1$, the fundamental domain truncuated at $y=1$, converges and is real analytic for all $z \in \H$, and holomorphic in $s$ whenever $P_{\beta,m}(\tau,s)$ is. Thus it suffices to consider
	\[
		\vphi(z,s,t) = \int_1^\infty \int_{-1/2}^{1/2} \sprod{P_{\beta,m}(\tau,s),\Theta(\tau,z)} v^{1/2-t} \frac{du \, dv}{v^2}.
	\]
	Plugging in the Fourier expansion of $P_{\beta,m}(\tau,s)$ and the defining series for $\Theta(\tau,z)$, and carrying out the integral over $u$, we find
	\begin{align} \label{eq proof convergence of lift 1}
		\vphi(z,s,t) &=
			2\int_1^\infty \sum_{X \in L_{\beta,m}} v^{s-1-t} \exp(-4\pi vQ(X_{z})) dv+ \int_1^\infty b(0,0;v,s) v^{-1-t} dv \notag  \\
			&\quad + \int_1^\infty \sum_{\substack{X \in L' \\ X \neq 0}} b(X,Q(X);v,s) v^{-1-t} \exp(-2\pi vQ_z(X)) dv,
	\end{align}
	where $Q_z(X) = Q(X_{z})-Q(X_{z^{\perp}})$ is the positive definite majorant of $Q$ associated to $z$. If $m = \beta =0$, we further split off the summand for $X =0$ in the first integral, giving the additional term
	\begin{align}\label{eq bad integral}
		2\int_{1}^{\infty}v^{s-1-t}dv = -\frac{2}{s-t}
	\end{align}
	for $\Re(t) > \Re(s)$. For $\Re(s) > 0$, the right-hand side has a continuation to $t = 0$, and taking the constant term at $t = 0$ yields a simple pole of $\Phi(z,P_{0,0}(\cdot,s))$ at $s = 0$ with residue $-\frac{2}{s}$. 
	
	The simple estimate $b(0,0;v,s) = O(v^{1/2-s})$ shows that for $\Re(s) > 1/2$ the second integral in \eqref{eq proof convergence of lift 1} is holomorphic at $t = 0$. We plug in $t = 0$, insert the explicit formula for $b(0,0;v,s)$ and evaluate the integrals over the powers of $v$, yielding
	\begin{align} \label{eq proof convergence of lift 3}
		\frac{2^{3/2-2s} \pi}{\Gamma(s)} \ 
			\sum_{j=0}^\infty \frac{(-\pi m)^j}{j!}
			\frac{ \Gamma(2s-1/2+j) }{ \Gamma(s+1/2+j) } \ 
			Z(1/4 + s+j;\beta,m,0,0)\frac{1}{s+j-1/2} .
	\end{align}
	By Lemma \ref{lemma MC of Z} the remaining expression has a meromorphic continuation in $s$ to $\C$, which is holomorphic and indeed vanishing at $s = 0$.
	
	Next we recall from the proof of Theorem \ref{theorem MC of P via FE} that for $n \neq 0$ the Fourier coeffcients $b(\gamma,n;v,s)$ are rapidly decreasing in $n$ and $v$ (see (\ref{eq bound b(gamma,n,v,s) in n}) and (\ref{eq bound b(gamma,n,v,s) in v})). Hence it can be shown as in the proof of Proposition 2.8 in \cite{Bruinier2002} that for $z \in \H \setminus H_{\beta,m}$ each of the remaining integrals in \eqref{eq proof convergence of lift 1} has a continuation to $t=0$, which is real analytic in $z \in \H \setminus H_{\beta,m}$ and has a continuation to all $s \in \C$ for which $P_{\beta,m}(\tau,s)$ is holomorphic. In particular, we can just plug in $t=s=0$ in these integrals.
	
	In order to prove the equations in part (2) and (3) of the proposition, we have to go through the same proof again, replacing $P_{\beta,m}(\tau,s)$ by $P_{\beta,m}(\tau,0)$. Using similar arguments as before one can show that all the integrals appearing during this process apart from the one corresponding to \eqref{eq bad integral} have holomorphic continuation to $t=0$. This proves part (2). For $m = \beta = 0$, the integral corresponding to \eqref{eq bad integral} is now given by
	\begin{align}\label{eq bad integral 2}
		2\int_{1}^{\infty}v^{-1-t} = \frac{2}{t}
	\end{align}
	for $\Re(t) > 0$, with vanishing constant term at $t = 0$. Comparing the constant terms of \eqref{eq bad integral} and \eqref{eq bad integral 2} at $t = 0$, we obtain part (3) of the proposition.
\end{proof}

Recall that given $X \in L_{\beta,m}$ with $m \neq 0$ there is an associated Heegner geodesic $c_X$ or Heegner point $z_X$ if $m>0$ or $m<0$, respectively. Moreover, given $X \in L_{0,0}$ we find $\lambda \in \Z$ such that $X=\lambda X_\ell$ where $X_\ell$ is a generator of the isotropic line $\ell = \Q X$ satisfying $\ell \cap L = \Z X_\ell$, and there is an associated cusp $p_X$ corresponding to the line $\ell$.

\begin{lemma} \label{lemma Q(X_zperp)}
	Let $X \in L_{\beta,m}$ with $\beta \in L'/L$ and $m \in \Z + Q(\beta)$, and let $z \in \H$. For $m \neq 0$ we have
	\[
		Q(X_{z}) = \begin{dcases}
			m \cosh^2(\dhyp(z,c_X)) , & \text{if $m>0$} , \\
			|m| \sinh^2(\dhyp(z,z_X)) , & \text{if $m<0$},
		\end{dcases}
	\]
	and for $m = \beta = 0$ and $X \neq 0$ we have
	\[
		Q(X_{z}) = \frac{\lambda^2}{4N} \Im(\sigma_{p_X}^{-1} z)^{-2} ,
	\]
	were $\lambda \in \Z$ with $X = \lambda X_\ell$ for a generator $X_{\ell}$ of $\Q X \cap L$, and $\sigma_{p_X} \in \SL_2(\R)$ is a scaling matrix for the cusp $p_X$.
\end{lemma}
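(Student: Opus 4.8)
The plan is to reduce all three cases to a single projection identity and then read off the geometry from one scalar pairing. Since $X(z)$ spans the negative line with $Q(X(z)) = -1$, i.e.\ $(X(z),X(z)) = -2$, the orthogonal projection of $X$ onto $\R X(z)$ is $X_{z^\perp} = -\tfrac12(X,X(z))\,X(z)$, whence $Q(X_{z^\perp}) = -\tfrac14 (X,X(z))^2$. As $X = X_z + X_{z^\perp}$ is an orthogonal decomposition and $Q(X) = m$, this yields the uniform formula
\[
  Q(X_z) = m + \tfrac14 (X,X(z))^2
\]
for every $X \in L_{\beta,m}$ and $z \in \H$, so the lemma reduces to evaluating the single pairing $(X,X(z))$. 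The key geometric input is the identity
\[
  (X(z),X(w)) = -2\cosh(\dhyp(z,w)) \qquad (z,w \in \H),
\]
which I would obtain by a direct computation of $N\tr(X(z)X(w))$ combined with the standard formula $\cosh(\dhyp(z,w)) = 1 + |z-w|^2/(2\Im(z)\Im(w))$.

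For $m<0$ every form in $\calQ_{\beta,4Nm}$ is definite, so $z_X \in \H$, and comparing the defining matrices shows that $X$ and $X(z_X)$ span the same negative line with norms $m$ and $-1$; hence $X = \pm\sqrt{|m|}\,X(z_X)$. Substituting this into the projection formula and using the cosh identity gives $Q(X_z) = m + |m|\cosh^2(\dhyp(z,z_X)) = |m|\sinh^2(\dhyp(z,z_X))$, as claimed. For $m>0$ I would instead use $\SL_2(\R)$-equivariance: the quantities $Q(X_z)$, the distance $\dhyp$, and the assignment $X \mapsto c_X$ are all equivariant via $g.X(z) = X(gz)$, and $\SL_2(\R)$ acts transitively on oriented geodesics, so it suffices to treat the representative whose geodesic is the imaginary axis. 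The positive-norm vector orthogonal to that axis with norm $m$ is $\pm X_0$ with $X_0 = \sqrt{m/N}\,\smat{1&0\\0&-1}$, for which a one-line computation gives $Q((X_0)_z) = m\bigl(1 + \Re(z)^2/\Im(z)^2\bigr)$; together with $\sinh(\dhyp(z, i\R_{>0})) = |\Re(z)|/\Im(z)$ this is $m\cosh^2(\dhyp(z,c_X))$. (Alternatively one can substitute the explicit $X$ and invoke the classical formula for the distance from a point to the geodesic $aN|z|^2+bx+c=0$, whose discriminant equals $4Nm$.)

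For the parabolic case $m=\beta=0$, $X \neq 0$, I would again reduce to the cusp $\infty$ through the scaling matrix. A direct computation of $(X_\infty, X(w))$ for the standard generator $X_\infty = \smat{0&1/N\\0&0}$ of $\ell_\infty \cap L$ gives $Q\bigl((\lambda X_\infty)_w\bigr) = \lambda^2/(4N\,\Im(w)^2)$, and pulling back by $\sigma_{p_X}$ with $w = \sigma_{p_X}^{-1}z$ then yields the asserted formula, \emph{provided} $\sigma_{p_X}^{-1}.X_\ell = X_\infty$. This last point is the main obstacle: the scaling matrix $\sigma_{p_X}$ is not integral and so does not preserve $L$, and one must check that it nevertheless carries the lattice generator $X_\ell$ of the isotropic line at $p_X$ precisely to the standard generator at $\infty$. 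This is exactly where the width $N/c$ of the cusp $1/c$ enters — the width factor in $\sigma_{1/c}$ is calibrated so that $\sigma_{1/c}^{-1}.X_\ell = X_\infty$ (for squarefree $N$ one verifies this directly from the explicit $\sigma_{1/c}$ and $X_\ell$), whence $\lambda$ is unchanged under the reduction. Once this normalization is checked the parabolic formula follows; the elliptic and hyperbolic cases are then routine given the projection formula and the cosh identity.
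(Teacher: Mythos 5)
Your proposal is correct, and in fact it supplies more of an argument than the paper does: the paper dismisses both cases $m \neq 0$ as well known ("follow by a direct calculation") and only details the parabolic case, where it computes $Q((X_c)_z)$ directly for the explicit generator $X_c = \smat{-1&1/c\\-c&1}$ and exhibits the explicit scaling matrix $\sigma_{1/c} = \sqrt{c/N}\smat{N/c&1\\N&1+c}$, finishing by the scaling $Q((\lambda X)_z)=\lambda^2Q(X_z)$ and $\Gamma_0(N)$-equivariance. Your route is organized differently and is more unified: the projection identity $Q(X_z) = m + \tfrac14(X,X(z))^2$ (valid because $Q(X(z))=-1$) reduces all three cases to one pairing; the identity $(X(z),X(w)) = -2\cosh(\dhyp(z,w))$ plus the observation $X = \pm\sqrt{|m|}\,X(z_X)$ settles $m<0$; and equivariant reduction to a convenient representative (the imaginary axis for $m>0$, the cusp $\infty$ for $m=0$) settles the rest. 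This buys a conceptual clarification the paper's phrasing obscures, namely that the $m\neq 0$ formulas are statements about arbitrary norm-$m$ vectors in $V(\R)$, proved once on a single $\SL_2(\R)$-orbit, whereas the paper's parabolic computation avoids ever asking how the (non-integral) scaling matrix acts on lattice vectors. The one step you left as "one verifies" — that $\sigma_{p_X}^{-1}.X_\ell = X_\infty$ — is precisely the content of the paper's "simple calculation", and it does hold with the paper's choices:
\[
	\sigma_{1/c}^{-1}X_c\,\sigma_{1/c}
	= \frac{c}{N}\smat{1+c&-1\\-N&N/c}\smat{-1&1/c\\-c&1}\smat{N/c&1\\N&1+c}
	= \frac{c}{N}\smat{-1&1/c\\0&0}\smat{N/c&1\\N&1+c}
	= \frac{c}{N}\smat{0&1/c\\0&0}
	= X_\infty .
\]
Moreover this property is independent of the choice of scaling matrix, since any two scaling matrices for the same cusp differ by right multiplication by $\pm\smat{1&t\\0&1}$, and conjugation by such matrices fixes $X_\infty$; the same remark shows $\Im(\sigma_{p_X}^{-1}z)$ is well defined, so your parabolic case closes and the argument is complete.
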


\begin{proof}
	The two formulas for $m \neq 0$ are well known and follow by a direct calculation. Let $m = \beta =0$, and let $c$ be a positive divisor of $N$. Then the cusp $1/c$ of $\Gamma_0(N)$ corresponds to the isotropic line $\Q X_c$ with generator $X_c = \smat{-1&1/c\\-c&1} \in L$, and a simple calculation shows that
	\[
		Q((X_{c})_{z}) = \frac{1}{4N} \Im(\sigma_{1/c}^{-1}z)^{-2}
		\qquad \text{with} \qquad
		\sigma_{1/c} = \sqrt{\frac{c}{N}} \pmat{N/c & 1 \\ N & 1+c} .
	\]
	Here $\sigma_{1/c}$ is a scaling matrix for the cusp $1/c$. Since $Q((\lambda X)_{z}) = \lambda^2 Q(X_{z})$ for $\lambda \in \Z$, and since the identification in (\ref{eq cusps and isotropic lines}) is compatible with the action of $\Gamma_0(N)$, this proves the claimed formula.
\end{proof}

\begin{theorem} \label{theorem lift 1}
	Let $\beta \in L'/L$ and $m \in \Z + Q(\beta)$. Then
	\[
		\Phi(z,P_{\beta,m}(\,\cdot\,,s)) = \begin{dcases}
			\frac{2 \Gamma(s)}{(4\pi m)^s} \sum_{X \in \Gamma_0(N)\setminus L_{\beta,m}} \Ehyp_{c_X}(z,2s) , & \text{if $m>0$} , \\
			4 N^s \zeta^*(2s) \sum_{p \in C(\Gamma_0(N))} \Epar_{p}(z,2s) , & \text{if $m=0$} , \\
			\frac{2 \Gamma(s)}{(4\pi|m|)^s} \sum_{X \in \Gamma_0(N)\setminus L_{\beta,m}} \Eell_{z_X}(z,2s) , & \text{if $m<0$} ,
		\end{dcases}
	\]
	for $z \in \H \setminus H_{\beta,m}$ and $s \in \C$ with $\Re(s) > 1/2$. Here, $\zeta^*(s) = \pi^{-s/2} \Gamma(s/2) \zeta(s)$ is the completed Riemann zeta function.
\end{theorem}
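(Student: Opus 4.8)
The plan is to evaluate the lift by the unfolding method, exploiting that $P_{\beta,m}(\tau,s)$ is by construction a sum over $\ssprod{(T,1)} \backslash \Mp_2(\Z)$ of the seed $v^s e(m\tau)\e_\beta$. First I would fix $s$ with $\Re(s)$ large, so that everything converges absolutely, and only continue in $s$ at the very end, invoking Proposition \ref{proposition theta lift continuation}. Since $\Theta(\tau,z)$ is modular of weight $1/2$ for $\rho$ in $\tau$, the weighted pairing $\sprod{P_{\beta,m}(\,\cdot\,,s),\Theta(\,\cdot\,,z)} v^{1/2}\,\frac{du\,dv}{v^2}$ is $\SL_2(\Z)$-invariant, so the integral over $\calF$ unfolds into an integral of $\sprod{v^s e(m\tau)\e_\beta,\Theta}$ over the strip $\ssprod{T} \backslash \H = \{-\tfrac12\le u\le\tfrac12,\ v>0\}$. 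Here one must keep careful track of the normalising constant: the prefactor $\tfrac12$ in the definition of $P_{\beta,m}$ is compensated by the metaplectic double cover together with the action of $-I$ (which acts trivially on $\H$ but contributes a separate coset), so that the unfolded integral carries an overall factor $2$.

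Next I would insert the defining series for $\Theta(\tau,z)$, extract the $\e_\beta$-component through the pairing, and carry out the integral over $u\in[-\tfrac12,\tfrac12]$. Because $Q(X)\in\Z+Q(\beta)$ for $X\in L+\beta$ and $m\in\Z+Q(\beta)$, this $u$-integral acts as a Kronecker delta selecting precisely the vectors $X\in L_{\beta,m}$, i.e.\ those with $Q(X)=m$. For such $X$ one has $Q(X)=Q(X_z)+Q(X_{z^\perp})$, and the surviving $v$-exponential collapses to $e^{-4\pi Q(X_z)v}$. The remaining $v$-integral is then a Gamma integral, $\int_0^\infty v^{s-1}e^{-4\pi Q(X_z)v}\,dv=\Gamma(s)(4\pi Q(X_z))^{-s}$, valid term by term since $Q(X_z)>0$ for $z\in \H\setminus H_{\beta,m}$. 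Collecting constants yields
\[
\Phi(z,P_{\beta,m}(\,\cdot\,,s)) = \frac{2\Gamma(s)}{(4\pi)^s}\sum_{X\in L_{\beta,m}} Q(X_z)^{-s},
\]
where the interchange of summation and integration is justified exactly as in the proof of Proposition 2.8 of \cite{Bruinier2002}, using the rapid decay already exploited in Proposition \ref{proposition theta lift continuation}.

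It then remains to substitute the geometric identities of Lemma \ref{lemma Q(X_zperp)} and to regroup the sum over $L_{\beta,m}$ into $\Gamma_0(N)$-orbits. For $m>0$ one has $Q(X_z)^{-s}=m^{-s}\cosh(\dhyp(z,c_X))^{-2s}$, and since the stabiliser of $X$ coincides with the stabiliser of the oriented geodesic $c_X$, each orbit-sum is exactly $\Ehyp_{c_X}(z,2s)$, giving the first formula; the case $m<0$ is identical with $\sinh$, $z_X$ and $\Eell_{z_X}(z,2s)$ replacing $\cosh$, $c_X$ and $\Ehyp_{c_X}$. I expect the genuinely delicate case to be $m=\beta=0$. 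There one splits off the term $X=0$, whose contribution is controlled only through the $\CT_{t=0}$-regularisation (this is precisely the term producing the pole identified in Proposition \ref{proposition theta lift continuation}); the nonzero isotropic vectors are written as $X=\lambda X_\ell$ with $\lambda\in\Z\setminus\{0\}$ and $\ell\in\Iso(V)$, the sum over $\lambda$ contributes $\sum_{\lambda\neq0}|\lambda|^{-2s}=2\zeta(2s)$, and the sum over the isotropic lines in a fixed $\Gamma_0(N)$-orbit assembles into $\Epar_p(z,2s)$ via the bijection \eqref{eq cusps and isotropic lines} and the parabolic identity of Lemma \ref{lemma Q(X_zperp)}. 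Combining the constants $2\cdot(4\pi)^{-s}\cdot(4N)^s\cdot2 = 4N^s\pi^{-s}$ with the factor $\Gamma(s)\zeta(2s)$ produces exactly the completed zeta factor, yielding $4N^s\zeta^*(2s)\sum_{p}\Epar_p(z,2s)$. The main obstacles are therefore the correct bookkeeping of the metaplectic normalisation (the overall factor $2$) and the careful treatment of the $X=0$ contribution and the emergence of $\zeta^*(2s)$ in the parabolic case, rather than any hard analytic estimate.
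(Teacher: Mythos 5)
Your strategy is exactly the paper's: unfold against the Poincar\'e series at large $\Re(s)$, carry out the $u$-integral to select $L_{\beta,m}$, evaluate the $v$-integral as a Gamma integral to reach $\frac{2\Gamma(s)}{(4\pi)^s}\sum_X Q(X_z)^{-s}$, then apply Lemma \ref{lemma Q(X_zperp)} and decompose into $\Gamma_0(N)$-orbits; your constants, including the overall factor $2$ and the assembly of $4N^s\zeta^*(2s)$, all check out, and the extension to $\Re(s)>1/2$ by holomorphic continuation is also how the paper concludes. The gap sits precisely in the case you yourself flag as delicate, $m=\beta=0$, and it is twofold.

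First, for $m=\beta=0$ you cannot literally ``unfold the integral over $\calF$ onto the strip'': the regularizing weight $v^{1/2-t}$ is not $\SL_2(\Z)$-invariant, and at $t=0$ the integral genuinely diverges no matter how large $\Re(s)$ is, since the constant term of $P_{0,0}$ paired with the $X=0$ term of $\Theta$ produces $\int^{\infty} v^{s-1}\,dv$. So there is no absolutely convergent integral to which the unfolding argument applies. The paper circumvents this by splitting the Poincar\'e series into the coset of $(T,1)$, which is kept over $\calF$ with the full regularization, and the remaining cosets, whose pairing with $\Theta$ decays at $i\infty$, so that for them one may set $t=0$, let $T\to\infty$, and unfold only this part onto $\calG=\{\tau\in\H:|\Re(\tau)|\le 1/2,\,|\tau|<1\}$; recombining the two pieces and cutting at $v=1$ then yields one unregularized strip integral over $v<1$ and one $t$-regularized strip integral over $v\ge 1$.

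Second, your parenthetical claim about the $X=0$ term is inconsistent with the formula you state. In the unfolded bookkeeping the vector $X=0$ contributes $2\int_0^1 v^{s-1}\,dv = 2/s$ from the region $v<1$ and $\CT_{t=0}\,2\int_1^\infty v^{s-1-t}\,dv = -2/s$ from the regularized region $v\ge 1$; these cancel exactly, so $X=0$ contributes \emph{zero} for $\Re(s)>1/2$, which is the only reason it is legitimately absent from the parabolic formula. If instead, as your parenthesis suggests, the $X=0$ term were ``precisely the term producing the pole identified in Proposition \ref{proposition theta lift continuation}'', it would contribute $-2/s$, a nonzero function on $\Re(s)>1/2$, and your final identity would be off by exactly that amount. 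The pole of the continuation at $s=0$ is of course real, but in this organization of the proof it arises from the factor $\Gamma(s)\zeta(2s)=\pi^{s}N^{-s}\,N^{s}\pi^{-s}\Gamma(s)\zeta(2s)$ multiplying the sum over nonzero vectors; the attribution of that pole to $X=0$ belongs to the different bookkeeping used in the proof of Proposition \ref{proposition theta lift continuation}, where no unfolding is performed. To close the gap you need to actually compute the $X=0$ contribution under the regularization and verify the cancellation, rather than appeal to the Proposition.
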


\begin{proof}
	Splitting the sum over $(M,\phi) \in \ssprod{(T,1)} \backslash \Mp_2(\Z)$ defining the Poincar\'e series into matri\-ces $M$ with lower left entry $c=0$ and $c \neq 0$, we find
	\begin{align} \label{proof of theorem lift 1, eq 1}
		P_{\beta,m}(\tau,s)
			= v^s e(m\tau) \left( \e_\beta + \e_{-\beta} \right)
			+ \frac{1}{2} \sum_{\substack{ (M,\phi) \in \ssprod{(T,1)} \backslash \Mp_2(\Z) \\ M \neq \pm1 }} v^s e(m\tau) \e_\beta \weightkaction{1/2,\rho} (M,\phi) .
	\end{align}
	As the infinite sum behaves nicely at $i\infty$, we may plug in $t = 0$ and take the limit in the corresponding part of the regularized integral, yielding
	\begin{align*}
		\int_{\calF} \sprod{ \frac{1}{2} \sum_{M \neq \pm1} v^s e(m\tau) \e_\beta \weightkaction{1/2,\rho} \tilde M , \Theta(\tau,z)} v^{1/2} \frac{du \,dv}{v^{2}}
		= 2 \int_{\calG} v^{s+1/2} e(m\tau) \overline{\Theta_\beta(\tau,z)} \ \frac{du \, dv}{v^{2}},
	\end{align*}
	where $\calG = \{\tau\in\H \colon |\Re(\tau)| \leq 1/2, |\tau| < 1\}$ and $\Theta = \sum_{\gamma \in L'/L} \Theta_\gamma \e_\gamma$. The asymptotic behavior $\Theta_{\beta}(\tau,z) = O(v^{-1})$ as $v \to 0$, uniformly in $u$ (see \cite{Bruinier2002}, Lemma 2.13), shows that the unfolding is justified for $\Re(s)>3/2$.
	If we split the regularized integral corresponding to the first part of \eqref{proof of theorem lift 1, eq 1} at $v = 1$, we see that $\Phi(z,P_{\beta,m}(\,\cdot\,,s))$ equals
	\begin{align*}
		2 \int_{0}^{1}\int_{-1/2}^{1/2} v^{s+1/2} e(m\tau) \overline{\Theta_\beta(\tau,z)} \ \frac{du \, dv}{v^{2}}
			+ \CT_{t=0}  2 \int_{1}^{\infty}\int_{-1/2}^{1/2} v^{s+1/2-t} e(m\tau) \overline{\Theta_\beta(\tau,z)} \ \frac{du \, dv}{v^{2}} .
	\end{align*}
	Plugging in the definition of $\Theta_\beta(\tau,z)$ and evaluating the integral over $u$ this becomes
	\begin{align*}
		2\int_0^1 v^{s-1} \sum_{X \in L_{\beta,m}} \exp(-4\pi vQ(X_{z})) dv 
		+ \CT_{t = 0}2\int_1^\infty v^{s-1-t} \sum_{X \in L_{\beta,m}} \exp(-4\pi vQ(X_{z})) dv .
	\end{align*}
	For $m = \beta = 0$ a short calculation shows that the integrals over the summands for $X = 0$ in the two sums above cancel out. Thus we may assume $X \neq 0$ in both sums. Now for $z \notin H_{\beta,m}$ and $X \in L_{\beta,m}\setminus \{0\}$ we have $Q(X_{z}) \neq 0$, so we can simply plug in $t = 0$. Substituting $v' = 4\pi vQ(X_{z})$ we obtain
	\begin{align} \label{eq generalized Eisenstein series?}
	\Phi(z,P_{\beta,m}(\,\cdot\,,s)) = \frac{2 \, \Gamma(s)}{(4\pi)^s} \sum_{X \in L_{\beta,m}\setminus\{0\} } Q(X_{z})^{-s}.
	\end{align}
	Next we split the sum in \eqref{eq generalized Eisenstein series?} into a sum over classes $[X]$ in $\Gamma_0(N) \backslash L_{\beta,m} \setminus \{0\}$ and a sum over matrices $M$ in $\Gamma_0(N)_X \backslash \Gamma_0(N)$ where $\Gamma_0(N)_X$ is the stabilizer of $X$ in $\Gamma_0(N)$. Since $Q((MX)_{z})=Q(X_{M^{-1}z})$, we find
	\[
		\Phi(z,P_{\beta,m}(\,\cdot\,,s)) = \frac{2 \, \Gamma(s)}{(4\pi)^s} \sum_{X \in \Gamma_0(N) \backslash (L_{\beta,m}\setminus\{0\})} \ \sum_{M \in \Gamma_0(N)_X \backslash \Gamma_0(N)} Q(X_{Mz})^{-s} .
	\]
	Note that $\Gamma_0(N)_X = \Gamma_0(N)_{c_X}$ if $m>0$, and $\Gamma_0(N)_X = \Gamma_0(N)_{z_X}$ if $m<0$. Hence for $m \neq 0$ the statement of the theorem follows from Lemma \ref{lemma Q(X_zperp)} and using holomorphic continuation of both sides to $\Re(s) > 1/2$.
	Let now $m=\beta=0$. Given $X \in L_{0,0} \setminus \{0\}$ we write $X = \lambda X_\ell$ with $\lambda \in \Z$ and $X_\ell$ a generator of $\Q X \cap L$. Note that $\Gamma_0(N)_X = \Gamma_0(N)_{X_\ell} = \Gamma_0(N)_p$ with $p$ being the cusp associated to the isotropic line $\Q X$. Applying Lemma \ref{lemma Q(X_zperp)} we obtain
	\begin{align*}
		\Phi(z,P_{\beta,m}(\,\cdot\,,s)) &= \frac{2 \, \Gamma(s)}{(4\pi)^s} \ \sum_{\substack{\lambda \in \Z \\ \lambda \neq 0}} \ \sum_{\ell \in \Gamma_0(N) \backslash \Iso(V)} \ \sum_{M \in \Gamma_0(N)_{X_\ell} \backslash \Gamma_0(N)} Q((\lambda X_\ell)_{Mz})^{-s} \\
		&= \frac{4 N^s \Gamma(s) \zeta(2s)}{\pi^s} \sum_{p \in C(\Gamma_0(N))} \ \sum_{M \in \Gamma_0(N)_{p} \backslash \Gamma_0(N)} \Im(\sigma_{p}^{-1} Mz)^{2s}.
	\end{align*}
	Here $\sigma_p$ is a scaling matrix for the cusp $p$. As before the claimed statement follows by holomorphic continuation.
\end{proof}

We conclude this section with a simple corollary on the meromorphic continuation of Eisenstein series which follows directly by Proposition \ref{proposition theta lift continuation}. Further, we remark a possible generalization of averaged hyperbolic and elliptic Eisenstein series.

\begin{corollary}
	The averaged Eisenstein series appearing in Theorem \ref{theorem lift 1} have a meromorphic continuation in $s$ to $\C$. 
\end{corollary}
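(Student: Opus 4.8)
The plan is to obtain the desired continuation for free from the two results just proved: Theorem~\ref{theorem lift 1} expresses each averaged Eisenstein series, up to an explicit elementary factor, as the regularized theta lift $\Phi(z,P_{\beta,m}(\,\cdot\,,s))$, and Proposition~\ref{proposition theta lift continuation}(1) asserts that this lift (for fixed $z \in \H \setminus H_{\beta,m}$) is meromorphic in $s$ on all of $\C$. I would therefore solve the identity of Theorem~\ref{theorem lift 1} for the averaged Eisenstein series in each of the three cases and check that the factor I divide by is meromorphic and not identically zero; the quotient of a meromorphic function by such a factor is again meromorphic, which is exactly the claim.

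Carrying this out case by case: for $m > 0$ the relevant factor is $2\Gamma(s)/(4\pi m)^s$, giving
\[
\sum_{X \in \Gamma_0(N)\setminus L_{\beta,m}} \Ehyp_{c_X}(z,2s) = \frac{(4\pi m)^s}{2\,\Gamma(s)}\,\Phi(z,P_{\beta,m}(\,\cdot\,,s)) ,
\]
and the analogous identity with $\Eell_{z_X}$ and $|m|$ holds for $m < 0$. Here $(4\pi|m|)^s$ is entire and nowhere vanishing and $1/\Gamma(s)$ is entire, so multiplying the meromorphic lift by $(4\pi|m|)^s/(2\Gamma(s))$ stays within the class of meromorphic functions. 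For $m = 0$ the factor is $4N^s\zeta^*(2s)$, and since $\zeta^*$ is a meromorphic function which is not identically zero, its reciprocal is meromorphic, whence
\[
\sum_{p \in C(\Gamma_0(N))} \Epar_{p}(z,2s) = \frac{1}{4N^s\,\zeta^*(2s)}\,\Phi(z,P_{0,0}(\,\cdot\,,s))
\]
is meromorphic in $s$ as well.

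All three identities are valid a priori only in the half-plane $\Re(s) > 1/2$, where both sides are defined by convergent expressions; the right-hand sides, being meromorphic on $\C$, then furnish the meromorphic continuation of the averaged Eisenstein series in $s$ to all of $\C$, and replacing $s$ by $w/2$ recovers the continuation in the usual Eisenstein parameter $w$. I do not anticipate any real obstacle: the statement is essentially a bookkeeping consequence of Theorem~\ref{theorem lift 1} and Proposition~\ref{proposition theta lift continuation}, and the only point deserving a moment's attention is that dividing by $\Gamma(s)$ creates no new poles (as $\Gamma$ has no zeros, its reciprocal being entire), while dividing by $\zeta^*(2s)$ keeps one within the meromorphic category even though it may introduce poles at the zeros of $\zeta^*$.
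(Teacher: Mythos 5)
Your proposal is correct and is exactly the argument the paper intends: the paper states the corollary ``follows directly by Proposition \ref{proposition theta lift continuation}'', i.e.\ one solves the identities of Theorem \ref{theorem lift 1} for the averaged Eisenstein series and divides the meromorphically continued theta lift by the elementary factors $2\Gamma(s)/(4\pi|m|)^s$ resp.\ $4N^s\zeta^*(2s)$, just as you do. Your additional remarks (that $1/\Gamma(s)$ is entire, that dividing by $\zeta^*(2s)$ may introduce poles at its zeros but stays meromorphic, and that the identity is first valid on $\Re(s)>1/2$ and then extends) are precisely the bookkeeping the paper leaves implicit.
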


\begin{remark}
	Upto equation \eqref{eq generalized Eisenstein series?} we did not use the special shape of the lattice $L$. In fact, given an even lattice $(L,Q)$ of arbitrary signature $(b^{+},b^{-})$ and some pair $(\beta,m)$ with $\beta \in L'/L$, $m \in \Z+Q(\beta)$ one could use the right-hand side of \eqref{eq generalized Eisenstein series?} to define an associated averaged Eisenstein series, which we call hyperbolic, parabolic or elliptic if $m>0$, $m=0$ or $m<0$, respecitvely. This generalized Eisenstein series lives on the Grassmannian of $b^{+}$-dimensional positive definite subspaces of $L \otimes \R$, and it is invariant under the action of $O_d(L)$, which is the subgroup of the special orthogonal group of $L$ fixing the classes of $L$ in its discriminant group $L'/L$.
	Moreover, using the same techniques as above an analog of equation \eqref{eq generalized Eisenstein series?} shows that these generalized Eisenstein series can again be realized as the Borcherds theta lift of Selberg's Poincar\'e series for the lattice $L$ of index $(\beta,m)$.
\end{remark}

	\section{Kronecker limit formulas}\label{section Kronecker limit functions}
	
	We now compute Kronecker limit formulas for the averaged parabolic, hyperbolic and elliptic Eisenstein series given in Theorem \ref{theorem lift 1}. Since $\Gamma(s)$ and $\zeta^{*}(2s)$ have simple poles at $s = 0$, Theorem \ref{theorem lift 1} tells us that the linear coefficient of the Laurent expansion of the averaged Eisenstein series at $s=0$ is essentially given by the value of the regularized theta lift $\Phi(z,P_{\beta,m}(\,\cdot\,,s))$ at $s = 0$. By Proposition \ref{proposition theta lift continuation} we need to compute the regularized theta lift of the harmonic Maass form $P_{\beta,m}(\tau) = P_{\beta,m}(\tau,0)$, which by Theorem \ref{theorem Borcherds products} is given by the absolute value of the logarithm of the Borcherds product $\Psi(z,P_{\beta,m})$ associated to $P_{\beta,m}(\tau)$. In the following, we compute these Borcherds products explicitly.
		
	\subsection{The parabolic case}
	
	We start with the classical, parabolic case. Let $m = \beta = 0$. In Theorem \ref{theorem P_beta,m(tau)} we have seen that
	\[
	P_{0,0}(\tau) = \frac{2}{\sigma_{0}(N)}\sum_{c \mid N}\theta^{w_{c}}(\tau)
	\]
	is a holomorphic modular form of weight $1/2$ for $\rho$. In order to compute the Borcherds product associated to $P_{0,0}$ we first compute the Borcherds product of the unary theta functions $\theta^{w_c}$.
	
	\begin{lemma} \label{lemma Borcherds products theta}
		For $c \mid N$ the Borcherds product associated to $\theta^{w_{c}}$ is given by
		\[
			\Psi(z,\theta^{w_{c}}) = \eta(cz)\eta\left(\tfrac{N}{c}z\right),
		\]
		where $\eta(z) = e(z/24)\prod_{n\geq 1}(1-e(nz))$ is the Dedekind eta function.
	\end{lemma}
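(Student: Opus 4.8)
The plan is to feed the holomorphic input form $f=\theta^{w_c}\in M_{1/2,\rho}$ into the Borcherds product theorem (Theorem \ref{theorem Borcherds products}) and to match the resulting product with the eta quotient. The form $\theta^{w_c}$ has real, non-negative integer Fourier coefficients and no coefficients of index $m<0$, so its principal part reduces to the constant term; consequently the associated Heegner divisor is empty and $\Psi(z,\theta^{w_c})$ is holomorphic and non-vanishing on $\H$, exactly as $\eta(cz)\eta(\tfrac{N}{c}z)$ is.

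First I would compute the relevant coefficients. Writing $\theta=\sum_{\beta}\theta_\beta\e_\beta$, the Atkin--Lehner twist gives $c_{\theta^{w_c}}^+(\gamma,m)=c_\theta^+(w_c(\gamma),m)$, and from the definition of $\theta$ the coefficient $c_\theta^+(\gamma,n^2/4N)$ counts the integers $k\in\{n,-n\}$ with $k\equiv\gamma\pmod{2N}$. A short computation with the congruences \eqref{AtkinLehnerInvolution} shows that $w_c(n)\equiv n\pmod{2N}$ precisely when $c\mid n$, and $w_c(n)\equiv -n\pmod{2N}$ precisely when $(N/c)\mid n$; hence $c_{\theta^{w_c}}^+(n,n^2/4N)$ is the number of these two divisibility conditions that hold, while $c_{\theta^{w_c}}^+(0,0)=c_\theta^+(0,0)=1$ since $w_c(0)=0$. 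In particular the Borcherds product has weight $1$, matching the weight of $\eta(cz)\eta(\tfrac{N}{c}z)$. Splitting the product \eqref{BorcherdsProduct} according to the two conditions and using $\eta(z)=e(z/24)\prod_{m\geq1}(1-e(mz))$ gives
\[
\prod_{n\geq1}\bigl(1-e(nz)\bigr)^{c_{\theta^{w_c}}^+(n,n^2/4N)}=\prod_{m\geq1}\bigl(1-e(cmz)\bigr)\prod_{m\geq1}\bigl(1-e(\tfrac{N}{c}mz)\bigr)=\eta(cz)\eta(\tfrac{N}{c}z)\,e\!\left(-\tfrac{c+N/c}{24}z\right),
\]
so that $\Psi(z,\theta^{w_c})=e(\alpha z)\,\eta(cz)\eta(\tfrac{N}{c}z)$ with $\alpha=\rho_{\theta^{w_c},\infty}-\tfrac{c+N/c}{24}$.

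It remains to prove $\alpha=0$. The cleanest route avoids evaluating the Weyl vector: by Theorem \ref{theorem Borcherds products}(1) the function $\Psi(z,\theta^{w_c})$ transforms under $\Gamma_0(N)$ with weight $1$ and a unitary character, while $\eta(cz)\eta(\tfrac{N}{c}z)$ transforms under $\Gamma_0(N)$ with weight $1$ and a unitary multiplier system; hence their quotient $e(\alpha z)$ transforms with weight $0$ and a unitary multiplier. Since the Weyl vector $\rho_{\theta^{w_c},\infty}=\tfrac{\sqrt N}{8\pi}(\theta^{w_c},\theta)^{\reg}$ is real, $\alpha\in\R$, so that $\lvert e(\alpha z)\rvert=e^{-2\pi\alpha\Im(z)}$ must be $\Gamma_0(N)$-invariant; testing this against $\smat{1&0\\N&1}$, for which $\Im(\gamma z)=\Im(z)/\lvert Nz+1\rvert^2\neq\Im(z)$, forces $\alpha=0$ and yields the claim. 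The one genuine obstacle is hidden in the alternative, direct approach, where one would instead have to evaluate $\rho_{\theta^{w_c},\infty}$ and verify it equals $\tfrac{c+N/c}{24}$; this regularized inner product of unary theta functions is exactly the computation the modularity argument is designed to circumvent.
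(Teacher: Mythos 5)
Your proof is correct, and it diverges from the paper's own proof at exactly one point. The computation of the exponents --- that $c^{+}_{\theta^{w_{c}}}(n,n^{2}/4N)$ counts which of the two conditions $c \mid n$ and $(N/c)\mid n$ hold, so that the infinite product splits as $\prod_{m}(1-e(cmz))\prod_{m}(1-e(\tfrac{N}{c}mz))$ --- is the same as in the paper (formula \eqref{eq Fourier coefficients of theta_sigma_c}, which the paper states without derivation and you derive correctly from \eqref{AtkinLehnerInvolution}). The difference is in how the prefactor $e(\rho_{\theta^{w_{c}},\infty}z)$ is handled: the paper evaluates the Weyl vector directly, citing Example 5.6 of \cite{BruinierSchwagenscheidt} for $(\theta^{w_{c}},\theta)^{\reg}=\tfrac{\pi}{3\sqrt{N}}(c+N/c)$, hence $\rho_{\theta^{w_{c}},\infty}=\tfrac{c+N/c}{24}$, and then simply assembles the product; you instead bypass the regularized inner product entirely, forcing $\alpha=0$ by comparing the unitary-character, weight-$1$ transformation of $\Psi(z,\theta^{w_{c}})$ from Theorem \ref{theorem Borcherds products}(1) with that of $\eta(cz)\eta(\tfrac{N}{c}z)$, so that $\lvert e(\alpha z)\rvert$ must be $\Gamma_{0}(N)$-invariant. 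This is a sound and genuinely different mechanism; in fact you do not even need to invoke reality of the Weyl vector, since invariance of $\lvert e(\alpha z)\rvert$ under $z\mapsto z+1$ already forces $\Im(\alpha)=0$. What the paper's route buys is the explicit value of $(\theta^{w_{c}},\theta)^{\reg}$, which it reuses later (the norms in Lemma \ref{lemma theta_d} and the Weyl vector computation in the elliptic case), so the citation is not wasted there; what your route buys is independence from that external computation --- indeed, run backwards, your argument derives $\rho_{\theta^{w_{c}},\infty}=\tfrac{c+N/c}{24}$ as a corollary. The only ingredient you import instead is the standard fact that $\eta(cz)$, for $c\mid N$, transforms under $\Gamma_{0}(N)$ with weight $1/2$ and a unitary multiplier, which is classical.
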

	
	\begin{proof}
		The Fourier coefficients of the unary theta function $\theta^{w_{c}}$ relevant for its Borcherds product are given by
		\begin{align} \label{eq Fourier coefficients of theta_sigma_c}
			c_{\theta^{w_{c}}}(n,n^{2}/4N) =
			\begin{cases}
				1, & \text{if }n \equiv 0 \mod c \text{ or } n \equiv 0 \mod N/c, \text{ and } n \not \equiv 0 \mod N ,\\
				2, & \text{if }n \equiv 0 \mod N,\\
				0, & \text{else} ,
			\end{cases}
		\end{align}
		for $n \neq 0$. Further, the Weyl vector associated to $\theta^{w_c}$ and the cusp $\infty$ computes to
		\[
			\rho_{\theta^{w_{c}},\infty} = \frac{\sqrt{N}}{8\pi} (\theta^{w_c},\theta^{w_N})^{\reg} = \frac{c+N/c}{24}
		\]
		by Example 5.6 in \cite{BruinierSchwagenscheidt}. Therefore, using Theorem \ref{theorem Borcherds products} we obtain the product expansion
		\[
			\Psi(z,\theta^{w_{c}}) = e((c+N/c)z/24) \prod_{n=1}^\infty \big(1-e(ncz)\big)\prod_{n=1}^\infty \big(1-e\left(n\tfrac{N}{c}z\right)\big) .
		\]
		This proves the claimed formula.
	\end{proof}
	
	\begin{theorem}\label{theorem parabolic KLF}
		At $s=0$ we have the Laurent expansion
		\begin{align*}
			\sum_{p \in C(\Gamma_{0}(N))} \Epar_p(z,s) 
			= 1+\frac{1}{\sigma_{0}(N)}\sum_{c \mid N}\log\left(\left| \Delta(cz)\right|^{1/6}\Im(z)\right)\cdot s + O(s^{2}) .
		\end{align*}
	\end{theorem}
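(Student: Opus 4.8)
The plan is to compute the regularized theta lift $\Phi(z,P_{0,0})$ of the holomorphic modular form $P_{0,0}(\tau) = \frac{2}{\sigma_0(N)}\sum_{c\mid N}\theta^{w_c}$ of Theorem~\ref{theorem P_beta,m(tau)} in two independent ways and to equate the outcomes. On the one hand, Theorem~\ref{theorem lift 1} (case $m=0$) combined with Proposition~\ref{proposition theta lift continuation}(3) packages this lift into the Laurent expansion of the averaged parabolic Eisenstein series at $s=0$; on the other hand, Theorem~\ref{theorem Borcherds products}(4) evaluates it explicitly as (minus four times) the logarithm of the absolute value of a Borcherds product. Matching the constant terms then reads off the linear coefficient of the Eisenstein series, while matching the poles pins down its value at $s=0$.

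First I would handle the Borcherds product side. Since both the product exponents $c_f^{+}(n,n^2/4N)$ in \eqref{BorcherdsProduct} and the Weyl vector $\rho_{f,\infty}$ depend linearly on $f$, the assignment $f \mapsto \Psi(z,f)$ is multiplicative. Hence Lemma~\ref{lemma Borcherds products theta} together with the fact that $c \mapsto N/c$ permutes the divisors of $N$ gives
\[
\Psi(z,P_{0,0}) = \prod_{c\mid N}\left(\eta(cz)\,\eta\!\left(\tfrac{N}{c}z\right)\right)^{2/\sigma_0(N)} = \left(\prod_{c\mid N}\eta(cz)\right)^{4/\sigma_0(N)} .
\]
Inspecting the constant terms of the $\theta^{w_c}$ yields $c_{P_{0,0}}^{+}(0,0)=2$. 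Feeding both facts into Theorem~\ref{theorem Borcherds products}(4) and using $\Delta=\eta^{24}$, I would record the explicit value
\[
\Phi(z,P_{0,0}) = -2\bigl(\log(4\pi N)+\Gamma'(1)\bigr) - \frac{4}{\sigma_0(N)}\sum_{c\mid N}\log\bigl(|\Delta(cz)|^{1/6}\bigr) - 4\log\Im(z).
\]

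Next I would turn to the Eisenstein side. Writing $A(s)=\sum_{p\in C(\Gamma_0(N))}\Epar_p(z,s)=a_0+a_1(z)\,s+O(s^2)$, Theorem~\ref{theorem lift 1} gives $\Phi(z,P_{0,0}(\,\cdot\,,s))=4N^s\zeta^*(2s)A(2s)$, and Proposition~\ref{proposition theta lift continuation}(3) asserts that this function has polar part $-2/s$ at $s=0$ with constant term equal to $\Phi(z,P_{0,0})$. The idea is to expand the three factors $N^s=1+s\log N+O(s^2)$, $A(2s)=a_0+2a_1(z)\,s+O(s^2)$, and $\zeta^*(2s)=\pi^{-s}\Gamma(s)\zeta(2s)$ about $s=0$, the last using $\Gamma(s)=s^{-1}+\Gamma'(1)+O(s)$, $\zeta(0)=-\tfrac12$ and $\zeta'(0)=-\tfrac12\log(2\pi)$. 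Matching the simple pole forces $a_0=1$, and comparing the constant terms with the Borcherds expression above then determines $a_1(z)$.

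The step demanding the most care is this bookkeeping of constants. The expansion of the completed zeta factor produces the combination $\zeta^*(2s)=-\tfrac12 s^{-1}-\log 2-\tfrac12\log\pi-\tfrac12\Gamma'(1)+O(s)$, and only after folding in the factor $4N^s$ together with the contribution from $a_0\log N$ do the scattered logarithms assemble into exactly $-2\bigl(\log(4\pi N)+\Gamma'(1)\bigr)$, reproducing the corresponding additive term on the Borcherds side. This cancellation is the crux; once it occurs, the two additive constants drop out and the comparison reduces to
\[
-4a_1(z) = -\frac{4}{\sigma_0(N)}\sum_{c\mid N}\log\bigl(|\Delta(cz)|^{1/6}\bigr) - 4\log\Im(z).
\]
Distributing the uniform term via $\log\Im(z)=\sigma_0(N)^{-1}\sum_{c\mid N}\log\Im(z)$ across the divisor sum then yields precisely $a_1(z)=\sigma_0(N)^{-1}\sum_{c\mid N}\log\bigl(|\Delta(cz)|^{1/6}\Im(z)\bigr)$, which together with $a_0=1$ is the claimed Kronecker limit formula.
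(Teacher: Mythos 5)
Your proposal is correct and follows essentially the same route as the paper: both compare the two evaluations of $\Phi(z,P_{0,0})$ coming from Proposition \ref{proposition theta lift continuation}(3) together with Theorem \ref{theorem lift 1} on one side and Theorem \ref{theorem Borcherds products}(4) with Lemma \ref{lemma Borcherds products theta} on the other, matching the pole to get the constant term $1$ and the constant terms (after the $\log(4\pi N)+\Gamma'(1)$ cancellation) to identify the linear coefficient. The only difference is cosmetic: you compute the Borcherds product $\Psi(z,P_{0,0})=\prod_{c\mid N}\eta(cz)^{4/\sigma_0(N)}$ up front and expand $\zeta^*(2s)$ directly, whereas the paper first expresses the Laurent expansion in terms of $\log|\Psi(z,P_{0,0})y|$ and evaluates the product at the end.
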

	
	\begin{proof}
		For brevity, we write $E(z,s)$ for the sum of the parabolic Eisenstein series on the left-hand side. By Proposition \ref{proposition theta lift continuation} we have the identity
		\begin{align*}
			\Phi(z,P_{0,0}) = \left( \Phi(z,P_{0,0}(\,\cdot\,,s)) + \frac{2}{s} \right) \bigg|_{s=0}
		\end{align*}
		of regularized theta lifts. Applying Theorem \ref{theorem Borcherds products} with $c_{P_{0,0}}^+(0,0)=2$ on the left-hand side and Theorem \ref{theorem lift 1} on the right-hand side, where we also replace $s$ by $s/2$, we obtain
		\begin{align}\label{ThetaLiftEquationParabolic}
			- 2(\log(4\pi N)+\Gamma'(1)) - 4\log|\Psi(z,P_{0,0})y|
			= \left( 4N^{s/2}\zeta^{*}(s)E(z,s)+\frac{4}{s} \right) \bigg|_{s=0} .
\end{align}
		Using the expansion $\zeta^{*}(s) = -\frac{1}{s} - \frac{1}{2}(\Gamma'(1) + \log(4\pi)) + O(s)$ and the fact that the right-hand side of \eqref{ThetaLiftEquationParabolic} is holomorphic at $s = 0$, we see that the constant term in the Laurent expansion of $E(z,s)$ at $s = 0$ equals $1$. Now a short calculation shows that the right-hand side of \eqref{ThetaLiftEquationParabolic} is given by
		\[
			- 2(\log(4\pi N)+\Gamma'(1)) - 4\,\FT_{s=0} E(z,s) ,
		\]
		where $\FT_{s=0} E(z,s)$ denotes the first term in the Laurent expansion of $E(z,s)$ at $s = 0$, i.e., the coefficient of the linear term. So by \eqref{ThetaLiftEquationParabolic} we find that $E(z,s)$ has the Laurent expansion
		\[
			E(z,s) = 1 + \log|\Psi(z,P_{0,0})y| \cdot s + O(s^2)
		\]
		at $s=0$. In order to determine the remaining Borcherds product, we write $P_{0,0}$ as a linear combination of theta functions $\theta^{w_{c}}$ for $c \mid N$ as in Theorem \ref{theorem P_beta,m(tau)} and apply Lemma \ref{lemma Borcherds products theta}. This yields
		\[
		\Psi(z,P_{0,0}) = \prod_{c \mid N}\left(\eta(cz)\eta\left(\tfrac{N}{c}z\right) \right)^{2/\sigma_{0}(N)} = \prod_{c \mid N}\eta(cz)^{4/\sigma_{0}(N)}.
		\]
		Using $\Delta(z) = \eta(z)^{24}$ and taking everything together we obtain the stated formula.
	\end{proof}
		
	\begin{remark}
		For $N=1$ we recover the classical Kronecker limit formula
		\[
			\Epar_\infty(z,s) = 1 + \log\left(|\Delta(z)|^{1/6}\Im(z)\right) \cdot s + O(s^2) .
		\]
		Moreover, for the extension $\Gamma_0^*(N)$ of $\Gamma_0(N)$ by all Atkin-Lehner involutions, having only one cusp at $\infty$, the sum of all parabolic Eisenstein series for $\Gamma_0(N)$ is actually the unique parabolic Eisenstein series for $\Gamma_0^*(N)$, i.e.,
		\[
			E_\infty^{\parabolic,\Gamma_0^*(N)}(z,s)
			= \sum_{M \in \Gamma_0^*(N)_\infty \backslash \Gamma_0^*(N)} \Im(Mz)^s
			= \sum_{p \in C(\Gamma_{0}(N))} \Epar_p(z,s) .
		\]
		Therefore, the above theorem actually states the Kronecker limit formula for the group $\Gamma_0^*(N)$, given for example in Section 1.5 of \cite{JorgensonSmajlovicThen}.
	\end{remark}
	
	\subsection{The hyperbolic case}
	
	Let $m > 0$ and $\beta \in L'/L$ with $m \in Q(\beta)+\Z$. Recall from Theorem \ref{theorem P_beta,m(tau)} that in this case the Poincar\'e series $P_{\beta,m}(\tau) = P_{\beta,m}(\tau,0)$ is the holomorphic Poincar\'e series of weight $1/2$ for $\rho$, i.e., the unique cusp form satisfying the Petersson coefficient formula \eqref{PeterssonCoefficientFormula}.
	
	We will first construct an orthogonal basis for the space $S_{1/2,\rho}$ consisting of linear combinations of unary theta functions, whose associated Borcherds products were computed in Lemma \ref{lemma Borcherds products theta}. This will then enable us to compute the Borcherds product associated to the Poincar\'e series $P_{\beta,m}(\tau)$.
	
	Note that we can identify the orthogonal group $O(L'/L)$ with the group $D(N)$ given by the set of positive divisors of $N$ together with the operation $c \star d = cd/(c,d)^{2}$ via the isomorphism $c \mapsto w_c$, where $w_c \in O(L'/L)$ is defined by \eqref{AtkinLehnerInvolution}. The characters of $D(N)$ are given by the functions $c \mapsto \mu((c,d))$ for $d \mid N$, where $\mu(n)$ denotes the Moebius function. We let $E(N)$ be the set of all $d \mid N$ for which $\mu(d) = 1$, and for $d \in E(N)$ we define
	\begin{align} \label{definition theta_d}
		\theta_{d} = \sum_{\substack{c \mid N}}\mu((c,d))\theta^{w_{c}} .
	\end{align}
	
	\begin{lemma} \label{lemma theta_d}
		The functions $\theta_d$ for $d \in E(N) \setminus \{1\}$ form an orthogonal basis of $S_{1/2,\rho}$ with norms
		\[
			(\theta_{d},\theta_{d})
			= \frac{2\pi \sigma_{0}(N)\sigma_{1}(N/d)\varphi(d)}{3\sqrt{N}} .
		\]
		Here $\sigma_{k}(N) = \sum_{d \mid N}d^{k}$, and $\varphi$ denotes Euler's totient function.
	\end{lemma}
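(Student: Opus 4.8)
The plan is to exploit the fact that the functions $\theta_d$ are simultaneous eigenforms for the action of the Atkin--Lehner group $O(L'/L) \cong D(N)$, and that the character sum defining them diagonalises the regularised Gram matrix of the $\theta^{w_c}$. First I would record the behaviour of $\theta^{w_c}$ under the group action: from the definition $f^{w} = \sum_\beta f_\beta \e_{w(\beta)}$ one checks $(f^{w})^{w'} = f^{w'w}$, so that $(\theta^{w_c})^{w_{c'}} = \theta^{w_{c \star c'}}$ because $O(L'/L)$ is abelian. Writing $\chi_d(c) = \mu((c,d))$ for the character of $D(N)$ attached to $d \mid N$, a reindexing of the sum in \eqref{definition theta_d} together with $\chi_d(c \star c') = \chi_d(c)\chi_d(c')$ gives $(\theta_d)^{w_{c'}} = \chi_d(c')\,\theta_d$, so $\theta_d$ is an eigenform with eigencharacter $\chi_d$. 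Orthogonality of the $\theta_d$ for distinct $d$ is then immediate: using the self-adjointness $(f^{w_c},g) = (f,g^{w_c})$ and the fact that $\chi_d$ is real-valued, one obtains $(\chi_d(c) - \chi_{d'}(c))(\theta_d,\theta_{d'}) = 0$ for every $c \mid N$, and choosing $c$ with $\chi_d(c) \neq \chi_{d'}(c)$ forces $(\theta_d,\theta_{d'}) = 0$.

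Next I would pin down which $\theta_d$ are nonzero cusp forms. Applying the relation $\theta^{w_c} = \theta^{w_{N/c}}$ and $\chi_d(N/c) = \chi_d(N)\chi_d(c) = \mu(d)\chi_d(c)$ inside \eqref{definition theta_d} yields $\theta_d = \tfrac12(1 + \mu(d))\sum_c \chi_d(c)\theta^{w_c}$, so that $\theta_d = 0$ whenever $\mu(d) = -1$. For $d \in E(N)$ the constant term of $\theta_d$ at $\infty$ is $\sum_{c \mid N}\chi_d(c)$, which vanishes precisely when $\chi_d$ is nontrivial, i.e.\ when $d \neq 1$; hence $\theta_d \in S_{1/2,\rho}$ for $d \in E(N) \setminus \{1\}$, while $\theta_1 = \sum_c \theta^{w_c}$ carries the full constant term and spans the one-dimensional Eisenstein part, being proportional to $P_{0,0}$. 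Since the $\theta_d$ with $d \in E(N)$ are pairwise orthogonal and nonzero, they are linearly independent; as their number equals the number of divisor orbits under $c \sim N/c$, which is $\dim M_{1/2,\rho}$ by Section \ref{section unary theta functions}, they form a basis of $M_{1/2,\rho}$. Discarding the single Eisenstein member $\theta_1$ leaves the asserted basis $\{\theta_d : d \in E(N) \setminus \{1\}\}$ of $S_{1/2,\rho}$.

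Finally, for the norms I would expand by bilinearity of the regularised pairing, $(\theta_d,\theta_d) = \sum_{a,b \mid N}\chi_d(a)\chi_d(b)(\theta^{w_a},\theta^{w_b})^{\reg}$, and feed in the value
\[
(\theta^{w_a},\theta^{w_b})^{\reg} = \frac{\pi}{3\sqrt N}\Bigl(a \star b + \tfrac{N}{a \star b}\Bigr),
\]
which follows from $(\theta^{w_a},\theta^{w_b})^{\reg} = (\theta^{w_{a\star b}},\theta)^{\reg}$ and the Weyl vector computation $\rho_{\theta^{w_c},\infty} = (c + N/c)/24$ used in Lemma \ref{lemma Borcherds products theta}. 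Grouping the double sum by $e = a \star b$ and using $\chi_d(a)\chi_d(b) = \chi_d(e)$ collapses the inner sum to $\sigma_0(N)\chi_d(e)$, giving $(\theta_d,\theta_d) = \frac{\pi\sigma_0(N)}{3\sqrt N}\sum_{e \mid N}\chi_d(e)(e + N/e)$. For $d \in E(N)$ the substitution $e \mapsto N/e$ shows $\sum_e \chi_d(e)(e + N/e) = 2\sum_e \chi_d(e)\,e$, and factoring this multiplicative sum over the primes of $N$ gives $\prod_{p \mid N/d}(1+p)\prod_{p \mid d}(1-p) = \sigma_1(N/d)\varphi(d)$, which assembles into the stated formula.

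I expect the main obstacle to be the norm step. One must justify distributing the regularised inner product over the finite character sum, since the individual entries $(\theta^{w_a},\theta^{w_b})^{\reg}$ genuinely require regularisation even though $(\theta_d,\theta_d)$ converges; one must correctly reduce every Gram entry to the single Weyl-vector value of Lemma \ref{lemma Borcherds products theta} via the Atkin--Lehner symmetry; and one must then carry out the multiplicative bookkeeping that turns the character-twisted divisor sum into $\sigma_1(N/d)\varphi(d)$, where the hypothesis $d \in E(N)$, i.e.\ $\mu(d) = 1$, is exactly what makes the two halves of $e + N/e$ contribute equally.
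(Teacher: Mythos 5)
Your proposal follows the same blueprint as the paper's proof: orthogonality of the $\theta_d$ via the Atkin--Lehner action and character theory, a dimension count to upgrade linear independence to a basis, identification of the cusp forms via the constant term, and a norm computation that reduces every regularized Gram entry to the single value $(\theta^{w_c},\theta)^{\reg}=\frac{\pi}{3\sqrt{N}}\left(c+N/c\right)$ and then evaluates a character-twisted divisor sum; this last manipulation is essentially the paper's computation. There is, however, one unjustified step at a load-bearing point: when you write that ``the $\theta_d$ with $d \in E(N)$ are pairwise orthogonal and nonzero, hence linearly independent,'' you have not established that $\theta_d \neq 0$ for $d \in E(N)\setminus\{1\}$. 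Your observation that $\theta_d = 0$ when $\mu(d)=-1$ says nothing about the case $\mu(d)=1$, and your constant-term computation cannot help, since for $d \neq 1$ the constant term vanishes precisely because these are (candidate) cusp forms. The gap is repairable with material you already have, in either of two ways: (i) run the norm computation \emph{first} --- it uses only bilinearity of the regularized pairing and the Gram entries, not the basis property, so there is no circularity --- and observe that the resulting value $\frac{2\pi\sigma_{0}(N)\sigma_{1}(N/d)\varphi(d)}{3\sqrt{N}}$ is strictly positive, forcing $\theta_d \neq 0$; or (ii) invoke the statement from Section \ref{section unary theta functions} (which you already cite for the dimension) that the $\theta^{w_c}$, for $c \mid N$ modulo $c \sim N/c$, form a basis of $M_{1/2,\rho}$: then $\theta_d$ is a nontrivial linear combination of basis vectors (for $d \in E(N)$ the coefficients $\mu((c,d))=\pm 1$ are constant on orbits), so it is nonzero, and in fact linear independence of all the $\theta_d$ follows at once from invertibility of the character matrix, making the orthogonality detour unnecessary for independence.

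For comparison, the paper closes exactly this point by a different device: it reads off from \eqref{eq Fourier coefficients of theta_sigma_c} that, for each $b \mid N$, the Fourier coefficient of $\theta_d$ at index $(N/b,(N/b)^{2}/4N)$ singles out $\theta_b$ among the $\theta_d$, giving a triangular and hence invertible coefficient matrix, so linear independence comes from Fourier expansions rather than from orthogonality; and it obtains $\dim M_{1/2,\rho} = \tfrac{1}{2}\sigma_{0}(N)$ from the isomorphism with skew-holomorphic Jacobi forms and the Skoruppa--Zagier dimension formula, rather than from the basis statement of Section \ref{section unary theta functions}. Your eigenform formulation of orthogonality, $(\theta_d)^{w_c} = \mu((c,d))\,\theta_d$ with distinct real eigencharacters, is a clean equivalent of the paper's Gram-matrix-plus-character-orthogonality argument, and your reduction $(\theta^{w_a},\theta^{w_b})^{\reg} = (\theta^{w_{a\star b}},\theta)^{\reg}$ via self-adjointness is also how the paper proceeds. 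So apart from the nonvanishing issue the proof is correct and close to the paper's; reorder the steps (or cite the basis property of the $\theta^{w_c}$) and it is complete.
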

	
	\begin{proof}
		Since $(f^{w_c},g) = (f,g^{w_c})$ one easily computes
		\begin{align} \label{eq lemma theta_d 1}
			(\theta_d,\theta_e)
			= \sum_{b,c \mid N} \mu((c,d)) \mu((b,e)) (\theta^{w_c},\theta^{w_e})
			= \sum_{c \mid N} \mu((c,d \star e)) \cdot \sum_{a \mid N} \mu((a,e)) (\theta^{w_a},\theta)
		\end{align}
		for $d,e \in E(N)$. Thus, using the orthogonality relations for characters of finite groups, we see that the functions $\theta_d$ for $d \in E(N)$ are orthogonal. Further, by the formula \eqref{eq Fourier coefficients of theta_sigma_c}, for $b \mid N$ the $(N/b,N^2/b^2 4N)$-Fourier coeffcient of $\theta_d$ is non-zero if and only if $d=b$. Hence, the $\theta_d$ are linearly independent, and since $M_{1/2,\rho}$ is isomorphic to the space $J_{1,N}^{*}$ of skew-holomorphic Jacobi forms of weight $1$ and index $N$ (compare \cite{EichlerZagier}, Theorem 5.7), the dimension formula $\dim(J_{1,N}^{*}) = \frac{1}{2}\sigma_{0}(N)$ from \cite[page 130]{SkoruppaZagier} shows that the $\theta_{d}$ for $d \in E(N)$ form a basis of $M_{1/2,\rho}$. Considering the constant coefficient of $\theta_d$ one easily checks that $\theta_d$ is a cusp form unless $d=1$. Hence the $\theta_d$ for $d \in E(N) \setminus \{1\}$ form an orthogonal basis of $S_{1/2,\rho}$.
		
		It remains to compute their norms. Recall that we have $(\theta^{w_{c}},\theta) = \frac{\pi}{3 \sqrt{N}} (c+N/c)$ for $c \mid N$ by Example 5.6 in \cite{BruinierSchwagenscheidt}. Thus \eqref{eq lemma theta_d 1} yields
		\[
			(\theta_d,\theta_d) = \sigma_0(N) \cdot \frac{2\pi}{3\sqrt{N}} \sum_{c \mid N} \mu((c,d)) c
		\]
		for $d \in E(N)$. Since
		\[
			\sum_{c \mid N} \mu((c,d)) c = \prod_{p \mid d} (1-p) \prod_{p \mid \frac{N}{d}} (1+p) = \mu(d)\sigma_{1}(N/d) \varphi(d)
		\]
		and $\mu(d) = 1$, we also obtain the stated formula for the norms.
	\end{proof}
	
	We are now ready to state a general Kronecker limit type formula for the averaged sum of hyperbolic Eisenstein series given in Theorem \ref{theorem lift 1}.

	\begin{theorem} \label{theorem hyperbolic KLF}
		Let $m>0$ and $\beta \in L'/L$ with $m \in Q(\beta)+\Z$. At $s=0$ we have the Laurent expansion
		\begin{align*}
			\sum_{X \in\Gamma_{0}(N) \backslash L_{\beta,m}}\Ehyp_{c_{X}}(z,s)
			= \sum_{d \in E(N) \setminus \{1\}}C_{\beta,m}(d) \log \Bigg( \prod_{c \mid N}|\eta(cz)|^{\mu((c,d))} \Bigg) \cdot s + O(s^{2}) ,
		\end{align*}
		where $E(N)$ is the set of all positive divisors of $N$ having an even number of prime factors, and the constants $C_{\beta,m}(d)$ are given as follows:
		\begin{enumerate}
			\item If $4Nm$ is not a square, or if $4Nm=n^2$ is a square with $N/(n,N)$ being $1$ or a prime, then the sum on the right-hand side vanishes, i.e., the sum of hyperbolic Eisenstein series has a double root at $s=0$.
			\item If $4Nm = n^2$ with $n \in \Z_{>0}$, and such that $N/(n,N)$ has at least two different prime factors, then the constants $C_{\beta,m}(d)$ are given by
			\begin{align*}
				C_{\beta,m}(d) = \begin{dcases}
					\frac{ 24n \, \mu((f,d)) }{ \sigma_{0}(N/(n,N)) \sigma_{1}(N/d) \varphi(d) } , & \text{if $(n,d)=1$} , \\ 
					0, & \text{if $(n,d)>1$} .
				 \end{dcases}
			\end{align*}
			Here we chose $f \mid N$ such that $w_{f}(n) = \beta$. In particular, the sum on the right-hand side of the above Laurent expansion does not vanish in this case.
		\end{enumerate}
	\end{theorem}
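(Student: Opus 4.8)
The plan is to run the cusp form $P_{\beta,m} = P_{\beta,m}(\,\cdot\,,0)$ through the chain provided by Theorem \ref{theorem lift 1}, Proposition \ref{proposition theta lift continuation} and Theorem \ref{theorem Borcherds products}, and then to evaluate the resulting Borcherds product by decomposing $P_{\beta,m}$ in the orthogonal basis of Lemma \ref{lemma theta_d}. Write $E(z,s) = \sum_{X \in \Gamma_0(N) \backslash L_{\beta,m}} \Ehyp_{c_X}(z,s)$. Replacing $s$ by $s/2$ in Theorem \ref{theorem lift 1} gives $\Phi(z,P_{\beta,m}(\,\cdot\,,s/2)) = \frac{2\Gamma(s/2)}{(4\pi m)^{s/2}} E(z,s)$. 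By part (2) of Proposition \ref{proposition theta lift continuation} the left-hand side is holomorphic at $s = 0$ with value $\Phi(z,P_{\beta,m})$, whereas $\Gamma(s/2) = 2/s + O(1)$. Since $\Gamma(s/2)$ has a simple pole, $E(z,s)$ must vanish at $s = 0$ (this is the known vanishing of the hyperbolic Eisenstein series), so writing $E(z,s) = \calK(z)\, s + O(s^2)$ and comparing Laurent expansions yields $\calK(z) = \tfrac14 \Phi(z,P_{\beta,m})$. As $P_{\beta,m}$ is a cusp form we have $c_{P_{\beta,m}}^+(0,0) = 0$, so part (4) of Theorem \ref{theorem Borcherds products} collapses to $\calK(z) = -\log\abs{\Psi(z,P_{\beta,m})}$.

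Next I would expand $P_{\beta,m} = \sum_{d \in E(N) \setminus \{1\}} \lambda_d \theta_d$ in the orthogonal basis of Lemma \ref{lemma theta_d}, with $\lambda_d = (P_{\beta,m},\theta_d)/(\theta_d,\theta_d)$. The Petersson coefficient formula \eqref{PeterssonCoefficientFormula} (together with the reality of the Fourier coefficients of both $\theta_d$ and $P_{\beta,m}$) gives $(P_{\beta,m},\theta_d) = -8\pi\sqrt{m}\, c_{\theta_d}(\beta,m)$, and Lemma \ref{lemma theta_d} supplies the norms $(\theta_d,\theta_d)$. Since the theta lift $\Phi(z,\,\cdot\,)$ is linear, $\Phi(z,P_{\beta,m}) = \sum_d \lambda_d \Phi(z,\theta_d)$. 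Each $\theta_d$ with $d \neq 1$ is a cusp form, so upon summing $\theta_d = \sum_{c \mid N} \mu((c,d)) \theta^{w_c}$ the constant-term and $\Im(z)$ contributions in part (4) of Theorem \ref{theorem Borcherds products} cancel, and Lemma \ref{lemma Borcherds products theta} gives $\Phi(z,\theta_d) = -4 \sum_{c \mid N} \mu((c,d)) \log\abs{\eta(cz)\eta(\tfrac{N}{c}z)}$. Because $N$ is squarefree and $\mu(d) = 1$ one has $(c,d)(N/c,d) = d$ and hence $\mu((c,d)) = \mu((N/c,d))$, so the two eta-factors recombine into $2\log\prod_{c \mid N} \abs{\eta(cz)}^{\mu((c,d))}$. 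Collecting constants I arrive at the asserted shape $\calK(z) = \sum_d C_{\beta,m}(d) \log\prod_{c \mid N} \abs{\eta(cz)}^{\mu((c,d))}$ with $C_{\beta,m}(d) = 16\pi\sqrt{m}\, c_{\theta_d}(\beta,m)/(\theta_d,\theta_d)$.

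Everything then reduces to the Fourier coefficient $c_{\theta_d}(\beta,m) = \sum_{c \mid N} \mu((c,d))\, c_\theta(w_c(\beta),m)$, and this computation is the main obstacle. Since unary theta functions are supported on indices with $4Nm$ a perfect square, $c_{\theta_d}(\beta,m) = 0$ whenever $4Nm$ is not a square; then $c_f(\beta,m) = 0$ for every $f \in S_{1/2,\rho}$, whence $P_{\beta,m} = 0$ and $\calK \equiv 0$, which is the first alternative of case (1). When $4Nm = n^2$ I would evaluate $c_\theta(w_c(\beta),m)$ via \eqref{eq Fourier coefficients of theta_sigma_c} and the Atkin-Lehner relations \eqref{AtkinLehnerInvolution}, counting which of $\pm n$ lie in the class $w_c(\beta)$. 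The expected outcome is that $c_{\theta_d}(\beta,m)$ vanishes unless $(n,d) = 1$, in which case it equals $2\,\sigma_0((n,N))\,\mu((f,d))$ with $f \mid N$ determined by $w_f(n) = \beta$. A divisor $d \in E(N) \setminus \{1\}$ with $(n,d) = 1$ must divide $N/(n,N)$ and have at least two prime factors, so such a $d$ exists if and only if $N/(n,N)$ has at least two different prime factors; this disposes of the second alternative of case (1), where again $P_{\beta,m} = 0$. In case (2) I would substitute $\sqrt{mN} = n/2$, the norm formula from Lemma \ref{lemma theta_d}, and $\sigma_0(N) = \sigma_0((n,N))\,\sigma_0(N/(n,N))$ into the expression for $C_{\beta,m}(d)$ to obtain the stated constant. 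The delicate point throughout is precisely this coefficient bookkeeping: correctly counting the representations $\pm n \equiv w_c(\beta) \ (2N)$ under the Atkin-Lehner twists and collapsing the M\"obius-weighted sum into the clean factor $\sigma_0((n,N))\,\mu((f,d))$.
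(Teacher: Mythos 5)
Your proposal follows essentially the same route as the paper: the chain Theorem \ref{theorem lift 1} $\to$ Proposition \ref{proposition theta lift continuation} $\to$ Theorem \ref{theorem Borcherds products} to get $\calK(z) = \tfrac14\Phi(z,P_{\beta,m}) = -\log|\Psi(z,P_{\beta,m})|$, then the decomposition of $P_{\beta,m}$ in the orthogonal basis $\{\theta_d\}$ of Lemma \ref{lemma theta_d}, Lemma \ref{lemma Borcherds products theta} for the eta quotients, and the Petersson coefficient formula \eqref{PeterssonCoefficientFormula} plus M\"obius bookkeeping for the constants. The only structural difference is cosmetic: the paper computes $(P_{\beta,m},\theta_d)$ by moving the Atkin--Lehner involution onto the Poincar\'e series, writing $P_{\beta,m} = P_{n,n^2/4N}^{w_f}$ and $\theta_d^{w_f} = \mu((f,d))\,\theta_d$, and then pairing against the individual $\theta^{w_c}$, whereas you compute the Fourier coefficient $c_{\theta_d}(\beta,m) = \sum_{c\mid N}\mu((c,d))\,c_\theta(w_c(\beta),m)$ directly by counting which of $\pm n$ lie in the class $w_c(\beta)$; these are the same computation, and your anticipated outcome $c_{\theta_d}(\beta,m) = 2\,\sigma_0((n,N))\,\mu((f,d))$ for $(n,d)=1$, zero otherwise, is exactly what the paper obtains. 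Your constants, normalizations, and case analysis all check out, including the observation that in both alternatives of case (1) one in fact has $P_{\beta,m}=0$.

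There is one genuine omission: the last assertion of item (2), that the sum on the right-hand side \emph{does not vanish}, does not follow merely from the nonvanishing of some coefficient $C_{\beta,m}(d)$. A nonzero linear combination of the functions $\log\bigl(\prod_{c\mid N}|\eta(cz)|^{\mu((c,d))}\bigr)$ could a priori be identically zero, so one must additionally show that these functions, for $d \in E(N)\setminus\{1\}$, are linearly independent. The paper handles this by noting that an eta quotient is determined by its exponents: the exponent vectors $\bigl(\mu((c,d))\bigr)_{c \mid N}$ are distinct characters of the group $D(N)$, hence linearly independent, and the functions $\log|\eta(cz)|$, $c \mid N$, are themselves independent (e.g.\ by comparing $q$-expansions). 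Without this step your argument proves the Laurent expansion with the stated constants, but not the concluding nonvanishing claim; the same independence is also what upgrades item (1) from ``all coefficients vanish'' to the clean equivalence between vanishing of the Kronecker limit function and vanishing of all $C_{\beta,m}(d)$.
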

	
	\begin{proof}
		Combining Theorem \ref{theorem lift 1}, Proposition \ref{proposition theta lift continuation} and Theorem \ref{theorem Borcherds products} we obtain that the Laurent expansion of the left-hand side at $s=0$ is given by
		\[
			\sum_{X \in\Gamma_{0}(N) \backslash L_{\beta,m}}\Ehyp_{c_{X}}(z,s)
			= \frac{1}{4} \Phi(z,P_{\beta,m}) \cdot s + O(s^2)
			= - \log \abs{ \Psi(z,P_{\beta,m}) } \cdot s + O(s^2) .
		\]
		In order to compute the Borcherds product $\Psi(z,P_{\beta,m})$, we write $P_{\beta,m}$ in terms of the orthogonal basis given in Lemma \ref{lemma theta_d}, and use Lemma \ref{lemma Borcherds products theta} to obtain
		\begin{align} \label{eq Borcherds product of P for m>0}
			- \log \abs{ \Psi(z,P_{\beta,m}) } = -2 \sum_{d \in E(N) \setminus \{1\}} \frac{(P_{\beta,m},\theta_d)}{(\theta_d,\theta_d)} \log \Bigg( \prod_{c \mid N} |\eta(cz)|^{\mu((c,d))} \Bigg) .
		\end{align}
		If $4Nm$ is not a square we have $(P_{\beta,m},\theta_d)=0$ for all $d \in E(N) \setminus \{1\}$ by the coefficient formula \eqref{PeterssonCoefficientFormula} since the Fourier coefficients of the theta functions $\theta_d$ are supported on indices of the form $n^{2}/4N$ for $n \in \Z$.
		
		Let now $4Nm=n^2$ with $n \in \Z_{>0}$. Then we find $f \mid N$ such that $w_{f}(n) = \beta$ as $\beta^2 \equiv n^2$ mod $2N$. Moreover, we have $P_{\beta,m} = P_{n,n^{2}/4N}^{w_{f}}$ and $\theta_{d}^{w_{f}} = \theta_{d\star f} = \mu((f,d))\theta_{d}$. Using again formula \eqref{PeterssonCoefficientFormula} we thus find
		\begin{align*}
			(P_{\beta,m},\theta_{d})
			&= \mu((f,d))\sum_{c \mid N}\mu((c,d))\left(P_{n,n^{2}/4N},\theta^{w_{c}} \right) \\
			&= - \frac{4\pi n}{\sqrt{N}} \, \mu((f,d)) \sum_{c \mid N} \mu((c,d)) c_{\theta^{w_c}}(n,n^{2}/4N) .
		\end{align*}
		The latter Fourier coefficients are given in \eqref{eq Fourier coefficients of theta_sigma_c}, yielding
		\begin{align*}
			\sum_{c \mid N} \mu((c,d)) c_{\theta^{w_c}}(n,n^{2}/4N)
			&= 2 \sum_{c \mid (n,N)} \mu((c,d))
			= 2 \sigma_0((n,N/d)) \sum_{c \mid (n,d)} \mu((c,d)) .
		\end{align*}
		Now the remaining sum over the values of the character $c \mapsto \mu((c,d))$ of the group $D((n,d))$ is $1$ if $(n,d)=1$, and vanishes otherwise. Note that in the first case, i.e., if $(n,d)=1$, then $\sigma_0((n,N/d)) = \sigma_0((n,N))$. Therefore, the formula in item (2) follows if we take into account the norm $(\theta_{d},\theta_{d})$ computed in Lemma \ref{lemma theta_d}. Moreover, we see that the coefficients $C_{\beta,m}(d)$ can only vanish simultaneously if $(n,d)>1$ for all $d \in E(N) \setminus \{1\}$, i.e., if $N/(n,N)$ is $1$ or a prime, proving the second part of item (1).
		
		Finally, we note that since an eta quotient is uniquely determined by its exponents, one can check that the eta quotients $\prod_{c \mid N} |\eta(cz)|^{\mu((c,d))}$ with $d \in E(N) \setminus \{1\}$ appearing in \eqref{eq Borcherds product of P for m>0} are linearly independent (in the multiplicative sense). Hence the first term in the Laurent expansion of the averaged hyperbolic Eisenstein series indeed vanishes if and only if $C_{\beta,m}(d)=0$ for all $d \in E(N) \setminus \{1\}$.
	\end{proof}
	
	\begin{remark} \label{remark P=0}
		The proof of the previous theorem actually shows that for $m>0$ the Poincar\'e series $P_{\beta,m}$ vanishes exactly if $4Nm$ is not a square, i.e., if the geodesics $c_X$ with $X \in L_{\beta,m}$ are closed in the modular curve $\Gamma_0(N) \backslash \H$, or if $4Nm=n^2$ is a square with $N/(n,N)$ being $1$ or a prime. In the following we explain the vanishing of the Poincar\'e series in the latter case:
		
		If $N=1$ or $N=p$ is a prime we trivially have $P_{\beta,m}=0$ as $S_{1/2,\rho} = \{0\}$ by Lemma \ref{lemma theta_d}. We claim that if $(n,N)>1$, the Poincar\'e series $P_{\beta,m}$ of level $N$ can actually be written as a certain lift of a Poincar\'e series of lower level $N/q$ with $q=(n,N)$. In order to distinguish lattices, we write $L_{N}$ for the lattice of level $N$, and $L_{N/q}$ for the lattice of level $N/q$. Similarly, we will put superscripts at some places to emphasize the underlying lattice. We now define an operator
		\[
			V_q \colon M_{1/2,\rho^{N/q}} \to M_{1/2,\rho^{N}}
		\]
		acting on the Fourier expansion of $f = \sum_{\gamma \in L_{N/q}'/L_{N/q}} \sum_{n \geq 0} c_f(\gamma,n) e(n\tau) \e_\gamma \in M_{1/2,\rho^{N/q}}$ via
		\begin{align*}
			f | V_q = \sum_{\gamma \in L_{N}'/L_{N}} \sum_{\substack{n \in \Z+Q(\gamma) \\ n \geq 0}}
				\Bigg( \sum_{a \mid (n-Q(\gamma),\gamma,q)} c_f(\gamma/a,n/a^2) \Bigg) e(n\tau) \e_\gamma .
		\end{align*}
		Here we identify $L_N'/L_N$ with $\Z/2N\Z$, as usual. This operator is a direct translation of the corresponding $V_q$ operator acting on skew-holomorphic Jacobi forms of weight $1$ and index $N/q$ (compare \cite{EichlerZagier}, \S 4). One can check that
		\begin{align} \label{Vq lift and w_c}
			\big( f | V_q \big)^{w_c^{N}} = f^{w_c^{N/q}} | V_q
		\end{align}
		for $f \in M_{1/2,\rho^{N/q}}$ and $c \mid \frac{N}{q}$. Furthermore, comparing Fourier expansions we find
		\begin{align} \label{Vq lift and thetaN}
			\theta^{N/q} | V_q = \sum_{a \mid q} \big( \theta^{N} \big)^{w_a^{N}} ,
		\end{align}
		and combining \eqref{Vq lift and w_c} and \eqref{Vq lift and thetaN} we obtain $\theta_d^{N/q} | V_q = \theta_d^{N}$ for $d \in E(N/q)$. Conversely, computing inner products with $\theta_e^{N/q}$ for $e \in E(N/q)$ we get
		\begin{align} \label{Vq* and theta}
			\theta_d^{N} | V_q^* = \begin{cases}
				\frac{\sigma_0(q) \sigma_1(q)}{\sqrt{q}} \theta_d^{N/q} , & \text{if $(d,q)=1$} , \\
				0 , & \text{if $(d,q)>1$} .
			\end{cases}
		\end{align}
		for $d \in E(N)$. Here $V_q^*$ denotes the adjoint operator of $V_q$. Finally, we can use \eqref{Vq* and theta} and the fact that $(\beta,N) = (n,N)=q$ to prove
		\begin{align*}
			P_{\beta,n^2/4N}^{N} = \frac{q}{\sigma_1(q)} \ P_{\beta/q,(n/q)^2/(4N/q)}^{N/q} | V_q .
		\end{align*}
		Thus, if $N/q$ is $1$ or a prime, the Poincar\'e series on the right-hand side vanishes identically since the corresponding space of cusp forms is trivial, and hence the Poincar\'e series on the left-hand side vanishes as well.
	\end{remark}
	
	\begin{remark}
		 It is an interesting problem to investigate the second order coefficient of the Laurent expansion of the hyperbolic Eisenstein series at $s = 0$. In \cite{JorgensonKramerPippich2009}, Theorem 4.1, the authors proved that for a closed geodesic $c$ the Petersson inner product of $\Ehyp_{c}(z,s)$ against a Maass form $\psi$ with eigenvalue $\lambda = \frac{1}{4}+r^{2}$ is given by
		 \begin{align*}
			(\Ehyp_{c}(z,s),\psi) = \sqrt{\pi} \, \frac{\Gamma((s-1/2+ir)/2)\Gamma((s-1/2-ir)/2)}{\Gamma^{2}(s/2)}\int_{c}\psi(z)ds_{\text{hyp}}(z).
		 \end{align*}		 
		 Using similar arguments one can show an analogous formula for the hyperbolic Eisenstein series associated to an infinite geodesic $c$ if $\psi$ is a cusp form. For $\lambda \neq 0$, the right-hand side has at least a double root at $s = 0$, so the second order coefficient of the Laurent expansion of $\Ehyp_{c}(z,s)$ at $s = 0$ represents the functional
		 \[
		 \psi \mapsto \int_{c}\psi(z) ds_{\text{hyp}}(z)
		 \]
		 on Maass cusp forms with eigenvalue $\lambda$. We made some progress on its explicit construction using a theta lift in signature $(2,2)$. The details will be discussed in a subsequent work.
	\end{remark}
	
	\subsection{The elliptic case}
	
	Let $m < 0$ and $\beta \in L'/L$ with $m \in Q(\beta) + \Z$. Theorem \ref{theorem P_beta,m(tau)} tells us that the Poincar\'e series $P_{\beta,m}$ is the unique harmonic Maass form of weight $1/2$ for $\rho$ which is orthogonal to cusp forms and has principal part $e(mz)(\e_{\beta}+\e_{-\beta})$. For general squarefree $N$, the function $P_{\beta,m}$ is a proper harmonic Maass form, i.e., not weakly holomorphic, and the associated Borcherds product can only be described in terms of its roots and poles on $\H$ and at the cusps. Note that the Fourier coefficients of the holomorphic part of $P_{\beta,m}$ are real by Lemma \ref{lemma P(tau,0)}, so the corresponding Borcherds product is well-defined.
	
	\begin{theorem}\label{theorem elliptic KLF}
		Let $m<0$ and $\beta \in L'/L$ with $m \in Q(\beta)+\Z$. At $s=0$ we have the Laurent expansion
		\[
			\sum_{X \in \Gamma_{0}(N) \backslash L_{\beta,m}} \Eell_{z_{X}}(z,s) = -\log\left|\Psi_{\beta,m}(z)\right|\cdot s + O(s^{2}),
		\]
		where $\Psi_{\beta,m} \colon \H \to \C$ is a weakly holomorphic modular form of weight $0$, level $\Gamma_0(N)$ and some unitary character. Further, $\Psi_{\beta,m}$ is uniquely determined by the following properties:
		\begin{enumerate}
			\item The roots of $\Psi_{\beta,m}$ in $\H$ are located at the Heegner points $z_{X}$ for $X \in L_{\beta,m}$, with order $2$ if $\beta \equiv - \beta$ mod $L$ and order $1$ otherwise. 
			\item The order of $\Psi_{\beta,m}$ at the cusp $1/c$ for $c \mid N$ is given by
			\[
				\ord_{1/c}(\Psi_{\beta,m}) = -\frac{c}{N} \cdot \frac{H_{N}(\beta,m)}{\sigma_{0}(N)}
			\]
			where $\sigma_{0}(N) = \sum_{c \mid N}1$, and
			\[
				H_{N}(\beta,m) = \sum_{X \in \Gamma_{0}(N)\backslash L_{\beta,m}} \frac{2}{|\Gamma_0(N)_X|}
			\]
			is a Hurwitz class number.
			\item The leading coefficient in the Fourier expansion of $\Psi_{\beta,m}$ at $\infty$ is $1$.
		\end{enumerate}
	\end{theorem}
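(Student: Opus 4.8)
The plan is to carry out the three-step strategy from the introduction in the case $m<0$. First I would reduce the Laurent expansion to the theta lift of $P_{\beta,m}=P_{\beta,m}(\,\cdot\,,0)$. Since the elliptic Eisenstein series vanish at $s=0$, the averaged sum has no constant term; writing $\sum_{X}\Eell_{z_X}(z,w)=\calK(z)\,w+O(w^2)$ near $w=0$, the identity of Theorem \ref{theorem lift 1} for $m<0$ gives $\frac{2\Gamma(s)}{(4\pi|m|)^s}\sum_{X}\Eell_{z_X}(z,2s)=4\calK(z)+O(s)$, because the simple pole $1/s$ of $\Gamma(s)$ cancels the simple zero of the averaged series. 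By Proposition \ref{proposition theta lift continuation}(2) the left-hand side at $s=0$ equals $\Phi(z,P_{\beta,m})$, so $\calK(z)=\tfrac14\Phi(z,P_{\beta,m})$. Next, since $c^+_{P_{\beta,m}}(0,0)=0$ (established in the proof of Theorem \ref{theorem P_beta,m(tau)}), part (4) of Theorem \ref{theorem Borcherds products} yields $\Phi(z,P_{\beta,m})=-4\log|\Psi(z,P_{\beta,m})|$. Setting $\Psi_{\beta,m}:=\Psi(z,P_{\beta,m})$ then gives the claimed expansion $-\log|\Psi_{\beta,m}|\cdot s+O(s^2)$; the product is well defined because the relevant coefficients $c^+_{P_{\beta,m}}(n,n^2/4N)$ are real by Lemma \ref{lemma P(tau,0)}.

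It then remains to read off properties (1)--(3) from Theorem \ref{theorem Borcherds products} and to prove uniqueness. The weight is $c^+_{P_{\beta,m}}(0,0)=0$, and since the principal part contributes only the nonnegative Heegner divisor, $\Psi_{\beta,m}$ is holomorphic on $\H$, hence weakly holomorphic of weight $0$ with a unitary (possibly infinite-order) character. For property (1) I would feed the principal part $(\e_\beta+\e_{-\beta})e(m\tau)$ into the Heegner divisor of part (2): the two terms contribute $\tfrac12\big(\sum_{X\in L_{\beta,m}}(z_X)+\sum_{X\in L_{-\beta,m}}(z_X)\big)$, and since $z_{-X}=z_X$ the reindexing $X\mapsto -X$ identifies the two sums. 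If $\beta\not\equiv-\beta$ the factor $\tfrac12$ leaves order $1$ at each $z_X$, whereas if $\beta\equiv-\beta$ both $X$ and $-X$ lie in $L_{\beta,m}$ and are counted separately, doubling the order to $2$. Property (3) is immediate from the shape $\Psi(z,f)=e(\rho_{f,\infty}z)\prod_{n\geq1}(1-e(nz))^{c^+(n,n^2/4N)}$, whose leading term as $\Im(z)\to\infty$ is $e(\rho_{f,\infty}z)$ with coefficient $1$.

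The heart of the proof is property (2), the orders at the cusps, obtained from part (3) of Theorem \ref{theorem Borcherds products} as $\ord_{1/c}(\Psi_{\beta,m})=\tfrac{c}{N}\rho_{P_{\beta,m},1/c}=\tfrac{c}{N}\tfrac{\sqrt N}{8\pi}(P_{\beta,m},\theta^{w_c})^{\reg}$. I would first simplify the regularized inner product. Since $P_{\beta,m}$ is orthogonal to cusp forms by Theorem \ref{theorem P_beta,m(tau)}(3), and the orthogonal basis of Lemma \ref{lemma theta_d} spans $S_{1/2,\rho}$ together with the single Eisenstein-type vector $\theta_1=\sum_{c'\mid N}\theta^{w_{c'}}$, only the $\theta_1$-component of $\theta^{w_c}$ survives. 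A short computation with the inner products $(\theta^{w_a},\theta)=\tfrac{\pi}{3\sqrt N}(a+N/a)$ recalled in the proof of Lemma \ref{lemma theta_d} shows that this component equals $\tfrac{1}{\sigma_0(N)}\theta_1$ independently of $c$, and using $\theta_1=\tfrac{\sigma_0(N)}{2}P_{0,0}$ from Theorem \ref{theorem P_beta,m(tau)}(2) the whole computation reduces to the single quantity $(P_{\beta,m},P_{0,0})^{\reg}$. This explains the $c$-independence of the cusp orders, consistent with the Atkin--Lehner invariance of $H_N(\beta,m)$, and reduces the claim to the normalization $(P_{\beta,m},P_{0,0})^{\reg}=-\tfrac{16\pi}{\sqrt N}\,H_N(\beta,m)/\sigma_0(N)$.

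Computing this last regularized inner product is where I expect the main difficulty to lie. Naive unfolding fails: because $m<0$ the seed $e(m\tau)$ grows exponentially at the cusp, so unfolding $P_{0,0}=E_0(\,\cdot\,,0)$ against $P_{\beta,m}$ produces a regularized strip integral whose $u$-integration forces a vanishing index match and spuriously returns $0$. The correct value must instead be extracted from a genuinely regularized Rankin--Selberg computation over the fundamental domain, in which the non-holomorphic part $\sum_{n<0}\tilde b(\gamma,n)\Gamma(1/2,4\pi|n|v)e(n\tau)\e_\gamma$ of $P_{\beta,m}$ from Lemma \ref{lemma P(tau,0)} contributes a boundary term as $v\to\infty$. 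Equivalently, and perhaps more transparently, one can compute the constant term of the theta-kernel side of $\Phi(z,P_{\beta,m})$ at the cusp $1/c$ directly, as in the proofs of Borcherds and Bruinier--Ono; there the Hurwitz class number $H_N(\beta,m)$ appears as the degree of the Heegner divisor attached to the principal part via a Siegel--Weil type identity counting the vectors of $L_{\beta,m}$. Once this normalization is in hand, property (2) follows. Finally, uniqueness is standard: any two weakly holomorphic weight-$0$ forms with unitary character sharing the divisor (1)--(2) have quotient a weight-$0$ modular function with trivial divisor and unitary character, hence a constant, which the normalization (3) pins to $1$.
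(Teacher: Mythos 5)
Your reduction of the Laurent expansion to $\tfrac14\Phi(z,P_{\beta,m})=-\log|\Psi(z,P_{\beta,m})|$, your treatment of properties (1) and (3), and the uniqueness argument are all correct and follow the same route as the paper. Your observation that naive unfolding of $(P_{0,0},P_{\beta,m}(\,\cdot\,,s))^{\reg}$ spuriously returns $0$ is also a genuinely good one: it explains why the orthogonality-to-cusp-forms argument cannot simply be extended to $\theta^{w_c}$, and your reduction of property (2) to the single constant $(P_{\beta,m},P_{0,0})^{\reg}=-\tfrac{16\pi}{\sqrt N}\,H_N(\beta,m)/\sigma_0(N)$ is arithmetically consistent with the claimed cusp orders.

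However, that last identity is exactly the heart of property (2), and you never prove it; you only gesture at "a genuinely regularized Rankin--Selberg computation with a boundary term" or "a Siegel--Weil type identity as in Borcherds and Bruinier--Ono", neither of which is carried out, and neither of which by itself produces the Hurwitz class number with the correct normalization. The paper closes this gap with a specific mechanism: it invokes Zagier's non-holomorphic Eisenstein series $E_{3/2}$ of weight $3/2$ for the dual Weil representation $\bar\rho$ (in the level-$N$ form of Bruinier--Funke), which has three crucial properties: its holomorphic part has Fourier coefficients $c^+_{E_{3/2}}(\gamma,n)=H_N(\gamma,-n)$, it is orthogonal to cusp forms, and $\xi_{3/2}E_{3/2}=-\tfrac{\sqrt N}{4\pi}\sum_{d\mid N}\theta^{w_d}$. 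Writing $\rho_{P_{\beta,m},1/c}=-\tfrac{1}{2\sigma_0(N)}(P_{\beta,m},\xi_{3/2}E_{3/2})^{\reg}$ and applying the Stokes-theorem pairing of \cite{BruinierFunke04}, Proposition 3.5, converts this into $\tfrac{1}{2\sigma_0(N)}(E_{3/2},\xi_{1/2}P_{\beta,m})^{\reg}$ minus the finite pairing of the principal part of $P_{\beta,m}$ with the holomorphic coefficients of $E_{3/2}$; the first term dies because $\xi_{1/2}P_{\beta,m}$ is a cusp form and $E_{3/2}$ is orthogonal to cusp forms, and the second term is precisely $-H_N(\beta,m)/\sigma_0(N)$. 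Your "boundary term as $v\to\infty$" intuition is pointing at this Stokes argument, but without identifying a $\xi_{3/2}$-preimage of $\sum_d\theta^{w_d}$ whose holomorphic coefficients are the class numbers $H_N(\gamma,n)$, the value of the regularized inner product --- and hence the entire formula for $\ord_{1/c}(\Psi_{\beta,m})$ --- remains an assertion rather than a theorem. This is the one genuine gap in your proposal.
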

	
	\begin{proof}
		As in the proof of Theorem \ref{theorem hyperbolic KLF}, a combination of Theorem \ref{theorem lift 1}, Proposition \ref{proposition theta lift continuation} and Theorem \ref{theorem Borcherds products} yields that the Laurent expansion of the averaged sum of elliptic Eisenstein series at $s=0$ is of the form
		\[
			\sum_{X \in\Gamma_{0}(N)\backslash L_{\beta,m}}\Eell_{z_{X}}(z,s)
			= - \log \abs{\Psi_{\beta,m}} \cdot s + O(s^2)
		\]
		with $\Psi_{\beta,m}(z) = \Psi(z,P_{\beta,m})$ a meromorphic modular form of weight $c_{P_{\beta,m}}(0,0)=0$, level $\Gamma_0(N)$ and some unitary character. Its product expansion implies property (3). Moreover, as the principal part of $P_{\beta,m}$ is given by $e(m\tau) (\e_\beta+\e_{-\beta})$ we have
		\[
			\frac{1}{2} \sum_{\gamma \in L'/L} \sum_{\substack{n \in \Z+Q(\gamma) \\ n<0}} c_{P_{\beta,m}}^+(\gamma,n) \sum_{X \in L_{\gamma,n}} (z_X)
			= \frac{1}{2} \sum_{X \in L_{\beta,m}} \left[ (z_X) - (z_{-X}) \right]
			= \sum_{X \in L_{\beta,m}} (z_X) .
		\]
		Thus part (2) of Theorem \ref{theorem Borcherds products} yields property (1). In order to determine the order of $\Psi_{\beta,m}$ at the cusp $1/c$ using part (3) of Theorem \ref{theorem Borcherds products} we need to compute the Weyl vector associated to $P_{\beta,m}$ and the cusp $1/c$, i.e.,
		\[
			\rho_{P_{\beta,m},1/c} = \frac{\sqrt{N}}{8\pi} (P_{\beta,m},\theta^{w_c})^{\reg} .
		\]
		Since $\theta^{w_{c}} - \frac{1}{\sigma_{0}(N)}\sum_{d \mid N}\theta^{w_{d}}$ is a cusp form, and $P_{\beta,m}$ is orthogonal to cusp forms by Theorem \ref{theorem P_beta,m(tau)}, we can write
		\begin{align} \label{eq rho_P,1/c = sum of inner products of theta_d}
			\rho_{P_{\beta,m},1/c} = \frac{\sqrt{N}}{8\pi \sigma_{0}(N)}\bigg(P_{\beta,m},\sum_{d \mid N} \theta^{w_{d}}\bigg)^{\reg}.
		\end{align}
		In order to compute the latter inner product, we introduce Zagier's non-holomorphic Eisenstein series $E_{3/2}$ of weight $3/2$, which was first studied by Zagier for level $N = 1$ in \cite{ZagierEisensteinSeries}, and later generalized to arbitrary $N$ by Bruinier and Funke, see \cite{BruinierFunke06}, Remark 4.6 (i). It is a harmonic Maass form of weight $3/2$ for the dual representation $\bar{\rho}$ with holomorphic part
		\begin{align*}
			E_{3/2}^{+}(\tau) = \sum_{\gamma \in L'/L} \sum_{\substack{n \in \Z+Q(\gamma) \\ n \geq 0}} H_N(\gamma,-n) e(n\tau) \e_\gamma ,
		\end{align*}
		where $H_N(\gamma,-n)$ is the Hurwitz class number, which is defined in the theorem for $n<0$, and for $n=0$ we set $H_{N}(0,0) = -\sigma_1(N)/6$. Furthermore, the Eisenstein series $E_{3/2}$ is orthogonal to cusp forms with respect to the regularized inner product, and its image under the differential operator $\xi_{3/2}$ is given by
		\begin{align} \label{EisensteinXiImage}
			\xi_{3/2} E_{3/2}(\tau) = -\frac{\sqrt{N}}{4\pi} \sum_{d \mid N} \theta^{w_d}(\tau) ,
		\end{align}
		which can be checked using the Fourier expansion of $E_{3/2}$ given in \cite{BruinierFunke06}. Therefore, by equation \eqref{eq rho_P,1/c = sum of inner products of theta_d} and Stokes' theorem (applied as in \cite{BruinierFunke04}, Proposition 3.5) we find
		\begin{align*}
			\rho_{P_{\beta,m},1/c}
			&= - \frac{1}{2 \sigma_0(N)} (P_{\beta,m},\xi_{3/2}E_{3/2})^{\reg} \\
			&= \frac{1}{2 \sigma_0(N)} (E_{3/2},\xi_{1/2}P_{\beta,m})^{\reg}
				- \frac{1}{2 \sigma_0(N)} \sum_{\gamma \in L'/L} \sum_{\substack{n \in \Z+Q(\gamma) \\ n \leq 0}} c^{+}_{P_{\beta,m}}(\gamma,n) c^{+}_{E_{3/2}}(\gamma,-n) .
		\end{align*}
		The remaining inner product vanishes since $\xi_{1/2}P_{\beta,m}$ is a cusp form by Theorem \ref{theorem P_beta,m(tau)}, and as the principal part of $P_{\beta,m}$ is given by $e(m\tau) (\e_\beta+\e_{-\beta})$, we are left with
		\[
			\rho_{P_{\beta,m},1/c}
			= - \frac{1}{2 \sigma_0(N)} \left( c^{+}_{E_{3/2}}(\beta,-m) + c^{+}_{E_{3/2}}(-\beta,-m) \right)
			= -\frac{H_{N}(\beta,m)}{\sigma_{0}(N)} .
		\]
		Here we also used that $H_{N}(\beta,m) = H_{N}(-\beta,m)$. This concludes the proof.
	\end{proof}
	
	\begin{remark}
		We emphasize that the character of the Borcherds product $\Psi_{\beta,m}$ may have infinite order. Since $P_{\beta,m}$ is orthogonal to cusp forms, Theorem 6.2 and the subsequent remark from \cite{BruinierOno} show that the character has finite order if and only if the Fourier coefficients $c_{P_{\beta,m}}^+(n,n^2/4N)$ for $n \geq 1$ are all rational. However, since these coefficients are expected to be transcendental if $P_{\beta,m}$ is not weakly holomorphic, the character will typically have infinite order.
	\end{remark}
	
	\begin{remark}
		Let $N \in \Z_{>0}$ be such that the generalized Fricke group $\Gamma_0^*(N)$ has genus $0$. Then there is a normalized Hauptmodul $j_{N}^{*} = e(-z) + O(e(z))$ for $\Gamma_0^*(N)$, and the explicit formula given in Theorem \ref{theorem elliptic KLF introduction} in the introduction follows by noting that the function
		\[
			\prod_{X \in \Gamma_{0}(N) \backslash L_{\beta,m}} \left( \big(j_N^*(z)-j_N^*(z_X)\big)^{2 / |\Gamma_0(N)_X|} \right)^{1/\sigma_0(N)}
		\]
		is a weakly holomorphic modular form of weight $0$ and level $\Gamma_0(N)$, which satisfies the properties (1)--(3) of Theorem \ref{theorem elliptic KLF}, and hence agrees with $\Psi_{\beta,m}$.
	\end{remark}


\bibliography{bibliography}
\bibliographystyle{alpha}

\end{document}